\theoremstyle{plain}
\newtheorem{thm}{Theorem}[section]
\newaliascnt{cor}{thm}
\newaliascnt{prop}{thm}
\newaliascnt{lem}{thm}
\newtheorem{cor}[cor]{Corollary}
\newtheorem{prop}[prop]{Proposition}
\newtheorem{lem}[lem]{Lemma}
\theoremstyle{definition}
\newaliascnt{defn}{thm}
\newaliascnt{asu}{thm}
\newaliascnt{con}{thm}
\newcounter{stp}
\newcounter{stpi}
\newcounter{stpci}
\newcounter{stpiii}
\theoremstyle{thm}
\newaliascnt{rem}{thm}
\newaliascnt{exa}{thm}
\newaliascnt{masu}{thm}
\newaliascnt{nota}{thm}
\newaliascnt{sett}{thm}
\newtheorem{rem}[rem]{Remark}
\numberwithin{equation}{section}
\setlist[enumerate]{font = \normalfont}
\newcommand {\N}	{\mathrm{N}}
\newcommand {\Z}	{\mathbb{Z}}
\newcommand {\R}	{\mathbb{R}}
\newcommand {\C}	{\mathbb{C}}
\newcommand {\E}	{\mathbb{E}}
\newcommand {\T}	{\mathbb{T}}
\renewcommand{\d}{\, \mathrm{d}}
\DeclareMathOperator{\im}{Im}
\DeclareMathOperator{\divH}{div_{\H}}
\newcommand{\Hinfty}{\mathcal{H}^\infty}
\renewcommand{\Re}{\mathrm{Re}}
\renewcommand{\Im}{\mathrm{Im}}
\newcommand{\D}{\mathrm{D}}
\renewcommand{\H}{\mathrm{H}}
\newcommand{\per}{\mathrm{per}}
\newcommand{\sigmabar}{\bar{\sigma}}
\newcommand{\sssigmabar}[1]{{#1}_{\sigmabar}}
	\newcommand{\dk}[1]{\partial_{#1}}
	\newcommand{\dt}{\dk{t}} 
	\newcommand{\dz}{\dk{z}} 
	\renewcommand{\phi}{\varphi}
	\renewcommand{\bar}[1]{\overline{#1}}
	\newcommand{\vbar}{\bar{v}}
	\newcommand{\fbar}{\bar{f}}
	\newcommand{\vtilde}{\tilde{v}}
	\renewcommand{\div}{\mathrm{div} \, }
	\newcommand{\nablaH}{\nabla_{\H}}
	\newcommand{\DeltaH}{\Delta_{\H}}
    \newcommand{\Cof}{\mathrm{Cof}}
	\newcommand{\rC}{\mathrm{C}}
	\newcommand{\rL}{\mathrm{L}}
	\newcommand{\rW}{\mathrm{W}}
	\newcommand{\rH}{\H}
	\newcommand{\rB}{\mathrm{B}}
	\newcommand{\rLsigmabar}{\sssigmabar{\rL}}
	\newcommand{\rLq}{\rL^q}
	\newcommand{\rLp}{\rL^p}
	\newcommand{\rLqsigmabar}{\rLsigmabar^q}
	\newcommand{\rLqOmegasigmabar}{\rLqsigmabar(\Omega)}
	\newcommand{\rX}{\mathrm X}
\newcommand{\cA}{\mathcal{A}}
\title[Compressible primitive equations]{The Lagrangian approach to the compressible primitive equations}
\author{Matthias Hieber}
\address{Technische Universit\"at Darmstadt\\
	Fachbereich Mathematik\\
	Schlossgartenstr.\ 7\\
	64289 Darmstadt\\
	Germany}
\email{hieber@mathematik.tu-darmstadt.de}
\author{Yoshiki Iida}
\address{Department of Pure and Applied Mathematics \\
	Graduate School of Fundamental Science and Engineering \\
	Waseda University \\
	169-8555 Tokyo \\
	Japan}
\email{yoshiki-i737@asagi.waseda.jp }
\author{Arnab Roy}
\address{Basque Center for Applied Mathematics (BCAM), Alameda de Mazarredo 14, 48009 Bilbao, Spain.}
	\address{IKERBASQUE, Basque Foundation for Science, Plaza Euskadi 5, 48009 Bilbao, Bizkaia, Spain.}
\email{aroy@bcamath.org}
\author{Tarek Z\"{o}chling}
\address{Technische Universit\"at Darmstadt\\
	Fachbereich Mathematik\\
	Schlossgartenstr.\ 7\\
	64289 Darmstadt\\
	Germany}
\email{zoechling@mathematik.tu-darmstadt.de}
\begin{document}

\begin{abstract}
This article develops the hydrostatic Lagrangian approach to the compressible primitive equations. A fundamental aspect in the analysis is the investigation of the compressible hydrostatic Lam\'{e} and Stokes operators. Local strong well-posedness for large data and global strong well-posedness for small data are established under various assumptions on the pressure law, both in the presence and absence of gravity.
\end{abstract}
\maketitle
\section{Introduction}
\label{sec:intro}
The primitive equations of geophysical fluid dynamics are regarded as a standard model for oceanic and atmospheric dynamics and are derived from the 
Navier-Stokes equations by assuming a hydrostatic balance for the pressure term. They have been introduced in a series of papers  by Lions, Temam and Wang ~\cite{LTW:92a, LTW:92b, LTW:95}.

Very roughly speaking, the model developed by Lions, Temam and Wang couples the compressible primitive equations describing the dynamics of the  atmosphere and 
the incompressible primitive equations for the dynamics of the ocean through a  nonlinear  interface condition. In  the analysis of the 
compressible equation for the atmosphere they introduced a new coordinate system, the pressure coordinate system, in which, assuming the pressure of the atmosphere at the interface to be constant,
one essentially  obtains an incompressible primitive equation. This assumption is, however, not physical and hence  not true in realistic situations.      
In their pioneering work Lions, Temam and Wang proved the existence of a weak solution to the primitive equations and also to the coupled system; its uniqueness
remains an open problem for  both systems until today.

Since then the compressible and  incompressible primitive equations have been the subject of intensive  mathematical investigations. Indeed, a celebrated result of Cao ant Titi \cite{CT:07}
asserts that the incompressible primitive equations subject to Neumann boundary conditions are globally strongly well-posed in the three-dimensional setting  
for arbitrarily  large data belonging to $\rH^1$.  For related results concerning different approaches, different boundary conditions and also critical spaces, we refer to  \cite{KZ:07a,HK:16,GGHHK:20b}.  

The situation for the compressible primitive equations is much less understood. Given a cylindrical domain $\Omega =G \times (0,1)$, where $G =(0,1)^2$ the compressible primitive equations are 
given by the set of equations
\begin{equation}
	\left\{
	\begin{aligned}
        \dt \varrho +\div  (\varrho u)  &=0, &&\text{ on }\Omega \times (0,T), \\
		\varrho \left (\dt  v +u \cdot \nabla v \right )  - \mu \Delta v- \mu' \nablaH \divH v +  \nablaH p &= 0, &&\text{ on } \Omega\times (0,T),  \\
		  \dz p &= - g\varrho  , &&\text{ on } \Omega \times (0,T), \\ %\frac{1}{p} , &&\text{ on } \Omega \times (0,T),\\
p &=  \varrho^\gamma, &&\text{ on } \Omega \times (0,T),  \\
v(0)&= v_0, \quad \varrho(0)=\varrho_0 \\
\end{aligned}
\right. 
\label{eq:compressibleprimtivegravity}
\end{equation}
for some $\gamma \geq 1$. Here $u = (v, w) : \Omega \rightarrow \R^3$ denotes the velocity, $p : \Omega \rightarrow \R$ the atmospheric pressure and $\varrho : \Omega \rightarrow \R$ the 
atmospheric density. Here we use $\nablaH$, $\divH$ for the horizontal gradient and the horizontal divergence, that is $\nablaH:=(\partial_x,\partial_y)^T$ and
$\divH:=\nablaH \cdot{}$. The Lam\'e coefficients $\mu$ and $\mu'$ are assumed to satisfy the standard conditions $\mu>0$ and  $\mu+ \mu'>0$.  

The above system  is supplemented with the boundary conditions
\begin{equation}\label{eq:bc}
\begin{aligned} 
    v|_{G \times \{1 \}} = 0, \quad (\partial_z v)|_{G \times \{0 \}} = 0, \quad w|_{G \times \{ 0\} \cup G \times \{ 1 \}} = 0 \ \text{ and } \ u,p,\varrho \ \text{ are periodic on }\ \partial G \times [0,1] .
\end{aligned} 
\end{equation}

The case $\gamma=1$ is of particular interest. Indeed, the law of an ideal gas tells us that $p=R \varrho T$, where $T$ denotes the temperature of the atmosphere and $R$ the gas constant.
This gives us in the isothermal situation  the relation  $p=c \varrho$ for some constant $c$. The articles  of Lions, Temam and Wang \cite{LTW:92a}, \cite{LTW:92b} also treat this
situation. For both classical and recent results on the compressible Navier-Stokes equations, we refer to the works of Lions \cite{PLL98} and Feireisl \cite{EF01} for weak solutions, and those of Matsumura and Nishida \cite{MN:83} and Enomoto and Shibata \cite{ES:18} for strong solutions. In particular, for the case of the exponent $\gamma=1$, we refer to the work of Hoff \cite{Hof:02} and Danchin and Mucha \cite{DM:23}.

The existence of global weak solutions for the compressible system with degenerate viscosities without gravity goes back to the work of Liu
and Titi \cite{LT:weak}. Ersoy, Ngom and Sy also constructed a global weak solution to some two dimensional version of compressible primitive equations in \cite{ENS:11} and \cite{EN:12}. 
Local strong well-posedness in  the three dimensional setting with $\gamma>1$ was established by Liu and Titi \cite{LT:21} in two cases: with gravity but without vacuum and with vacuum but without gravity.    
Furthermore, they showed in \cite{LT:20} a global, strong well-posedness result for the compressible primitive equations for data close to an incompressible flow in the situation without gravity.
Note that all these results are based on energy methods.

Inspired by the Lagrangian approach to the compressible Navier-Stokes equations, see e.g., \cite{Dan:14,Dan:18}, we introduce  here for the first time the {\em hydrostatic Lagrangian approach} to the
compressible primitive equations.
It differs from the usual Lagrangian approach since we define here the flow along the vertical average $\overline{v}$ of the horizontal velocity $v$, i.\ e., the flow $\rX$ is defined as the solution of the
equation 
\begin{equation}
\begin{aligned}
        \begin{cases}
            \dt \mathrm{X}(t,y_\H) &= \vbar (t,\mathrm{X}(t,y_\H)), \\
            \mathrm{X}(0,y_\H) &= y_\H,
        \end{cases}
    \end{aligned}
\end{equation}
where $y_\H$ denotes the horizontal coordinate.  

This hydrostatic Lagrangian approach allows us to prove first local, strong well-posedness of \eqref{eq:compressibleprimtivegravity} subject to \eqref{eq:bc} including gravity and general values of $\gamma$ for initial data $\varrho_0 \in \rH^{1,q}(\Omega)$ and $v_0 \in \rB^{2(1-1/p)}_{qp}(\Omega;\R^2)$ subject to suitable compatibility conditions and $p,q >2$.
Moreover, for the case $\gamma=1$ we obtain a global strong well-posedness result for small data even in the situation of gravity.

Some words about our approach are in order. The hydrostatic Lagrangian transformation leads us to a new set of equations, where the linearized part is given by the
{\em compressible,  hydrostatic Stokes operator} $A_{\mathrm{CHS}}$, see \autoref{sec:global}. We show that $-A_{\mathrm{CHS}}$ admits a bounded $\mathcal{H}^\infty$-calculus on $\rH_\per^{1,q}(G;\R)\times \rLq(\Omega;\R^2)$ of an $\mathcal{H}^\infty$-angle strictly
less than $\pi/2$. This allows us to perform a  maximal regularity approach to equation \eqref{eq:compressibleprimtivegravity} implying
our  local and global existence results even in the case of gravity. Here we use the fact that the compressible hydrostatic Stokes operator is invertible when restricted to the space of
mean value free functions and that the maximal regularity estimates can be shown in this case to be valid on the time interval $[0,\infty)$.    

The boundedness of the $\mathcal{H}^\infty$-calculus of $-A_{\mathrm{CHS}}$ on $\rH_\per^{1,q}(G;\R)\times \rLq(\Omega;\R^2)$ is based on perturbation theory of the {\em hydrostatic Lam\'e operator} $-A_{\mathrm{HL}}$ defined and
investigated  in \autoref{sec:lintheo}. Indeed, we prove in \autoref{sec:lintheo} that $-A_{\mathrm{HL}}$ is parameter elliptic on the cylindrical domain $\Omega$. Combining  the $\mathcal{H}^\infty$-result for parameter elliptic, cylindrical boundary value problems given in \cite{N:12} with the perturbation result given in \cite{HH:05}, we conclude that $-A_{\mathrm{HL}}$ admits a bounded $\mathcal{H}^\infty$-calculus on $\rLq(\Omega;\R^2)$ of angle strictly less than $\pi/2$.

This article is structured as follows. In \autoref{sec:prelim} we present after some preliminaries the  three main results of this article. The hydrostatic Lagrangian transform is introduced and investigated
then in \autoref{subsec:lagrangecor}. In \autoref{sec:lintheo} we introduce and study the hydrostatic Lam\'e operator which leads in \autoref{sec:localwp} to our local, strong well-posedness results. The proof of our
global strong well-posedness results for small data to  the compressible primitive equations is given in \autoref{sec:global} in various settings.

\section{Preliminaries and Main Results}
\label{sec:prelim}
Consider a cylindrical domain $\Omega =G \times (0,1)$, where $G =(0,1)^2$. For an integrable function $f \colon \Omega  \rightarrow \R^n$, with $n \in \{1,2\}$, we define the
vertical average of $f$ by $\fbar \coloneqq \int_0^1 f(\cdot, \cdot, \xi) \d \xi$.
\newpage 

\leftline{\em 1. The case $\gamma=1$ with gravity}

We start with the case $\gamma=1$ meaning that $p = c \varrho$ for some $c>0$. The pressure law is then given by the law of  ideal gas, which reads as  $p = RT\varrho$, where $T$ denotes temperature and
$R>0$ the gas constant. In the isothermal situation we obtain the relation $p=c \varrho$. We  consider the set of equations 
\begin{equation}
	\left\{
	\begin{aligned}
        \dt \varrho +\div  (\varrho u)  &=0, &&\text{ on }\Omega \times (0,T), \\
		\varrho \left (\dt  v +u \cdot \nabla v \right )  - \mu \Delta v- \mu' \nablaH \divH v +  \nablaH p &= 0, &&\text{ on } \Omega\times (0,T),  \\
		  \dz p &= - g\varrho  , &&\text{ on } \Omega \times (0,T), \\ %\frac{1}{p} , &&\text{ on } \Omega \times (0,T),\\
p &=  c\varrho, &&\text{ on } \Omega \times (0,T),  \\
v(0)&= v_0, \quad \varrho(0)=\varrho_0 \\
\end{aligned}
\right. 
\label{eq:cpeprel}
\end{equation}
for some $c>0$. The system is supplemented by  the boundary conditions given in \eqref{eq:bc}.

For simplicity of the notation we set $c=g=1$ and  deduce then from \eqref{eq:cpeprel}$_3$ and \eqref{eq:cpeprel}$_4$ that 
\begin{equation}\label{eq:density}
    p(t,x,y,z) = \varrho(t,x,y,z) = \xi(t,x,y) \mathrm{e}^{-z},
\end{equation}
where $\xi$ denotes the pressure evaluated at the surface $G \times \{ 0\}$. In the following, we use the explicit vertical dependence of the density to obtain a two dimensional version of the
continuity equation by averaging in vertical direction. To this end, we consider the transformation 
\begin{equation}\label{eq:ztransform}
    z' = \frac{1-\mathrm{e}^{-z}}{\delta} \iff z = \log\big (\frac{1}{1-\delta z'}\big ) \quad \text{ for }\delta := 1- \mathrm{e}^{-1}>0,
\end{equation}
and introduce the new variables 
\begin{equation*}
    \widehat{v}(t,x,y,z') := v(t,x,y, z(z'))\quad \text{as well as} \quad \widehat{w}(t,x,y,z') := w(t,x,y, z(z')).
\end{equation*}
Note that the surface pressure $\xi$ remains unchanged and \eqref{eq:density} implies that $p = \varrho = \xi \cdot (1-\delta z')$.
Hence, naming the new variables  as before and averaging the continuity equation in vertical direction, we find the transformed system
\begin{equation}
	\left\{
	\begin{aligned}
       \dt \xi+ \vbar \cdot \nablaH\xi + \xi \divH  \vbar  &=0, &&\text{ on }\Omega \times (0,T), \\
       \xi \left ( \dt v + \bar{v} \cdot  \nablaH v \right ) -\frac{\mu\DeltaH v  }{1-\delta z} -\mu \dz \big ( \frac{1-\delta z}{\delta^2} \dz v \big ) -\frac{ \mu' \nablaH \divH v }
              {1-\delta z}&= F(\xi, v), &&\text{ on } \Omega \times (0,T),  \\
		  \dz \xi  &= 0  , &&\text{ on } \Omega \times (0,T), \\ %\frac{1}{p} , &&\text{ on } \Omega \times (0,T),\\
\end{aligned}
	\right. 
	\label{eq:compressibleprimitivegravity2}
\end{equation}
where $F$ is given by
\begin{equation*}
     F(\xi,v) = - \xi \big((v-\bar{v}) \cdot \nablaH v + \frac{(1-\delta z)}{\delta}w \partial_{z}v \big) - \nablaH \xi.
\end{equation*}
Note that the transform \eqref{eq:ztransform} leaves the boundary conditions \eqref{eq:bc} unchanged. 

As for the incompressible primitive equations, where the vertical velocity  is determined by the divergence free  condition, see  e.\ g. \cite{HK:16}, we find that $w$ is determined by the
horizontal velocity $v$ and the density at the surface $\xi$. More precisely, 
comparing \eqref{eq:compressibleprimtivegravity}$_1$ and \eqref{eq:compressibleprimitivegravity2}$_1$ yields
\begin{equation}
    \label{eq:w}
    \xi w = - \int_0^z \divH (\xi \vtilde) \d z',
\end{equation} 
where $\vtilde = v -\vbar$. Finally, we prescribe the initial data as
\begin{equation*}
    (\xi(0), v(0)) = (\xi_0,v_0).
  \end{equation*}

\vspace{.2cm}  
\leftline{\em 2. The case $\gamma=2$ with gravity}  
We consider the system \eqref{eq:compressibleprimtivegravity} for the case  $p = \varrho^2$ and set  $g=1$.
In that case the relevant quantity for the density is still given by its value at the surface $\xi$. However, the representation of $\varrho$ by $\xi$ differs from the one given
in \eqref{eq:density}. For $\gamma=2$ we obtain
$\varrho = \xi +\frac{1}{2}z$ and after averaging the system reads as 
    \begin{equation}
	\left\{
	\begin{aligned}
        \dt \xi + \vbar \cdot \nablaH\xi + \xi \divH  \vbar+ \frac{1}{2}\overline{z \divH v} &=0, &&\text{ on }G \times (0,T), \\
		\big ( \xi + \frac{1}{2}z \big )\big (\dt  v+ u \cdot \nabla v \big ) -  \mu\Delta v - \mu'\nablaH \divH v  + \left ( 2 \xi + z \right )\nablaH \xi   &=0, &&\text{ on } \Omega \times (0,T),  \\
		  \dz \xi   &= 0  , &&\text{ on } \Omega \times (0,T). \\
	\end{aligned}
	\right. 
	\label{eq:compressiblegamma}
      \end{equation}
      The system is again supplemented by  the boundary conditions given in \eqref{eq:bc}. Here we can also recover the vertical velocity $w$ by
      comparing \eqref{eq:compressibleprimtivegravity}$_1$ and \eqref{eq:compressiblegamma}$_1$ which leads to
      \begin{equation*}
          \xi w = - \int_0^z \divH(\xi \vtilde) + \frac{1}{2} \widetilde{z \divH v} \d z'.
      \end{equation*}
Finally, we prescribe the initial data as 
\begin{equation*}
    (\xi(0), v(0)) = (\xi_0,v_0).
\end{equation*}

\vspace{.2cm}
\leftline{\em 3. The case of general pressure law  without  gravity}  
We finally  investigate the system \eqref{eq:compressibleprimtivegravity} without gravity but with a generalized pressure law of the form 
\begin{equation}\label{eq:pressurelaw}
    p = P(\varrho), \ \text{ where }P \in C^\infty \ \text{ with }\ c_1 \leq  P'(s) \leq c_2 \mbox{ for all } s \in \Im \ \varrho,
\end{equation}
where $c_1$ and $c_2$ are positive constants.
%Concerning the compressible Navier-Stokes equation with generalized pressure law we refer to \cite{SE:18}.
The corresponding system reads then as
\begin{equation}
	\left\{
	\begin{aligned}
        \dt \varrho +\div  (\varrho u)  &=0, &&\text{ on }\Omega \times (0,T), \\
		\varrho \left (\dt  v +u \cdot \nabla v \right )  - \mu \Delta v- \mu' \nablaH \divH v +  \nablaH p &= 0, &&\text{ on } \Omega\times (0,T),  \\
		  \dz p &= 0 , &&\text{ on } \Omega \times (0,T), \\ %\frac{1}{p} , &&\text{ on } \Omega \times (0,T),\\
                      p &= P(\varrho), &&\text{ on } \Omega \times (0,T),
                \end{aligned}
	\right. 
	\label{eq:compressibleprimtivegeneralpressure}
\end{equation}
supplemented by the boundary conditions \eqref{eq:bc}.

We deduce from \eqref{eq:compressibleprimtivegeneralpressure}$_3$ as well as \eqref{eq:compressibleprimtivegeneralpressure}$_4$  that 
$P'(\varrho) \cdot \dz \varrho =0$. The general pressure law \eqref{eq:pressurelaw} then yields $\dz \varrho =0$. In particular, the density is independent of the vertical variable and
therefore it is sufficient to consider its value at the surface $\xi(t,x,y) \coloneqq \varrho(t,x,y,0) $. After averaging the continuity equation of
\eqref{eq:compressibleprimtivegeneralpressure}, taking into account the boundary conditions for $w$, the above system reads as 
\begin{equation}
	\left\{
	\begin{aligned}
        \dt \xi +\divH  (\xi \vbar)  &=0, &&\text{ on }G \times (0,T), \\
		\xi \left (\dt  v +u \cdot \nabla v \right )  - \mu \Delta v- \mu' \nablaH \divH v +  P'(\xi) \nablaH \xi &= 0, &&\text{ on } \Omega\times (0,T),  \\
		  \dz \xi &= 0 , &&\text{ on } \Omega \times (0,T). \\ %\frac{1}{p} , &&\text{ on } \Omega \times (0,T),\\
	\end{aligned}
	\right. 
	\label{eq:compressibleprimtivegeneralpressure2}
\end{equation}
Here, the vertical velocity can be deduced as in \eqref{eq:w}. The system \eqref{eq:compressibleprimtivegeneralpressure2} is supplemented by the boundary conditions \eqref{eq:bc} and by the initial conditions 
\begin{equation*}
    (\xi(0), v(0))  = (\xi_0,v_0).
\end{equation*}

\begin{rem}\label{rem:systemeq}{\rm 
Let us note that the transformed systems \eqref{eq:compressibleprimitivegravity2} as well as \eqref{eq:compressiblegamma} are equivalent to the original system \eqref{eq:compressibleprimtivegravity},
where the  pressure and density are related  by the formula $p=\varrho^\gamma$ for $\gamma \in \{1,2\}$. In the case of $\gamma=1$, the transform \eqref{eq:ztransform} is invertible for all times and the
density can be recovered from \eqref{eq:density}. In the situation of $\gamma=2$, we can recover the density from $\varrho = \xi + \frac{1}{2}z$ and in the case of no gravity it even holds true that $\varrho = \xi$. In all cases regularity and positivity properties
transfer directly from $\xi$ to $\varrho$ as $z \in (0,1)$.

}
\end{rem}

At this point, some words about our approach for solving \eqref{eq:compressibleprimitivegravity2}, \eqref{eq:compressiblegamma} and \eqref{eq:compressibleprimtivegeneralpressure} are in order.
Let $s \in \R$ and $p,q \in (1,\infty)$. In the sequel, we
denote by $\rW^{s,q}(\Omega)$ the fractional Sobolev spaces, by $\rH^{s,q}(\Omega)$ the Bessel potential spaces and  by $\rB^{s}_{qp}(\Omega)$ the Besov spaces, where $\Omega \subset \R^3$ is an open.
As usual, we set $\rH^{0,q}(\Omega) := \rL^q(\Omega)$ and note that $\rH^{s,q}(\Omega)$ coincides with the 
classical Sobolev space $\rW^{m,q}(\Omega)$ provided  $s=m \in \N$. For more information on function spaces we refer e.g. to \cite{Ama:19} and \cite{Tri:78}. 

In the following, we need terminology to describe periodic boundary conditions on $\Gamma_l = \partial G \times [0,1]$ as well as on $\partial G$. Given $s \in [0,\infty)$ and $p,q \in (1,\infty)$ we define the spaces
\begin{equation*}
    \rH^{s,q}_\per (\Omega) := \bar{\rC^\infty_\per (\bar{\Omega})}^{\| \cdot \|_{\rH^{s,q}(\Omega)}} \ \text{ and } \ \rH^{s,q}_\per (G) := \bar{\rC^\infty_\per (\bar{G})}^{\| \cdot \|_{\rH^{s,q}(G)}},
\end{equation*}
where horizontal periodicity is modeled by the function spaces $\rC^\infty_\per(\bar{\Omega})$ and $\rC^\infty_\per(\bar{G})$ defined in \cite{HK:16}. Of course, we interpret $\rH^{0,q}_\per$ as $\rLq$. The Besov spaces $\rB^s_{qp,\per}(\Omega)$ and $\rB^s_{qp,\per}(G)$ are defined in a similar manner. For more information regarding spaces equipped with periodic boundary conditions, see also \cite{HK:16, GGHHK:20b, GGHHK:17}.
Concerning the solution and data spaces we first define the spaces
\begin{equation*}
\begin{aligned}
\rX_0 &:= \rH_\per^{1,q}(G;\R) \times \rLq(\Omega;\R^2) \quad \mbox{and} \\
\rX_1 &:= \rH_\per^{1,q}(G; \R) \times  \mathrm{Y}, \mbox{ where }  \mathrm{Y}:=  \{ v \in \rH_\per^{2,q} (\Omega;\R^2) \colon \ v = 0 \text{ on } \Gamma_u \ \text{ and } \ \dz v = 0 \text{ on }\Gamma_b  \},
\end{aligned}  
\end{equation*}
where we $\Gamma_u = G \times \{1\}$ and $\Gamma_b=G\times\{0\}$.
%$\rX_0 \coloneqq \rH^{1,q}(G,\R) \times \rLq(\Omega,\R^2)$ as well as
%$\rX_1 \coloneqq \rH^{1,q}(G, \R) \times Y$, where $Y:=  \{ v \in \rH^{2,q} (\Omega) \colon \ v = 0 \text{ on } \Gamma_u \ \text{ and } \ \dz v = 0 \text{ on }\Gamma_b  \}$.
Then given $0< T \leq \infty$, we define the data space $\E_{0,T}$ and the solution space $\E_{1,T}$ by  
\begin{equation*}
    \begin{aligned}
        \E_{0,T} \coloneqq \rLp(0,T;\rX_0) %= \rLp(0,T;\rH^{1,q}(G)) \times \rLp(0,T;\rLq(\Omega)) 
        \quad \text{ and } \quad  
        \E_{1,T} \coloneqq \rH^{1,p}(0,T;\rX_0) \cap \rLp(0,T;\rX_1).
    \end{aligned}
\end{equation*}
Moreover, we introduce the solution and data spaces concerning only the velocity by
\begin{equation*}
    \begin{aligned}
        \E^v_{0,T} \coloneqq \rLp(0,T;\rLq(\Omega;\R^2)), \quad \E^v_{1,T } = \rH^{1,p}(0,T; \rLq(\Omega,\R^2)) \cap \rLp(0,T; \mathrm{Y}).
    \end{aligned}
\end{equation*}

We now in the position to formulate our  main theorems concerning the local well-posedness of \eqref{eq:compressibleprimitivegravity2} and \eqref{eq:compressiblegamma} as well as the
global well-posedness for small data of \eqref{eq:compressibleprimitivegravity2}, even  in the presence of gravity. Furthermore, in the absence of gravity, i.\ e., $g=0$, we also obtain local and global
well-posedness of \eqref{eq:compressibleprimtivegeneralpressure}, where the pressure and density are related  by a  monotone increasing function.

Our requirements on the set of initial data are collected in the following assumption.

\vspace{.2cm}
\noindent
%\begin{asu}\label{assu:data}
{\bf Assumption (A)}: Let $p,q \in (2,\infty)$ such that $1 \neq \frac{1}{p} + \frac{1}{2q}$ and  $\frac{1}{2} \neq \frac{1}{p} + \frac{1}{2q}$ and assume that 
 \begin{enumerate}[(i)]
        \item $\xi_0 \in \rH_\per^{1,q}(G;\R)$ satisfies $M_1 \leq \xi_0 \leq M_2$ for  some constants $M_1, M_2 >0$  and all $(x,y) \in G$.  
        \item $v_0 \in \rB_{qp,\per}^{2(1-\nicefrac{1}{p})}(\Omega;\R^2) \ $ satisfies $v_0|_{\Gamma_u}=0 \ $ if $\ \frac{1}{2q}+ \frac{1}{p} < 1 \ $ and
          $\ (\dz v_0)|_{\Gamma_b }=0 \ $ if $\ \frac{1}{2}+\frac{1}{2q}+ \frac{1}{p} < 1$.
    \end{enumerate}
%\end{asu}

\begin{thm}[local strong well-posedness]\label{thm:localwpCAO} \mbox{}\\
Let $T>0$ and assume that $(\xi_0,v_0)$ satisfy assumption (A). 
Then there exists $0 < a =a(v_0) \leq T$ such that
    \begin{enumerate}[(a)]
        \item  the system  \eqref{eq:compressibleprimitivegravity2} admits a unique, strong solution $(\xi, v) \in \E_{1,a}$ satisfying
    \begin{equation*}
        \begin{aligned}
            \label{eq:solCAO}
            \xi \in \rH^{1,p}(0,a;\rH_\per^{1,q}(G;\R)), \quad
            v \in \rH^{1,p}(0,a; \rLq(\Omega,\R^2)) \cap \rLp(0,a; \mathrm{Y}) 
          \end{aligned}
    \end{equation*}
    and there exist constants $M_1^\ast$, $M_2^\ast>0$ such that $M_1^\ast \leq \xi \leq M_2^\ast$ for all $t \in (0,a)$. % and all  $(x,y) \in G$. 
    \item The assertion $(a)$ holds true also for the system \eqref{eq:compressiblegamma}.
    \end{enumerate}
\end{thm}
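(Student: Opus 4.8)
The plan is to reduce \eqref{eq:compressibleprimitivegravity2} (resp.\ \eqref{eq:compressiblegamma}) via the hydrostatic Lagrangian transformation of \autoref{subsec:lagrangecor} to a quasilinear parabolic problem whose principal part is the hydrostatic Lam\'e operator of \autoref{sec:lintheo}, and then to close a contraction argument on a short time interval.

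\emph{Step 1: Lagrangian reduction.} Let $\mathrm X(t,\cdot)$ be the flow of $\bar v$ with horizontal Lagrangian variable $y_\H$ and Jacobian $J\coloneqq\det\nabla_{y_\H}\mathrm X$. Since $\xi$ is $z$-independent and \eqref{eq:compressibleprimitivegravity2}$_1$ reads $\dt\xi+\divH(\xi\bar v)=0$, in the variables $(t,y_\H,z)$ the continuity equation becomes algebraic, $\xi(t,\mathrm X(t,y_\H))=\xi_0(y_\H)/J(t,y_\H)$; thus $\xi$ is an explicit pointwise functional of $\bar v$, and for short time $J$ stays close to $1$, so that $M_1^\ast\le\xi\le M_2^\ast$ on $(0,a)$ for suitable $M_1^\ast,M_2^\ast>0$. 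The convective derivative $\dt v+\bar v\cdot\nablaH v$ becomes $\dt v$, and, after multiplying the velocity equation by $J/\xi_0$ (legitimate as $M_1\le\xi_0\le M_2$), \eqref{eq:compressibleprimitivegravity2}$_2$ takes the form
\[
\dt v-\tfrac{J}{\xi_0}\Big[\tfrac{\mu}{1-\delta z}\,\mathcal A_{\mathrm X}v+\mu\,\dz\!\big(\tfrac{1-\delta z}{\delta^{2}}\,\dz v\big)+\tfrac{\mu'}{1-\delta z}\,\mathcal B_{\mathrm X}v\Big]=\mathcal N(v),
\]
where $\mathcal A_{\mathrm X},\mathcal B_{\mathrm X}$ are the pullbacks of $\DeltaH,\nablaH\divH$, equal to $\DeltaH,\nablaH\divH$ at $t=0$ (where $\nabla_{y_\H}\mathrm X=\mathrm{Id}$), and $\mathcal N(v)$ gathers the transported $F$-term of \eqref{eq:compressibleprimitivegravity2} --- the pressure gradient built from $\nabla\xi_0$ and $J$, the term involving $w\,\dz v$ with $w$ from \eqref{eq:w}, the residual convection $(v-\bar v)\cdot\nabla v$ --- together with the second-order commutators $(\mathcal A_{\mathrm X}-\DeltaH)v$ and $(\mathcal B_{\mathrm X}-\nablaH\divH)v$, whose coefficients are $\lesssim\|\nabla_{y_\H}\mathrm X-\mathrm{Id}\|_\infty$ and hence small for small time.

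\emph{Step 2: linear theory.} Freezing the coefficients at $t=0$ leaves the reference operator $A\coloneqq\tfrac1{\xi_0}A_{\mathrm{HL},\delta}$, where $A_{\mathrm{HL},\delta}v\coloneqq-\tfrac{\mu}{1-\delta z}\DeltaH v-\mu\dz(\tfrac{1-\delta z}{\delta^{2}}\dz v)-\tfrac{\mu'}{1-\delta z}\nablaH\divH v$ is the anisotropic hydrostatic Lam\'e operator on $\mathrm Y$ subject to \eqref{eq:bc}. Since the weights $(1-\delta z)^{\pm1}$ are smooth and bounded above and below on $[0,1]$ (as $0<\delta<1$) and, because $q>2$, $\tfrac1{\xi_0}\in\rH^{1,q}_\per(G;\R)\hookrightarrow C^{0,1-2/q}(\bar G)$ with $\tfrac1{M_2}\le\tfrac1{\xi_0}\le\tfrac1{M_1}$, the parameter-ellipticity of the hydrostatic Lam\'e operator proved in \autoref{sec:lintheo} is preserved; hence $-A$ admits a bounded $\mathcal H^\infty$-calculus of angle $<\pi/2$ on $\rLq(\Omega;\R^2)$, so in particular $A$ has maximal $\rL^p$-regularity on $(0,a)$, with constant depending only on $M_1,M_2,\|\xi_0\|_{\rH^{1,q}_\per(G)},\mu,\mu',\delta,p,q$. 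The $t=0$ trace of the maximal regularity space $\E^v_{1,a}$ is the real interpolation space $(\rLq(\Omega;\R^2),\mathrm Y)_{1-1/p,p}=\rB^{2(1-1/p)}_{qp,\per}(\Omega;\R^2)$ carrying exactly the constraints $v_0|_{\Gamma_u}=0$ when $\tfrac1p+\tfrac1{2q}<1$ and $(\dz v_0)|_{\Gamma_b}=0$ when $\tfrac12+\tfrac1{2q}+\tfrac1p<1$, which is precisely what Assumption~(A)(ii) imposes; the excluded borderline values $\tfrac1p+\tfrac1{2q}\in\{\tfrac12,1\}$ are the ones for which these traces are not well defined.

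\emph{Step 3: contraction and transfer back; the case $\gamma=2$.} Pick $v_\ast\in\E^v_{1,T}$ with $v_\ast(0)=v_0$ and $\dt v_\ast+Av_\ast=0$, and for $a\le T$ and $R>0$ define on $B\coloneqq\{v\in\E^v_{1,a}:v(0)=v_0,\ \|v-v_\ast\|_{\E^v_{1,a}}\le R\}$ the map $\Phi$ sending $v$ to the solution $w$ of $\dt w+Aw=G(v)$, $w(0)=v_0$, where $G(v)$ collects $\mathcal N(v)$ and the quasilinear remainder $R_{\mathrm X}v$, with $R_{\mathrm X}\coloneqq A+\tfrac J{\xi_0}\mathcal L_{\mathrm X}$ and $\mathcal L_{\mathrm X}$ the bracketed operator of Step~1 --- an operator whose top-order part vanishes at $t=0$ and is therefore $\lesssim a^{\theta}\|v\|_{\E^v_{1,a}}$-small for some $\theta>0$. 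Combining the maximal regularity of Step~2, the trace and mixed-norm embeddings of $\E^v_{1,a}$ (notably $\E^v_{1,a}\hookrightarrow C([0,a];\rB^{2(1-1/p)}_{qp,\per}(\Omega))$), the Lipschitz dependence of $\mathrm X,J,\nabla_{y_\H}\mathrm X,\xi$ and $w$ on $\bar v$, and the smallness of $a$ (which forces $\|\nabla_{y_\H}\mathrm X-\mathrm{Id}\|_\infty$, $\|J-1\|_\infty$ and $\|\xi-\xi_0\|_{\rH^{1,q}_\per(G)}$ small, hence the bounds $M_1^\ast\le\xi\le M_2^\ast$), one checks that for $a=a(v_0)$ small enough $\Phi$ maps $B$ into itself and is a strict contraction there; its unique fixed point $v$, with $\xi$ from the algebraic formula of Step~1 and $w$ from \eqref{eq:w}, solves the Lagrangian problem, and undoing the transformation --- invertible on $[0,a]$ --- yields the unique $(\xi,v)\in\E_{1,a}$ solving \eqref{eq:compressibleprimitivegravity2} with the asserted bounds; the regularity and positivity pass to $\varrho$ via \autoref{rem:systemeq}. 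For part~(b), \eqref{eq:compressiblegamma} has the same structure: the factor in front of $\dt v$ is $\xi+z/2$ (still $\rH^{1,q}$-regular, positive and bounded on $(0,a)$), the principal part is the plain hydrostatic Lam\'e operator $A_{\mathrm{HL}}$ of \autoref{sec:lintheo} (no $z$-weights), the continuity equation carries the extra lower-order term $\tfrac12\overline{z\divH v}$, so along characteristics it becomes a scalar linear ODE for $\xi$ driven by $v$, and the pressure term $(2\xi+z)\nablaH\xi$ is of the same type as those handled above --- whence Steps~1--3 carry over verbatim.

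\emph{Main obstacle.} The quasilinear character: the density factor $\xi$ (resp.\ $\xi+z/2$) multiplying $\dt v$ is only $\rH^{1,q}$-regular, time dependent and non-constant. The crux is (i) establishing maximal $\rL^p$-regularity --- equivalently, the bounded $\mathcal H^\infty$-calculus --- for $\tfrac1\xi A_{\mathrm{HL},\delta}$ with such a non-smooth coefficient, for which $q>2$ and hence $\rH^{1,q}(G)\hookrightarrow C^{0,\alpha}(\bar G)$ is essential; and (ii) absorbing the deviations of $\xi$ from $\xi_0$ and of $\nabla_{y_\H}\mathrm X$ from $\mathrm{Id}$ into the contraction by shrinking $a$. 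This is where the bulk of the technical work lies.
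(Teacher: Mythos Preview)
Your proposal is correct and follows essentially the same route as the paper: hydrostatic Lagrangian reduction, bounded $\mathcal H^\infty$-calculus (hence maximal $\rL^p$-regularity) for the hydrostatic Lam\'e operator $\tfrac{1}{\xi_0}A_{\mathrm{HL},\delta}$ with H\"older coefficient $\tfrac{1}{\xi_0}$, and a contraction around the reference solution $v_\ast$ on a short interval. The one genuine variation is your treatment of the continuity equation: you exploit that \eqref{eq:compressibleprimitivegravity2}$_1$ is exactly $\dt\xi+\divH(\xi\bar v)=0$ to write the Lagrangian density algebraically as $\zeta=\xi_0/J$, whereas the paper keeps $\zeta$ as a separate unknown, linearises $\dt\zeta+\xi_0\divH\bar V=F_1$, and controls it via \autoref{cor:conteq} and the estimate on $F_1$ in \autoref{lem:nonlinearestimates}. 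Both are equivalent---your variant is slightly leaner for $\gamma=1$ (the bounds $M_1^\ast\le\xi\le M_2^\ast$ and the $\rH^{1,p}(0,a;\rH^{1,q}_\per(G))$ regularity of $\zeta$ follow directly from the estimates on $J$ already contained in \autoref{subsec:lagrangecor})---while for $\gamma=2$ you correctly revert to the ODE picture, since the extra source $\tfrac12\overline{z\,\divH v}$ in \eqref{eq:compressiblegamma}$_1$ destroys the pure transport structure, which is precisely how the paper handles that case as well.
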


Next, we state our  result concerning global well-posedness for small initial data.

\begin{thm}[global strong well-posedness for small data]\label{thm:globalWP} \mbox{} \\
Assume that $(\xi_0, v_0)$ satisfy assumption (A) and in addition  that
    \begin{equation*}
        \begin{aligned}
            \frac{1}{|G|}\int_G \xi_0 \d (x,y) = \bar{\xi} >0 \quad \text{ and } \quad
            \| (\xi_0-\bar{\xi},v_0) \|_{\rH^{1,q}(G) \times \rB_{qp}^{2(1-\nicefrac{1}{p})}(\Omega)}  \leq \delta
        \end{aligned}
      \end{equation*}
for some $\delta>0$.       
Then there exists $\eta_0 >0$ such that for all $\eta \in (0,\eta_0)$ there exists $\delta_0>0$ and $C>0$ such that for all $\delta \in (0,\delta_0)$  
the system \eqref{eq:compressibleprimitivegravity2} admits a unique, strong solution $(\xi,v) \in \E_{1,\infty}$ satisfying  
    \begin{equation*}
        \begin{aligned}
            &\| \xi-\bar{\xi} \|_{\rL^\infty(\R_+;\rH^{1,q}(G))} + \| \mathrm{e}^{\eta(\cdot)} \nablaH \xi \|_{\rH^{1,p}(\R_+; \rLq(G))} + \|  \mathrm{e}^{\eta(\cdot)} (\dt \xi) \|_{\rLp(\R_+; \rLq(G))} \\
            &+ \|  \mathrm{e}^{\eta(\cdot)} v \|_{\rLp(\R_+;\rH^{2,q}(\Omega))} + \|  \mathrm{e}^{\eta(\cdot)} (\dt v )\|_{\rLp(\R_+ ; \rLq(\Omega))} + \|  \mathrm{e}^{\eta(\cdot)} v \|_{\rL^\infty(\R_+ ; \rB_{qp}^{2(1-\nicefrac{1}{p})}(\Omega))}\leq C \delta. \\
            %&+ \|  \mathrm{e}^{\eta(\cdot)}  \vocn \|_{\rLp(\R_+;\rH^{2,q}(\Omegaocn))} + \| \mathrm{e}^{\eta(\cdot)}(  \dt \vocn )\|_{\rLp(\R_+ ; \rLq(\Omegaocn))} + \|  \mathrm{e}^{\eta(\cdot)}\vocn \|_{\rL^\infty(\R_+ ;  \rX^\ocn_\gamma))} \\
         %   &+ \|  \mathrm{e}^{\eta(\cdot)}\pi_s \|_{\rLp(\R_+; \rH^{1,q}(G)) }\leq C \delta.
        \end{aligned}
      \end{equation*}
Moreover, $\xi(t,x,y) \geq \nicefrac{\bar{\xi}}{2}$ for all $t \in \R_+$ and all $(x,y) \in G$.
\end{thm}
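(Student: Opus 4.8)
The plan is to recast the $\gamma=1$ system \eqref{eq:compressibleprimitivegravity2} (which by \autoref{rem:systemeq} is equivalent to the original problem) as an abstract evolution equation around the constant state $(\bar{\xi},0)$ and to solve it by maximal‑regularity theory in exponentially weighted spaces on the half‑line. First I would note that integrating \eqref{eq:compressibleprimitivegravity2}$_1$ over $G$ and using periodicity gives $\tfrac{d}{dt}\int_G\xi\d(x,y)=0$, so $\bar{\xi}=|G|^{-1}\int_G\xi_0\d(x,y)$ is conserved. Writing $\xi=\bar{\xi}+\sigma$, $U=(\sigma,v)$, moving $-\nablaH\xi=-\nablaH\sigma$ and the viscous terms to the left, dividing the momentum equation by $\xi$ and splitting $\tfrac1\xi=\tfrac1{\bar{\xi}}+(\tfrac1\xi-\tfrac1{\bar{\xi}})$, the system becomes
\begin{equation*}
\partial_t U + A_{\mathrm{CHS}}\,U = N(U),\qquad U(0)=U_0:=(\xi_0-\bar{\xi},\,v_0),
\end{equation*}
on the closed subspace $\rX_0^0:=\{(\sigma,v)\in\rX_0:\int_G\sigma\d(x,y)=0\}$, where $A_{\mathrm{CHS}}$ is the compressible hydrostatic Stokes operator of \autoref{sec:global} (with its fixed smooth bounded coefficients $\tfrac{1-\delta z}{\delta^2}$, $(1-\delta z)^{-1}$ and the constant $\bar{\xi}$), and $N(U)$ collects all genuinely nonlinear contributions: $\sigma\divH\vbar$ and $\vbar\cdot\nablaH\sigma$ from the continuity equation; the convective terms $\vbar\cdot\nablaH v$, $(v-\vbar)\cdot\nablaH v$ and $\tfrac{1-\delta z}{\delta}\,w\,\dz v$ with $w$ reconstructed from $\xi w=-\int_0^z\divH(\xi\vtilde)\d z'$; and the small‑coefficient corrections $(\tfrac1{\bar{\xi}}-\tfrac1\xi)$ times the viscous part and $(\tfrac1\xi-\tfrac1{\bar{\xi}})\nablaH\sigma$. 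The point of the bookkeeping is that every summand of $N$ is either at least quadratic in $U$ or a product of the pointwise‑small factor $\sigma$ with a quantity controlled in the solution norm.

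Next I would use the linear theory of \autoref{sec:global}: $-A_{\mathrm{CHS}}$ has a bounded $\Hinfty$‑calculus of angle $<\pi/2$ on $\rX_0=\rH^{1,q}_\per(G;\R)\times\rLq(\Omega;\R^2)$ and its restriction to the mean‑free subspace $\rX_0^0$ is invertible; hence $\re\,\mathrm{spec}(A_{\mathrm{CHS}}|_{\rX_0^0})\geq c_0$ for some $c_0>0$, and $A_{\mathrm{CHS}}|_{\rX_0^0}$ has maximal $\rL^p$‑regularity on $[0,\infty)$. I would then set $\eta_0:=c_0$: for $\eta\in(0,\eta_0)$ the shifted operator $A_{\mathrm{CHS}}-\eta$ is still invertible on $\rX_0^0$ with bounded $\Hinfty$‑calculus of angle $<\pi/2$, hence enjoys maximal $\rL^p$‑regularity on $[0,\infty)$. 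Introducing the weighted spaces $\E_{1,\infty}^\eta:=\{U:\mathrm{e}^{\eta(\cdot)}U\in\E_{1,\infty}\}$ and $\E_{0,\infty}^\eta:=\{F:\mathrm{e}^{\eta(\cdot)}F\in\E_{0,\infty}\}$ and substituting $U\mapsto\mathrm{e}^{\eta t}U$, this delivers the linear a priori estimate
\begin{equation*}
\|U\|_{\E_{1,\infty}^\eta}\leq C_0\big(\|U_0\|_{\rX_\gamma}+\|F\|_{\E_{0,\infty}^\eta}\big),\qquad C_0=C_0(\eta),
\end{equation*}
for $\partial_t U+A_{\mathrm{CHS}}U=F$, $U(0)=U_0$, where $\rX_\gamma:=(\rX_0,\rX_1)_{1-\nicefrac1p,p}$ is the trace space. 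By real interpolation, $\rX_\gamma=\rH^{1,q}_\per(G;\R)\times\rB^{2(1-\nicefrac1p)}_{qp,\per}(\Omega;\R^2)$ with exactly the boundary traces prescribed in assumption (A) (which is where the exceptional values of $\tfrac1p+\tfrac1{2q}$ must be excluded), so assumption (A) together with the smallness hypothesis is equivalent to $U_0\in\rX_\gamma$ with $\|U_0\|_{\rX_\gamma}\leq C\delta$.

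Then I would close the argument by a contraction in a small ball. Let $\mathcal{B}_R$ be the ball of radius $R$ in $\E_{1,\infty}^\eta$ intersected with the constraint $\|\sigma\|_{\rL^\infty(\R_+;\rH^{1,q}(G))}\leq\bar{\xi}/4$; since $\E_{1,\infty}^\eta\hookrightarrow\mathrm{BUC}(\R_+;\rX_\gamma)$ and $\rH^{1,q}(G)\hookrightarrow\rL^\infty(G)$ for $q>2$, every element of $\mathcal{B}_R$ satisfies $\xi\geq\tfrac34\bar{\xi}$ pointwise. For $\tilde U\in\mathcal{B}_R$ let $\Phi(\tilde U)\in\E_{1,\infty}^\eta$ solve the linear problem of the previous step with $F=N(\tilde U)$ and $U(0)=U_0$; the estimates to be established are
\begin{equation*}
\|N(\tilde U)\|_{\E_{0,\infty}^\eta}\leq C_1 R^2,\qquad \|N(\tilde U_1)-N(\tilde U_2)\|_{\E_{0,\infty}^\eta}\leq C_1 R\,\|\tilde U_1-\tilde U_2\|_{\E_{1,\infty}^\eta}
\end{equation*}
for $\tilde U,\tilde U_1,\tilde U_2\in\mathcal{B}_R$ with $R\leq1$. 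These follow from Hölder's inequality (the weight $\mathrm{e}^{\eta t}$ is carried by one factor while the other is taken in an unweighted $\rL^\infty$‑in‑time norm supplied by the trace embedding), from anisotropic product and Sobolev embedding estimates for the convective terms, from the bound $\|w\|_{\rL^p(\rLq(\Omega))}\lesssim\|\divH(\xi\vtilde)\|_{\rL^p(\rL^q_{xy}(\rL^1_z))}$ for the reconstructed vertical velocity, and from $\xi\geq\tfrac34\bar{\xi}$ to control $\tfrac1\xi$. Choosing $R:=2C_0C\delta$ and then $\delta_0=\delta_0(\eta)>0$ so small that $C_0C_1R\leq\tfrac12$ and $R<\bar{\xi}/4$ in the relevant norm, the a priori estimate gives $\|\Phi(\tilde U)\|_{\E_{1,\infty}^\eta}\leq C_0(C\delta+C_1R^2)\leq R$, the contraction bound with constant $\tfrac12$, and preservation of the constraint since $\|\sigma\|_{\rL^\infty(\R_+;\rH^{1,q}(G))}\leq\|\Phi(\tilde U)\|_{\E_{1,\infty}^\eta}\leq R$. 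Banach's fixed point theorem then yields a unique $U=(\sigma,v)\in\mathcal{B}_R$, hence a unique strong solution $(\xi,v)=(\bar{\xi}+\sigma,v)\in\E_{1,\infty}$ of \eqref{eq:compressibleprimitivegravity2} (uniqueness in the full class being inherited from the local uniqueness of \autoref{thm:localwpCAO} by a continuation argument); reading off the weighted norms of $U$ gives the stated estimate with $C=R/\delta$, and $\|\xi-\bar{\xi}\|_{\rL^\infty(\R_+;\rH^{1,q}(G))}\leq R<\bar{\xi}/4$ yields $\xi\geq\tfrac34\bar{\xi}\geq\bar{\xi}/2$ on $\R_+\times G$.

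The hardest part will be the nonlinear estimates, uniformly on the half‑line, in the exponentially weighted maximal‑regularity spaces. The awkward terms are the nonlocal convective term $\tfrac{1-\delta z}{\delta}\,w\,\dz v$ — with $w$ obtained by vertical integration of $\divH(\xi\vtilde)$, so one needs anisotropic product and trace estimates tailored to the hydrostatic structure — and the coefficient $\tfrac1\xi$ in front of $\partial_t v$ and of the viscous terms, which is only controlled once the lower bound $\xi\geq\bar{\xi}/2$ has been fed back into the iteration set. Because the result is global in time there is no short‑time smallness to exploit, so the exponential decay generated by the spectral gap $\re\,\mathrm{spec}(A_{\mathrm{CHS}}|_{\rX_0^0})\geq c_0>0$ (equivalently, invertibility of $A_{\mathrm{CHS}}$ on the mean‑free subspace) must be used in an essential way, and the choice of $\eta<\eta_0$ must be made compatibly with those product estimates.
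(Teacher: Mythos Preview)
Your overall architecture (linearize around $(\bar\xi,0)$, use the spectral gap of $A_{\mathrm{CHS}}$ on the mean-free subspace to get exponentially weighted maximal regularity on $\R_+$, then contract in a small ball) matches the paper's strategy. However, you work in \emph{Eulerian} coordinates, whereas the paper performs the entire fixed-point argument in \emph{hydrostatic Lagrangian} coordinates (the transform of \autoref{subsec:lagrangecor}, adapted in \autoref{sec:global} to subtract $\bar\xi$). This is not a cosmetic difference; in the present regularity framework your Eulerian scheme has a genuine gap.

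The issue is the transport term $\vbar\cdot\nablaH\sigma$ that you place in $N(U)$. The data space for the density component is $\rH^{1,q}(G)$ (recall $\rX_0=\rH^{1,q}_\per(G)\times\rL^q(\Omega)$ and $\rX_1=\rH^{1,q}_\per(G)\times\mathrm{Y}$, so the density sees no parabolic smoothing), hence your nonlinear estimate requires $\vbar\cdot\nablaH\sigma\in\rL^p(\R_+;\rH^{1,q}(G))$. Differentiating once produces $\vbar\cdot\nablaH^2\sigma$, but $\sigma$ lives only in $\rL^\infty(\R_+;\rH^{1,q}(G))$, and there is no mechanism in your iteration that upgrades it to $\rH^{2,q}$. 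The quadratic smallness of the term is irrelevant; the derivative count simply does not close. (The term $\sigma\,\divH\vbar$ is fine, since $\divH\vbar\in\rL^p(\rH^{1,q})$ and $\rH^{1,q}(G)$ is a Banach algebra for $q>2$; the obstruction is specifically the convection of $\sigma$.) This is precisely the ``hyperbolic effect in the continuity equation'' that the paper's hydrostatic Lagrangian change of variables is designed to remove: after the transform, the right-hand side $F_1$ in \eqref{eq:nonlinearities2} contains only $\zeta\,\divH\bar V$ and $(\zeta+\bar\xi)\,\nablaH\bar V:(\mathrm{Z}^\top-\mathbb{I}_2)$, with \emph{no} term of the form $\bar V\cdot\nablaH\zeta$, and both survive the $\rH^{1,q}$ estimate (\autoref{prop:estnonlin}). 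The price is that the nonlinearities now involve the matrix $\mathrm{Z}-\mathbb{I}_2$, whose smallness on $\R_+$ is obtained from the exponential weight via $\|\nablaH\rX-\mathbb{I}_2\|_{\rL^\infty(\R_+;\rH^{1,q})}\le C\|\mathrm e^{\eta(\cdot)}V\|_{\E^v_{1,\infty}}$; the paper also keeps separate track of $\zeta_{\mathrm{avg}}$ (controlled via $F_{1,\mathrm{avg}}\in\rL^1(\R_+)$) since the Lagrangian $\zeta$ need not have mean zero. To repair your argument you should pass to Lagrangian coordinates before setting up the contraction, as the paper does, and only transform back to $(\xi,v)$ at the end.
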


Finally, we obtain the following result dealing with the case of a general pressure law but without gravity. 

\begin{thm}[local strong and global strong well-posedness for small data]\label{thm:generalpressure}  \mbox{} \\
Assume that $(\xi_0, v_0)$ satisfy assumption (A).  
\begin{enumerate}[(a)]
\item Given  $T>0$ there exists $0 < a =a(v_0) \leq T$ such that the system
    \eqref{eq:compressibleprimtivegeneralpressure2}  admits a unique, strong solution $(\xi, v) \in \E_{1,a}$ satisfying $M_1^\ast \leq \xi \leq M_2^\ast$ for some
    constants $M_1^\ast$, $M_2^\ast>0$ and all $t \in (0,a)$. 
  \item There exists $\eta_0 >0$ such that for all $\eta \in (0,\eta_0)$ there exists $\delta_0>0$ and $C>0$ such that for all
    $\delta \in (0,\delta_0)$ the system \eqref{eq:compressibleprimtivegeneralpressure2} admits a unique, strong
    solution $(\xi,v) \in \E_{1,\infty}$ satisfying 
    \begin{equation*}
        \begin{aligned}
            &\| \xi-\bar{\xi} \|_{\rL^\infty(\R_+;\rH^{1,q}(G))} + \| \mathrm{e}^{\eta(\cdot)} \nablaH \xi \|_{\rH^{1,p}(\R_+; \rLq(G))} + \|  \mathrm{e}^{\eta(\cdot)} (\dt \xi) \|_{\rLp(\R_+; \rLq(G))} \\
            &+ \|  \mathrm{e}^{\eta(\cdot)} v \|_{\rLp(\R_+;\rH^{2,q}(\Omega))} + \|  \mathrm{e}^{\eta(\cdot)} (\dt v )\|_{\rLp(\R_+ ; \rLq(\Omega))} + \|  \mathrm{e}^{\eta(\cdot)} v \|_{\rL^\infty(\R_+ ; \rB_{qp}^{2(1-\nicefrac{1}{p})}(\Omega))}\leq C \delta. \\
            %&+ \|  \mathrm{e}^{\eta(\cdot)}  \vocn \|_{\rLp(\R_+;\rH^{2,q}(\Omegaocn))} + \| \mathrm{e}^{\eta(\cdot)}(  \dt \vocn )\|_{\rLp(\R_+ ; \rLq(\Omegaocn))} + \|  \mathrm{e}^{\eta(\cdot)}\vocn \|_{\rL^\infty(\R_+ ;  \rX^\ocn_\gamma))} \\
         %   &+ \|  \mathrm{e}^{\eta(\cdot)}\pi_s \|_{\rLp(\R_+; \rH^{1,q}(G)) }\leq C \delta.
        \end{aligned}
    \end{equation*}
Moreover,  $\xi(t,x,y) \geq \nicefrac{\bar{\xi}}{2}$ for all $t \in \R_+$ and all $(x,y) \in G$.
\end{enumerate}
\end{thm}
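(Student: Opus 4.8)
The plan is to follow the pattern of \autoref{thm:localwpCAO} and \autoref{thm:globalWP}: the system \eqref{eq:compressibleprimtivegeneralpressure2} has the same structure as \eqref{eq:compressibleprimitivegravity2} and \eqref{eq:compressiblegamma}, the only genuinely new feature being the nonlinear pressure term $P'(\xi)\nablaH\xi$, and the absence of gravity in fact simplifies matters since $\varrho=\xi$ directly by \autoref{rem:systemeq}, with no vertical change of variables needed. First I would apply the hydrostatic Lagrangian transform of \autoref{subsec:lagrangecor} along the flow $\rX$ of $\vbar$. As $\vbar$ and $\rX$ are independent of $z$ and $\dz\xi=0$, the purely horizontal continuity equation $\dt\xi+\divH(\xi\vbar)=0$ integrates along $\rX$ to the algebraic relation $(\xi\circ\rX)\det(\nablaH\rX)=\xi_0$, so that $\xi$ is slaved to the flow, hence to $\vbar$; by Assumption~(A)(i) and $\det\nablaH\rX\to 1$ as $t\to 0$ it remains in a fixed compact positive interval on short time intervals. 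Substituting this into the momentum equation, the Lagrangian system reduces to a quasilinear equation for $v$ alone, $\xi_0\,\dt v-\mu\Delta v-\mu'\nablaH\divH v=\mathcal{N}(v)$ (with $\xi$ and $w$ reconstructed a posteriori from the flow and from \eqref{eq:w}), whose linearization is the hydrostatic Lam\'e operator $-A_{\mathrm{HL}}$ of \autoref{sec:lintheo}, respectively the compressible hydrostatic Stokes operator $A_{\mathrm{CHS}}$ of \autoref{sec:global} when one keeps the pair $(\xi,v)$; the nonlinearity $\mathcal{N}$ collects the transported convective terms $\vtilde\cdot\nablaH v+w\dz v$, the pressure contribution $P'(\xi)\nablaH\xi$ expressed through the flow, and the commutator terms between the Eulerian and Lagrangian differential operators, whose coefficients are controlled by $\nablaH\rX-I$ and hence small for small time.

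For part (a) I would run a contraction mapping in a ball of $\E^v_{1,a}$: to $v$ in the ball associate the maximal-regularity solution $\Phi(v)$ of the linear problem with right-hand side $\mathcal{N}(v)$, reconstructing $\xi$ from the flow. Using the maximal $\rL^p$-regularity of $-A_{\mathrm{HL}}$ (boundedness of its $\Hinfty$-calculus of angle $<\pi/2$ on $\rLq(\Omega;\R^2)$, from \autoref{sec:lintheo}) together with the standard product, trace and Sobolev embedding estimates available in $\E^v_{1,a}$ for $p,q>2$, each term of $\mathcal{N}$ is seen to carry a positive power $a^\theta$ of the time length — the commutator coefficients and $\xi-\xi_0$ because $\rX$ stays close to the identity, the convective and pressure terms by H\"older in time — so that $\Phi$ is a self-map and a strict contraction once $a=a(v_0)$ is small. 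The algebraic relation then gives $M_1^\ast\le\xi\le M_2^\ast$, and undoing the (small-time invertible) Lagrangian transform, with $\varrho=\xi$, completes (a).

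For part (b) I would instead linearize around the constant equilibrium $(\bar\xi,0)$, writing $\xi=\bar\xi+\sigma$; the mean value $\int_G\sigma(t)=\int_G\sigma(0)=0$ is conserved because the continuity equation preserves $\int_G\xi$ under the periodic boundary conditions. After the Lagrangian transform the linearized part is the compressible hydrostatic Stokes operator $A_{\mathrm{CHS}}$ with the positive constant $P'(\bar\xi)$ in the role of the pressure coefficient; its bounded $\Hinfty$-calculus of angle $<\pi/2$ on $\rH^{1,q}_\per(G;\R)\times\rLq(\Omega;\R^2)$ is obtained by the very same perturbation of $-A_{\mathrm{HL}}$ as in \autoref{sec:global}, and on the mean-value-free subspace $A_{\mathrm{CHS}}$ is invertible, so $A_{\mathrm{CHS}}+\eta$ retains maximal $\rL^p$-regularity on $[0,\infty)$ for $\eta$ below the spectral bound. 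Working in the exponentially weighted class (i.e.\ requiring $\mathrm{e}^{\eta(\cdot)}$ times the solution to lie in $\E_{1,\infty}$), one checks that every term of $\mathcal{N}$ is at least quadratic in $(\sigma,v)$: in particular the pressure nonlinearity reads $-\bigl(P'(\bar\xi+\sigma)-P'(\bar\xi)\bigr)\nablaH\sigma$ plus contributions from $\nablaH\rX-I$, which is quadratic by smoothness of $P$, and the commutator terms are quadratic through $\nablaH\rX-I$. Hence for initial data of size $\delta$ the fixed point closes in the weighted ball of radius $C\delta$ once $\delta$ is small; the resulting smallness of $\|\sigma\|_{\rL^\infty(\R_+;\rH^{1,q}(G))}$, together with $\rH^{1,q}(G)\hookrightarrow\rL^\infty(G)$ for $q>2$, gives $\xi\ge\bar\xi/2$, and transforming back with $\varrho=\xi$ yields the stated estimate.

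The main obstacle, relative to the $\gamma=1$ case already treated in \autoref{thm:globalWP}, is precisely the handling of $P'(\xi)\nablaH\xi$: unlike the constant coefficient of the ideal-gas law this is a genuine nonlinearity in the top-order-in-$\xi$ term of the momentum equation, and one must verify that — after slaving $\xi$ to the flow and splitting $P'(\xi)\nablaH\xi=P'(\bar\xi)\nablaH\sigma+\bigl(P'(\xi)-P'(\bar\xi)\bigr)\nablaH\sigma$ around the equilibrium — it leaves the leading linear operator structurally unchanged while producing only quadratic remainders controllable in the maximal regularity norm. This is exactly where the hypotheses $P\in C^\infty$ and $0<c_1\le P'\le c_2$ on the range of $\xi$ from \eqref{eq:pressurelaw} are used: the former to linearize cleanly, the latter to keep $\xi$ bounded and bounded away from zero so that the reconstruction of $\xi$, the weighting by $\xi_0$, and the positivity bookkeeping all make sense.
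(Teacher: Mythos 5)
Your proposal is correct and reaches the same result, but it handles the one genuinely new term, the nonlinear pressure $P'(\xi)\nablaH\xi$, by a different device than the paper, and it also organizes the local fixed point differently. For part (a) you slave $\zeta$ to the flow via the transport identity $(\xi\circ\rX)\det\nablaH\rX=\xi_0$ and contract in $V$ alone (with $-A_{\mathrm{HL}}$ as the linear operator), whereas the paper keeps the coupled pair $(\zeta,V)$, solves the Lagrangian continuity equation as a separate linear ODE-in-time with quadratic source (\autoref{cor:conteq}), and contracts the pair as in \autoref{thm:localwpCAO}; both are valid, your version trades one fewer iteration component for a division by $\det\nablaH\rX$ that requires $q>2$ so that $\rH^{1,q}(G)$ is a Banach algebra stable under inversion of functions bounded away from zero, which indeed holds. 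For part (b), the more substantive point, you freeze the pressure coefficient at equilibrium and write $P'(\bar\xi+\sigma)\nablaH\sigma=P'(\bar\xi)\nablaH\sigma+\bigl(P'(\bar\xi+\sigma)-P'(\bar\xi)\bigr)\nablaH\sigma$, absorbing the first piece into $A_{\mathrm{CHS}}$ (after the trivial rescaling $\sigma\mapsto P'(\bar\xi)\sigma$, which only replaces $\bar\xi$ by $P'(\bar\xi)\bar\xi$) and estimating the second as an $O(\varepsilon^2)$ remainder via $\|\sigma\|_{\rL^\infty(\R_+\times G)}\lesssim\varepsilon$ and the smoothness of $P$. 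The paper instead writes $P'(\zeta+\bar\xi)(\mathrm{Z}^\top\nablaH\zeta)=\nablaH\bigl(P(\zeta+\bar\xi)\bigr)+P'(\zeta+\bar\xi)\nablaH\zeta\cdot(\mathrm{Z}-\mathbb{I}_2)$, introduces $\tilde\zeta=P(\zeta+\bar\xi)-P(\bar\xi)$ as the new density variable, solves the resulting system for $(\tilde\zeta,V)$ exactly as in \autoref{thm:globalWP}, and recovers $\zeta$ by the invertibility of $P$ guaranteed by \eqref{eq:pressurelaw}. The two routes use the same hypotheses ($P\in\rC^\infty$, $c_1\le P'\le c_2$) and the same linear theory (\autoref{prop:rsec}, \autoref{cor:expstable}); the paper's change of variable renders the leading pressure contribution exactly linear in $\tilde\zeta$ at the cost of a nonlinear change of unknown in the continuity equation, while your coefficient-freezing keeps the original unknown $\sigma$ at the cost of one extra quadratic term to estimate. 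Both are sound, and the resulting maximal-regularity bookkeeping is essentially identical.
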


\begin{rem}{\rm 
Note  that the results given in the three theorems above are formulated  for the reformulated systems \eqref{eq:compressibleprimitivegravity2}, \eqref{eq:compressiblegamma} and
  \eqref{eq:compressibleprimtivegeneralpressure2}. In fact, the density evaluated at the surface $\xi$ is the relevant quantity for the describing the density $\varrho$. The above
  \autoref{rem:systemeq} shows that this is equivalent to solving the original systems \eqref{eq:compressibleprimtivegravity} and
  \eqref{eq:compressibleprimtivegeneralpressure}. Hence, the three well-posedness results stated above carry over to the original system  \eqref{eq:compressibleprimtivegravity} and
  \eqref{eq:compressibleprimtivegeneralpressure}
}
\end{rem}

\begin{rem}{\rm
By an approach similar to the one described for the  case $\gamma=2$, we verify  that the assertions of \autoref{thm:localwpCAO}$(a)$ hold true also for the general power law
  $p = \varrho^\gamma$, where $\gamma \geq 1$. However, for simplicity of the presentation we prove here in detail only the cases $\gamma \in \{1,2\}$.
    
}\end{rem}

\begin{rem}  \label{rem:energy}{ \rm
(a) The total mass of the system \eqref{eq:compressibleprimtivegravity} subject to $\gamma \geq 1$
  or \eqref{eq:compressibleprimtivegeneralpressure} is conserved, i. e.,  we have $\int_\Omega \rho = \int_\Omega \rho_0$.

  \noindent
(b) Consider the case with gravity and $\gamma=1$. Define the energy $E(t)$ by
        \begin{equation*}
            E(t) \coloneqq \int_\Omega \bigl ( \frac{1}{2}\xi |v|^2 + e(\xi) \bigr ) \ \text{ with }\ e(\xi) = \xi \log \xi +1 - \xi.
        \end{equation*}
        Taking inner products of \eqref{eq:compressibleprimitivegravity2}$_2$ with $v$ yields the energy balance
        \begin{equation*}
            E(t) + \int_0^t \bigl (\mu \| (1-\delta z)^{-\frac{1}{2}} \nablaH v \|^2_{2} + \frac{\mu}{\delta^2}  \| (1-\delta z)^{\frac{1}{2}} \dz v \|^2_{2} +\mu' \| (1-\delta z)^{-\frac{1}{2}} \divH v \|^2_{2} \bigr ) = E(0),
        \end{equation*}
        for all $t\in (0,a]$, where $a>0$ denotes the local existence time of the solution given in \autoref{thm:localwpCAO}.

 \noindent
(c) In the case without gravity and the general pressure law \eqref{eq:pressurelaw} we define the energy $\Tilde{E}(t)$ by
        \begin{equation*}
            \tilde{E}(t) \coloneqq \int_\Omega \bigl ( \frac{1}{2}\xi |v|^2 + \Tilde{e}(\xi) \bigr ) \ \text{ with }\ \Tilde{e}(\xi) =  \xi \int_1^\xi \frac{P(\xi)}{\xi^2} \d \xi - P(1)(\xi-1).
        \end{equation*}
        Then taking inner products of \eqref{eq:compressibleprimtivegeneralpressure2}$_2$ with $v$ yields the energy balance
        \begin{equation*}
            \tilde{E}(t) + \int_0^t \bigl (\mu \|  \nabla v \|^2_{2}  +\mu' \|  \divH v \|^2_{2} \bigr ) = \Tilde{E}(0),
        \end{equation*}
        for all $t\in (0,a]$, where $a>0$ denotes the local existence time given in \autoref{thm:generalpressure}
}
\end{rem}

\begin{rem}
    \label{rem:domain}{ \rm
      We may identify the domain $G=(0,1)^2$ subject to periodic boundary conditions with the two dimensional torus $\T^2$. Then the results described in the three theorem above hold true also for
  $\T^2 \times (0,1)$. Furthermore, \autoref{thm:localwpCAO} as well as \autoref{rem:energy} also hold true for $G = \R^2$. In that case the domain is given by a layer $\Omega_L := \R^2 \times (0,1)$.} 
\end{rem}

\section{Hydrostatic Lagrangian coordinates}\label{subsec:lagrangecor}

The purpose of this transformation is to mitigate the hyperbolic effects in the continuity equation. We term it the ``hydrostatic Lagrangian" as it defines the flow along 
$\vbar$, i.e., the solution 
$\mathrm{X}$ of the following differential equation
\begin{equation}
\label{eq:flow}
\left \{
    \begin{aligned}
        %\begin{cases}
            \dt \mathrm{X}(t,y_\H) &= \vbar (t,\mathrm{X}(t,y_\H)), \\
            \mathrm{X}(0,y_\H) &= y_\H,
       % \end{cases}
    \end{aligned}
    \right. 
\end{equation}
where $y_\H = (y_1,y_2) \in G$. The flow $\mathrm{X}(t,\cdot) : G \rightarrow G$ is a well-defined $C^1$-diffeomorphisms provided that $\vbar$ is regular enough. Note that $\mathrm{X}$ is only a
two dimensional flow. Solving \eqref{eq:flow} leads to
\begin{equation}\label{eq:diffeo X Euler Lagrange sea ice para-hyper}
    \rX(t,y_\H) = y_\H + \int_0^t \ \bar{v}(s,\rX(s,y_\H)) \d s, 
\end{equation}
for $y_\H \in G$.
Denoting by $\mathrm{Y}(t,\cdot)$, the inverse of $\rX(t,\cdot)$, we find that
\begin{equation*}
    \nablaH \mathrm{Y}(t,\rX(t,y_\H)) = [\nablaH \rX]^{-1}(t,y_\H). 
\end{equation*}
%where we  set  $\mathrm{Z} =\nablaH \mathrm{Y}$.
We observe that the diffeomorphisms $\rX$ and $\mathrm{Y}$ depend on $\vbar$ and  more precisely on $v$.

For $\rX$ as in \eqref{eq:diffeo X Euler Lagrange sea ice para-hyper} and $p'$ denoting the H\"older conjugate of~$p$, we see due to 
 $\| \vbar \|_{\E^v_{1,T}} \leq C \| v\|_{\E^v_{1,T}}$ that
\begin{equation}\label{eq:est of nablaH X - Id sea ice para-hyper}
    \begin{aligned}
        \sup_{t\in [0,T]} \| \nablaH \rX - \mathbb{I}_2 \|_{\rH^{1,q}(G)}
        &\le C \int_0^T \| \nablaH \vbar(t,\cdot) \|_{\rH^{1,q}(G)}
        &\le C T^{\nicefrac{1}{p'}} \| v \|_{\E^v_{1,T}}
    \end{aligned}
\end{equation}
The embedding $\rW^{1,q}(G) \hookrightarrow \rL^\infty(G)$ yields
$\sup_{t\in [0,T]} \| \nablaH \rX - \mathbb{I}_2 \|_{\rL^\infty(G)} \le C T^{\nicefrac{1}{p'}} \| v \|_{\E^v_{1,T}}$. 
In particular, there exists $T'$ sufficiently small such that
\begin{equation}\label{eq:est of nablaHX - Id in Linfty}
    \sup_{t \in [0,T']} \| \nablaH \rX - \mathbb{I}_2 \|_{\rL^\infty(G)} \le \frac{1}{2}.
\end{equation}
A Neumann series argument thus guarantees the invertibility of $\nablaH \rX(t,\cdot)$ for all $t \in [0,T']$. Thus  $\nablaH \mathrm{Y}(t,\cdot)$ exists on $[0,T']$.
At the same time, we conclude by \eqref{eq:diffeo X Euler Lagrange sea ice para-hyper} that $\dt \nablaH \rX(t,y_\H) = \nablaH \vbar(t,y_\H)$.
Therefore, 
\begin{equation}\label{eq:timederX}
    \| \dt \nablaH \rX(t,\cdot) \|_{\rL^p(0,T;\rH^{1,q}(G))} \le C  \| v \|_{\E^v_{1,T}} .
\end{equation}
In summary, there exists a constant $C > 0$ such that
\begin{equation}\label{eq:est of nablaH X in W1p and Linfty}
    \| \nablaH \rX \|_{\rH^{1,p}(0,T;\rH^{1,q}(G))} + \| \nablaH \rX \|_{\rL^\infty(0,T;\rH^{1,q}(G))} \le C.
\end{equation}
Since $p$, $q>2$ the spaces~$\rH^{1,p}(0,T;\rH^{1,q}(G))$ and  $\rL^\infty(0,T;\rH^{1,q}(G))$ are Banach algebras.
The estimate  \eqref{eq:est of nablaH X in W1p and Linfty} yields  the existence of a constant $C > 0$ such that 
\begin{equation*}
    \begin{aligned}
        \| \det \nablaH \rX \|_{\rW^{1,p}(0,T;\rH^{1,q}(G))} + \| \det \nablaH \rX \|_{\rL^\infty(0,T;\rH^{1,q}(G))} 
        &\le C \text{ and }\\
        \| \Cof \nablaH \rX \|_{\rW^{1,p}(0,T;\rH^{1,q}(G))} + \| \Cof \nablaH \rX \|_{\rL^\infty(0,T;\rH^{1,q}(G))} 
        &\le C.
    \end{aligned}
\end{equation*}
Thanks to \eqref{eq:est of nablaHX - Id in Linfty} we find that $\det \nablaH \rX \ge C > 0$ on $(0,T) \times G$ for some constant $C > 0$ provided $T \le T'$.
The representation
\begin{equation}\label{eq:ex of nablaH Y sea ice para-hyper}
    \mathrm{Z} = [\nablaH \rX]^{-1} = \frac{1}{\det \nablaH \rX} (\Cof \nablaH \rX)^\top
\end{equation}
then results in
\begin{equation*}
    \|  \mathrm{Z} \|_{\rH^{1,p}(0,T;\rH^{1,q}(G))} + \| \mathrm{Z} \|_{\rL^\infty(0,T;\rH^{1,q}(G))}\ \le C \text{ and }\ \| \mathrm{Z} - \mathbb{I}_2\|_{\rL^\infty(0,T;\rH^{1,q}(G))} \leq CT^{\nicefrac{1}{p'}}.
\end{equation*}

We now consider the change of variables defined  by
\begin{equation*}
    \begin{aligned}
            V(t,y_\H,z) := v (t, \mathrm{X}(t,y_\H),z), \ W(t,y_\H,z) := w(t,\rX(t,y_\H),z),   \   \zeta(t,y_\H) := \xi(t, \mathrm{X}(t,y_\H)).
      \end{aligned}
\end{equation*}
Note that by this change of variables the initial values remain unchanged. The transformed equations in the case of $\gamma=1$ then read as
\begin{equation}
	\left\{
	\begin{aligned}
       \dt \zeta + \xi_0 \divH \bar{V} &=F_1(\zeta, V), &&\text{ on }G \times (0,T), \\
       \dt  V - \frac{\mu \DeltaH V }{(1-\delta z)\xi_0}- \frac{\mu}{\xi_0} \dz \big ( \frac{1-\delta z}{\delta^2} \dz V \big ) -
       \frac{\mu' \nablaH \divH V}{(1-\delta z)\xi_0}  &= F_2(\zeta, V), &&\text{ on } \Omega \times (0,T),  \\
		  \partial_{z} \zeta  &= 0  , &&\text{ on } \Omega\times (0,T), \\ %\frac{1}{p} , &&\text{ on } \Omega \times (0,T),\\
        V&=0, &&\text{ on }\Gamma_u \times (0,T), \\
      \dz V &=0 , &&\text{ on } \Gamma_b \times (0,T), \\
        (\zeta(0), V(0)) &=(\xi_0, v_0),  
	\end{aligned}
	\right. 
	\label{eq:CAOatm}
\end{equation}
where the right-hand sides $F_1$ and  $F_2$ for $i=1,2$ are given by
\begin{equation}
    \begin{aligned}
        \label{eq:nonlinearities}
        F_1(\zeta,V)&= -(\zeta- \xi_0) \divH \bar{V} - \zeta \nablaH \bar{V} : \left [\mathrm{Z}^\top  - \mathbb{I}_2 \right ],\\
        (F_2(\zeta,V))_i&=\frac{\mu}{(1-\delta z)\xi_0}\Big (\sum_{l,j,k} \frac{\partial^2 V_i}{\partial y_l \partial y_k} (\mathrm{Z}_{k,j} - \delta_{k,j}) \mathrm{Z}_{l,j} +
        \sum_{l,k}\frac{\partial^2 V_i}{\partial y_l \partial y_k}\left (\mathrm{Z}_{l,k}- \delta_{k,l}\right) +  \sum_{l,j,k} \mathrm{Z}_{l,k} \frac{\partial V_i}{\partial y_k}
        \frac{\partial \mathrm{Z}_{k,j}}{\partial y_l} \Big) \\
        &\quad +\frac{\mu'}{(1-\delta z)\xi_0} \Big (\sum_{l,j,k} \frac{\partial^2 V_j}{\partial y_l \partial y_k}\left ( \mathrm{Z}_{k,j}- \delta_{k,j}\right)\mathrm{Z}_{l,i}  +
        \sum_{l,j}\frac{\partial^2 V_j}{\partial y_l \partial y_j} \left ( \mathrm{Z}_{l,i}- \delta_{k,i}\right) +
        \sum_{l,j,k} \mathrm{Z}_{l,i} \frac{\partial V_j}{\partial y_k} \frac{\partial \mathrm{Z}_{k,j}}{\partial y_l} \Big) \\
        & \quad +\big (1 -  \frac{\zeta}{\xi_0} \big )(\dt V )_i -\frac{\zeta}{\xi_0} \Big (\sum_{l,j}\tilde{V}_l \frac{\partial V_i}{\partial y_k} \mathrm{Z}_{l,j}
        -\frac{(1-\delta z)}{\delta} (W \dz V)_i \Big ) - \frac{ (\mathrm{Z}^\top \nablaH\zeta)_i}{\xi_0}. \\
        \end{aligned}
\end{equation}
Here we set  $\Tilde{V}:=V -\bar{V}$. The differences occurring in the above calculations for the case  $\gamma=2$ are being briefly discussed in the proof of \autoref{thm:localwpCAO}.

\begin{comment}
Next we define the set for the initial data. 
Let  $p,q \in (2,\infty)$, $1 \neq \frac{1}{p} + \frac{1}{2q}$ as well as $\frac{1}{2} \neq \frac{1}{p} + \frac{1}{2q}$. We set
\begin{equation*}
    \begin{aligned}\label{eq:initalspace}
        \rX^\zeta_\gamma &\coloneqq \left \{ \zeta_0 \in H^{1,q}(G;\R) \cap C(\overline{G};\R) \colon \min\limits\limits_{(x,y) \in \overline{G}} \zeta_0 >0  \right \} \\
        \rX^V_\gamma &\coloneqq \rB_{qp}^{2-\nicefrac{2}{p}}(\Omega;\R^2)), \quad \text{ as well as} \\ 
        \rX^V_{\gamma, \sigmabar} &\coloneqq \rB_{qp}^{2-\nicefrac{2}{p}}(\Omega;\R^2)) \cap \rLqOmegasigmabar.
    \end{aligned}
\end{equation*}
\end{comment}

\section{Linear Theory: The Hydrostatic Lam\'e Operator}\label{sec:lintheo}

We start by  analyzing the linear equation 
\begin{equation}
	\left\{
	\begin{aligned}
        \dt \zeta + \xi_0 \divH \bar{V} &=G_1, &&\text{ on }G \times (0,T), \\
	\zeta(0, \cdot)  &= \xi_0  , &&\text{ on } G, \\ %\frac{1}{p} , &&\text{ on } \Omega \times (0,T),\\
        \end{aligned}
	\right. 
	\label{eq:conteq3}
\end{equation}
for  the density $\zeta \colon G \rightarrow \R$ as well as the hydrostatic  Lam\'{e} equation
\begin{equation}\label{eq:Lame1}
	\left\{
	\begin{aligned}
          \dt  V - \frac{\mu \DeltaH V }{(1-\delta z)\xi_0}- \frac{\mu}{\xi_0} \dz \Big( \frac{1-\delta z}{\delta^2} \dz V \Big ) -  \frac{\mu' \nablaH \divH V}{(1-\delta z)\xi_0}
          +\omega V &=  G_2, &&\text{ on } \Omega \times (0,T),  \\
        V   &= 0  , &&\text{ on } \Gamma_u \times (0,T), \\ 
		 \dz V   &= 0  , &&\text{ on } \Gamma_b \times (0,T), \\ %\frac{1}{p} , &&\text{ on } \Omega \times (0,T),\\
         V(0, \cdot)  &=v_0 ,  &&\text{ on } \Omega,
	\end{aligned}
	\right. 
\end{equation}
for  the velocity $V \colon \Omega \rightarrow \R^2$. Here $0 < T \le \infty$ is given and $(\zeta, V)$ are subject to periodic boundary conditions on the lateral boundary.

We associate with the left hand side of \eqref{eq:Lame1} the hydrostatic Lam\'e operator as follows: Let $\xi_0 \in \rH_\per^{1,q}(G;\R)$ and assume that $\xi_0 \geq c$ for some $c>0$.
Define functions $a:\Omega \to \R$ and $b:\Omega \to \R$ by
\begin{equation}\label{def:ab}
a(x,y,z):= \frac{1}{(1-\delta z)\xi_0}\ \mbox{ and }\ b(x,y,z):=\frac{(1-\delta z)}{\delta^2 \xi_0}
\end{equation}
and define the  differential operator $\cA(x,y,z,D)$ by  
$$
\cA(x,y,z,D) v:= \mu a(x,y,z)\DeltaH v + \partial_z(\mu b(x,y,z)\partial_zv) + \mu' a(x,y,z) \nablaH \divH v,
$$
subject to boundary conditions
\begin{equation}\label{Abc}
v = 0 \text{ on } \Gamma_u \  \text{ and }\  \dz v = 0 \text{ on }\Gamma_b.  
\end{equation}

For $q>1$ consider the $\rL^q$-realization of $\cA(x,y,z,D)$ in $\rLq(\Omega;\R^2)$ subject to \eqref{Abc} defined by
\begin{equation*}
\begin{aligned}
A_{\mathrm{HL}}v&:=  \cA(x,y,z,D) v, \\  
\D(A_{\mathrm{HL}})&:=  \{ v \in \rH_\per^{2,q} (\Omega;\R^2) \colon \ v = 0 \text{ on } \Gamma_u \ \text{ and } \ \dz v = 0 \text{ on }\Gamma_b  \} 
\end{aligned}
\end{equation*}
We call  $A_{\mathrm{HL}}$ the {\em hydrostatic Lam\'e operator}.

Our aim is to show that $-A_{\mathrm{HL}}+\omega$ admits a bounded $\mathcal{H}^\infty$-calculus on $\rLq(\Omega;\R^2)$ of $\mathcal{H}^\infty$-angle strictly less than $\pi/2$ provided
$\omega$ is large enough. In order to do so, we define the operator $\mathcal{A}_1(z,D)$ and its $\rLq$-realization given by
\begin{equation*}
   \mathcal{A}_1(z,D) := (1-\delta z) \xi_0 \cdot \mathcal{A}(x,y,z,D) \ \text{ and }\ A_1 v := \mathcal{A}_1(z,D) v, \ \text{ with }\ \D(A_1)=\D(A_{\mathrm{HL}} ).
\end{equation*}
We then treat the hydrostatic Lam\'{e} operator $-A_{\mathrm{HL}}$ as multiplicative perturbation of $A_1$ using the results in \cite{HH:05}. To this end, we show that the principal part $\cA_{1,\#}(z,\xi)$ of the symbol $\cA_{1}(z,\xi)$ is parameter elliptic in the cylindrical domain $\Omega$. The key property in the context of cylindrical parameter ellipticity is that the operator $\cA_1(z,D)$ is \emph{cylindrical}, meaning that it completely resolves into parts of which one acts only on $G=(0,1)^2$ and the other acts only on $(0,1)$. The operator $\cA_1(z,D)$ is represented as
\begin{equation*}
    \cA_1(z,D) v=\mu  \DeltaH v + \mu '\nablaH \divH v + \mu\frac{(1-\delta z)^2}{\delta^2}\partial_{zz}v -\mu \frac{(1-\delta z)}{\delta}\dz v= \cA_2(D_\H) v + \cA_3(z,D_z) v,
\end{equation*}
where we define  $\cA_2(D_\H)$ as well as $\cA_3(z,D_z)$ by
\begin{equation*}
    \cA_2(D_\H) v :=\mu  \DeltaH v + \mu '\nablaH \divH v \ \text{ and }\ \cA_3(z,D_z) v :=\mu\frac{(1-\delta z)^2}{\delta^2}\partial_{zz}v -\mu \frac{(1-\delta z)}{\delta}\dz v.
\end{equation*}
The $\rLq(G;\R^2)$-realization of $\cA_2(D_\H)$ 
is defined by
\begin{equation*}
    \begin{aligned}
        A_2v &:=\mathcal{A}_2 (D_\H)v, \\
        \D(A_2) &:= \{ \rH^{2,q}(G;\R^2) \colon \ v \text{ periodic on } \partial G \}
    \end{aligned}
\end{equation*}
and coincides with the classical Lam\'{e} operator on $(0,1)^2$ subject to periodic boundary conditions. The $\rLq(0,1;\R^2)$-realization of $\cA_3(z,D_z)$ is defined as
\begin{equation*}
    \begin{aligned}
        A_3 v&:=  \cA_3(z,D_z) v, \\
\D(A_3)&:=  \{ v \in \rH^{2,q} (0,1;\R^2) \colon \ v|_{z=1} = 0  \ \text{ and } \ \dz v|_{z=0}  = 0  \} .
    \end{aligned}
\end{equation*}
We now say that the principal part $-\cA_{1,\#}(z,\xi)$ is parameter elliptic of angle $\Phi_{-\cA_1} =\max \{ \Phi_{-\cA_2}, \Phi_{-\cA_3}  \}$ in the cylindrical domain $\Omega$ if and only if the principal part of the two dimensional Lam\'{e} operator $-\cA_{2,\#}(\xi_\H)$ is parameter elliptic of angle $\Phi_{-\cA_2}$ in the sense of \cite[Definition~7.4]{N:12} and the boundary value problem induced by $A_3$ is parameter elliptic of angle $\Phi_{-\cA_3}$ in the sense of \cite[Definition~8.2]{N:12}, meaning that the principal part $-\cA_{3,\#}(z,\xi_3)$ is parameter elliptic of angle $\Phi_{-\cA_3}$ for all $z\in (0,1)$ and the boundary value problem induced by $A_3$ satisfies the Lopatinski-Shapiro condition for all $\Phi > \Phi_{-\cA_3}$.

For more information on bounded
$\mathcal{H}^\infty$-calculus and parameter elliptic operators in the classical and in the cylindrical sense, we refer to \cite{Ama:95,DHP:03,DDHPV:04,N:12,DN:13,HNVW:16,GGHHK:17,GGHHK:20b,PS:16}.  

\begin{lem}\label{thm:elliptic}
 Let $q \in (1,\infty)$. The principal part $-\cA_{1,\#}(z,\xi)$ is parameter elliptic of angle $\Phi_{-\cA_1}=0$ in the cylindrical domain $\Omega = G \times (0,1)$.
% \item The principal part of $-A_1$ subject to mixed boundary conditions \eqref{eq:bc} satisfies the Lopatinskii-Shapiro condition.
 Moreover, there exists $\omega_0 \in \R$ such that for all $\omega > \omega_0$ the
  operator $-A_1+ \omega$ admits a bounded $\mathcal{H}^\infty$-calculus on $\rLq(\Omega;\R^2)$ with $\mathcal{H}^\infty$-angle $\Phi^\infty_{-A_1 +\omega} < \pi/2$.
 
\end{lem}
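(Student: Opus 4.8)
The plan is to exploit the cylindrical decomposition $\cA_1(z,D) = \cA_2(D_\H) + \cA_3(z,D_z)$ recorded above and to verify, piece by piece, the hypotheses of the $\mathcal{H}^\infty$-calculus theorem for parameter-elliptic cylindrical boundary value problems in \cite{N:12}. First I would compute the principal symbol of the two-dimensional Lamé part: for $\xi_\H\in\R^2$ one has $\cA_{2,\#}(\xi_\H) = -\mu|\xi_\H|^2\,\mathbb{I}_2 - \mu'\,\xi_\H\xi_\H^\top$, whose eigenvalues are $-\mu|\xi_\H|^2$ (on $\xi_\H^\perp$) and $-(\mu+\mu')|\xi_\H|^2$ (on $\R\xi_\H$). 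Since $\mu>0$ and $\mu+\mu'>0$, both are strictly negative for $\xi_\H\neq0$, so the spectrum of $-\cA_{2,\#}(\xi_\H)$ lies in $(0,\infty)$; this is exactly parameter ellipticity of angle $\Phi_{-\cA_2}=0$ in the sense of \cite[Definition~7.4]{N:12}, and the periodic boundary condition on $\partial G$ is harmless since $A_2$ is the classical Lamé operator on the torus.

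Next I would treat the one-dimensional operator $A_3$. Because $\delta = 1-\mre^{-1}\in(0,1)$, the coefficient $1-\delta z$ is bounded above and below by positive constants on $[0,1]$, so the principal part $\cA_{3,\#}(z,\xi_3) = -\mu\tfrac{(1-\delta z)^2}{\delta^2}\xi_3^2\,\mathbb{I}_2$ is a strictly negative scalar multiple of the identity for $\xi_3\neq0$; hence $-\cA_{3,\#}(z,\cdot)$ is parameter elliptic of angle $0$ for every $z\in(0,1)$, the term $-\mu\tfrac{1-\delta z}{\delta}\dz$ being a lower-order contribution that does not enter the symbol. It then remains to check that the boundary value problem induced by $A_3$ — Dirichlet at $z=1$, Neumann at $z=0$ — satisfies the Lopatinski--Shapiro condition for every angle $\Phi>0$. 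As the operator acts diagonally, this reduces to the scalar half-line model problems $\lambda u - c\,u''=0$ with $c>0$ and $\lambda\in\C\setminus(-\infty,0]$: the unique solution decaying at infinity is $u(t)=\alpha\,\mre^{-\sqrt{\lambda/c}\,t}$ with $\re\sqrt{\lambda/c}>0$, and imposing $u(0)=0$ (Dirichlet) resp. $u'(0)=0$ (Neumann) forces $\alpha=0$. Thus the Lopatinski--Shapiro condition holds for all admissible $\lambda$, so in the sense of \cite[Definition~8.2]{N:12} the boundary value problem induced by $A_3$ is parameter elliptic of angle $\Phi_{-\cA_3}=0$.

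Combining the two steps yields $\Phi_{-\cA_1}=\max\{\Phi_{-\cA_2},\Phi_{-\cA_3}\}=0$, which is the first assertion. For the second assertion the plan is to invoke the $\mathcal{H}^\infty$-calculus result of \cite{N:12} for parameter-elliptic cylindrical boundary value problems: since $\cA_1(z,D)$ is cylindrical with principal part parameter elliptic of angle $0$ and coefficients of the required regularity, the shifted operator $-A_1+\omega$ admits, for $\omega$ sufficiently large, a bounded $\mathcal{H}^\infty$-calculus on $\rLq(\Omega;\R^2)$ whose $\mathcal{H}^\infty$-angle can be chosen arbitrarily close to $\Phi_{-\cA_1}=0$, in particular strictly less than $\pi/2$. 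The symbol computation and the decay-solution count are routine; the one point requiring care — and thus the main, if mild, obstacle — is matching the mixed Dirichlet/Neumann and horizontally periodic setting precisely to the framework of \cite{N:12} and confirming that the lower-order $z$-coefficient affects neither the ellipticity nor the Lopatinski--Shapiro conditions. This lemma will then serve as the stepping stone for treating $-A_{\mathrm{HL}}$ as a multiplicative perturbation of $A_1$ via \cite{HH:05}.
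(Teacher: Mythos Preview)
Your proposal is correct and follows essentially the same approach as the paper: verify parameter ellipticity of $-\cA_2$ via the eigenvalue computation for the Lam\'e symbol, verify parameter ellipticity of $-\cA_3$ together with the Lopatinski--Shapiro condition for the mixed Dirichlet/Neumann data, and then invoke the cylindrical $\mathcal{H}^\infty$-calculus theorem of \cite{N:12}. The only cosmetic differences are that the paper works with the discrete symbol $k_\H\in\Z^2$ (matching the periodic setting) rather than $\xi_\H\in\R^2$, dispatches the Lopatinski--Shapiro condition by observing that $-\cA_{3,\#}$ is normally strongly elliptic with standard boundary operators rather than by your explicit half-line computation, and cites \cite{DDHPV:04} alongside \cite{N:12} for the final step.
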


\begin{proof}
Consider the two dimensional Lam\'{e} operator $-A_2$ on $G=(0,1)^2$ subject to periodic boundary conditions. Its discrete symbol is given by
\begin{equation*}
    -\cA_{2,\#}(k_\H) = \begin{pmatrix}
        \mu |k_\H|^2 + \mu' k_1^2 & \mu' k_1 k_2 \\
         \mu' k_1 k_2 & \mu |k_\H|^2 + \mu' k_2^2
    \end{pmatrix},\ k_\H \in \Z^2.
\end{equation*}
Its eigenvalues $\lambda_{1/2}(k_\H)$ of $-\cA_{2,\#}(k_\H)$ are computed  to be 
    \begin{equation*}
        \begin{aligned}
            \lambda_{1/2}(k_\H) =\frac{2\mu + \mu'}{2} |k_\H|^2 \pm \frac{\mu'}{2} |k_\H|^2, \quad k_\H \in \Z^2.
        \end{aligned}
      \end{equation*}
      We see that $\lambda_{1/2}(k_\H)>0$ for all $k_\H \in \Z^2 \setminus \{ 0\}$ and it thus follows that $-\cA_{2,\#}(k_\H)$ is parameter elliptic of angle $\Phi_{-\cA_2}=0$ in the
      sense of \cite[Definition~7.4]{N:12}.
Next, consider the function $b_1 \colon \Omega \to \R$ defined by
\begin{equation*}
    b_1(x,y,z):=b_1(z):= \frac{(1-\delta z)^2}{\delta^2}.
\end{equation*}
Since $b_1$ is continuous, it satisfies the smoothness assumptions needed for applying \cite{DHP:03}. The principal part $-\cA_{3,\#}(z,\xi_3)$ is given by $-\cA_{3,\#}(z,\xi_3)= -b_1(z) \xi_3^2$
for all $z \in (0,1)$ and all $\xi_3 \in \R$. Hence, $-\cA_{3,\#}(z,\xi_3)$ is parameter elliptic of angle $\Phi_{-\cA_3}=0$ and even normally strongly elliptic for all $z\in (0,1)$. In view of the standard boundary conditions involved, it follows that the boundary value problem associated to $-A_3$ is normally elliptic and satisfies the conditions in \cite{DHP:03}. It thus
follows that $-\cA_{1,\#}(z,\xi)$ is parameter elliptic of angle $\Phi_{-\cA_1}=0$.

To establish the boundedness of the  $\mathcal{H}^\infty$-calculus of $-A_1$ up to a shift, we note that the lower order coefficient of $A_1$ only depends on the vertical variable and in
particular belongs to $\rL^\infty(\Omega)$. Moreover, the coefficients of the
corresponding boundary operator are even constant. 
\begin{comment}

Hence the system 
\begin{equation*}
    \left \{
    \begin{aligned}
        \dt V - A_1 V + \omega V &= F,  &&\text{ on } \Omega \times (0,T),  \\
        V   &= 0  , &&\text{ on } \Gamma_u \times (0,T), \\ 
		 \dz V   &= 0  , &&\text{ on } \Gamma_b \times (0,T), \\ %\frac{1}{p} , &&\text{ on } \Omega \times (0,T),\\
         V(0, \cdot)  &=V_0 ,  &&\text{ on } \Omega,
    \end{aligned}
    \right.
\end{equation*}
is a normally elliptic boundary value problem  in the sense of \cite{DDHPV:04} and \cite{DHP:03}.
\end{comment}
 The assertion thus follows from the the H\o"lder continuity of  $b_1$ and the results in \cite[Section~8]{N:12} and \cite{DDHPV:04}.
\end{proof} 

We are now in position to apply \cite[Theorem~3.1]{HH:05} to establish the boundedness of the  $\mathcal{H}^\infty$-calculus of hydrostatic Lam\'{e} operator $-A_{\mathrm{HL}}$ up to a shift.

\begin{prop}\label{prop:Hunendlich}
Let $q \in (2,\infty)$ and assume that $\xi_0 \in \rH_\per^{1,q}(G;\R)$ satisfies $\xi_0 \geq c >0$ in $G$ for some $c>0$.  Then there exists $\omega_0 \in \R$ such that for all $\omega > \omega_0$ the
  operator $-A_{\mathrm{HL}}+ \omega$ admits a bounded $\mathcal{H}^\infty$-calculus on $\rLq(\Omega;\R^2)$ with $\mathcal{H}^\infty$-angle $\Phi^\infty_{-A_{\mathrm{HL}} +\omega} < \pi/2$.  
\end{prop}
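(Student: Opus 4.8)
The plan is to realise $A_{\mathrm{HL}}$ as a multiplicative perturbation of the operator $A_1$ from \autoref{thm:elliptic} and then to invoke \cite[Theorem~3.1]{HH:05}. By the very definition of $A_1$ one has $\cA(x,y,z,D) = a(x,y,z)\,\cA_1(z,D)$ with $a(x,y,z) = ((1-\delta z)\xi_0(x,y))^{-1}$, and since $\D(A_1) = \D(A_{\mathrm{HL}})$ this means $A_{\mathrm{HL}} = M_a A_1$, where $M_a$ is the multiplication operator induced by $a$. By \autoref{thm:elliptic} there is $\omega_0 \in \R$ such that $-A_1 + \omega$ admits a bounded $\mathcal{H}^\infty$-calculus of angle $< \pi/2$ on $\rLq(\Omega;\R^2)$ for every $\omega > \omega_0$, so what remains is to transfer this property across the multiplicative perturbation $M_a$.

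Before invoking \cite[Theorem~3.1]{HH:05}, I would verify that $a$ is an admissible, nondegenerate multiplier. Since $\delta = 1 - \mre^{-1} \in (0,1)$, we have $\mre^{-1} \le 1 - \delta z \le 1$ for $z \in [0,1]$, so $z \mapsto (1-\delta z)^{-1}$ is smooth on $[0,1]$ and lies between $1$ and $\mre$. Because $q > 2$ and $G \subset \R^2$, the Sobolev embedding $\rH_\per^{1,q}(G) \hookrightarrow \rC^{0,1-2/q}(\bar{G})$ shows that $\xi_0$ is H\"older continuous and bounded; combined with the hypothesis $\xi_0 \ge c > 0$, this yields $\xi_0^{-1} \in \rC^{0,1-2/q}(\bar{G})$, bounded above and below by positive constants. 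Hence $a \in \rC^{0,1-2/q}(\bar{\Omega})$ and $0 < c_a \le a \le C_a$ on $\Omega$ for suitable constants. Since multiplication by the scalar function $a$ acts componentwise on $\R^2$-valued functions and $\rLq(\Omega)$ is a UMD space for $q \in (2,\infty)$, the vector-valued setting adds no difficulty, and $a$ meets the regularity and positivity requirements of \cite[Theorem~3.1]{HH:05}.

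Applying \cite[Theorem~3.1]{HH:05} to $-A_1$, with the shift $\omega_0$ furnished by \autoref{thm:elliptic} and the multiplier $a$, then produces some $\omega_0' \ge \omega_0$ such that for every $\omega > \omega_0'$ the operator $M_a(-A_1) + \omega = -A_{\mathrm{HL}} + \omega$ has a bounded $\mathcal{H}^\infty$-calculus on $\rLq(\Omega;\R^2)$ with $\mathcal{H}^\infty$-angle $< \pi/2$; relabelling $\omega_0'$ as $\omega_0$ gives the statement. The main point requiring care — and, I expect, the only nonroutine step — is checking that the merely H\"older-continuous coefficient $a$ (which inherits precisely the regularity of $\xi_0 \in \rH_\per^{1,q}(G)$, and no more) satisfies the exact multiplier hypotheses of \cite[Theorem~3.1]{HH:05}, together with the bookkeeping of how the scalar shift $\omega$ propagates through the perturbation; identification of the domains, the positivity bounds, and the reduction to the scalar (componentwise) case are all straightforward.
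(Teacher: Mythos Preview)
Your approach is essentially the same as the paper's: both write $A_{\mathrm{HL}} = M_a A_1$ with $a = ((1-\delta z)\xi_0)^{-1}$, use the Sobolev embedding $\rH^{1,q}_\per(G) \hookrightarrow \rC^{0,1-2/q}(\bar G)$ to obtain $a \in \mathrm{BUC}^\rho(\bar\Omega)$ bounded above and below, and then apply \cite[Theorem~3.1]{HH:05} to transfer the bounded $\mathcal{H}^\infty$-calculus from $-A_1+\omega_1$ to $-A_{\mathrm{HL}}+\omega$. The paper is slightly more explicit on two points you flag as routine: it records that the multiplication operator $M_a$ itself has a bounded $\mathcal{H}^\infty$-calculus of angle $0$ (needed as a hypothesis in \cite{HH:05}), and it writes out the commutator estimate $\|[M_a,(-A_1+\omega_1+\mu_1)^{-1}](\mu_2+M_a)^{-1}\| \le C(1+|\mu_2|)^{-(1-\alpha)}|\mu_1|^{-(1+\beta)}$ together with the decomposition $\omega - A_{\mathrm{HL}} = \omega + M_a(-A_1+\omega_1) - M_a\omega_1$ to handle the shift; but these are exactly the steps you identify as ``the main point requiring care'' and ``bookkeeping'', so there is no substantive difference.
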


\begin{proof}
A Sobolev embedding yields that  
  $((1-\delta z) \cdot \xi_0)^{-1} \in \mathrm{BUC}^\rho(\bar{\Omega})$ for some  $\rho \in (0,1)$. Define the multiplication operator $M$ on $\rLq(\Omega;\R^2)$ by 
  \begin{equation*}
      (M v)(x,y,z) := ((1-\delta z) \cdot \xi_0)^{-1} v(x,y,z), \ \text{ with }\ \D(M) :=\rLq(\Omega;\R^2) .
  \end{equation*}
  Then $M$ is boundedly invertible and admits a $\mathcal{H}^\infty$-calculus of angle $\Phi^\infty_M=0$ on $\rLq(\Omega;\R^2)$. We then write 
  \begin{equation*}
      \omega-A_{\mathrm{HL}} = \omega + M \cdot ( -A_1 +\omega_1) - M\cdot \omega_1,
  \end{equation*}
  where $\omega_1\geq 0$ denotes the shift obtained from \autoref{thm:elliptic} and the product is defined as in \cite{HH:05}. The commutator estimate %given in \cite[Theorem~3.1]{HH:05} 
  \begin{equation*}
      \| [M,(-A_1+\omega_1+\mu_1)^{-1}]( \mu_2+M)^{-1} \| \leq \frac{C}{(1+|\mu_2|)^{1-\alpha} |\mu_1|^{1+\beta}},\ \text{ for all }\ (\mu_1,\mu_2) \in \Sigma_{\Phi^\infty_{-A_1+\omega_1}} \times \Sigma_\pi,
  \end{equation*} 
  holds true whenever  $C,\alpha\geq 0$, $\beta >0$ and $\alpha+\beta<1$. Hence, there exists $\omega_0 \in \R$ such that the operator $\omega +M\cdot ( -A_1 +\omega_1)$ admits a
  bounded $\mathcal{H}^\infty$-calculus of angle strictly less than $\pi/2$ on $\rLq(\Omega;\R^2)$ for all $\omega >\omega_0$. The assentation then follows from  \cite[Theorem~3.1]{HH:05}. 
\end{proof}

The above \autoref{prop:Hunendlich} implies maximal $\rL^p-\rL^q$-regularity estimates  for the solution of \eqref{eq:Lame1}. We verify that the trace space
$\rX_\gamma^v = (\rX_0^v,\rX_1^v)_{1-1/p}$ for  $\rX_0^v=\rLq(\Omega;\R^2)$ and $\rX_1^v= \mathrm{Y}$ is characterized by  
\begin{equation*}
		\rX_\gamma^v \coloneqq \left \{
    \begin{aligned}
      \{ &v_0 \in\rB_{qp,\per}^{2(1-\nicefrac{1}{p})}(\Omega;\R^2) \colon (\dz v_0)|_{\Gamma_b} =0, \ v_0|_{\Gamma_u} = 0  \}, \quad &&\frac{1}{2} + \frac{1}{2q} + \frac{1}{p} < 1, \\
    	\{ & v_0 \in  \rB_{qp,\per}^{2(1-\nicefrac{1}{p})}(\Omega;\R^2) \colon  v_0|_{\Gamma_u} = 0 \}, \quad &&\frac{1}{2q}+ \frac{1}{p} < 1, \\
    	 & \rB_{qp,\per}^{2(1-\nicefrac{1}{p})}(\Omega;\R^2), \quad &&1 <  \frac{1}{2q}+ \frac{1}{p}.
    \end{aligned}
 	\right.
\end{equation*}
We then obtain the following maximal regularity estimate for the equation \eqref{eq:Lame1}.

\begin{cor}\label{lem:Lame equation}
Let $p,q \in (2,\infty)$, $G_2 \in \E^v_{0,\infty}$ and $v_0$ as in assumption (A). Then there exists $\omega \ge 0$ such that the equation \eqref{eq:Lame1} admits a unique, strong solution
 \begin{equation*}
    \begin{aligned}
        V &\in \E^v_{1,\infty} := \rH^{1,p}(\R_+;\rLq(\Omega;\R^2)) \cap \rLp(\R_+;\mathrm{Y}),
   \end{aligned}
\end{equation*}
satisfying 
\begin{equation*}
    \| V \|_{\E_{1,\infty}} \le C\bigl(\| G_2\|_{\E^v_{0,\infty}}  + \| v_0 \|_{ \rB_{qp}^{2(1-\nicefrac{1}{p})}(\Omega))}\bigr)
\end{equation*}
for some $C>0$. In particular, the solution operator 
\begin{equation} 
	S_V \colon \E^v_{0,\infty} \times  \rX_\gamma^v  \to \E^v_{1,\infty}, \quad (G_2,v_0) \mapsto V \label{eq:SV}
\end{equation}  
is an isomorphism. 
\end{cor}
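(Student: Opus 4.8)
The plan is to deduce the corollary from \autoref{prop:Hunendlich} by the standard operator-theoretic route to maximal $\rL^p$-regularity. Fix $\omega := \max\{\omega_0,0\}+1$, with $\omega_0$ as in \autoref{prop:Hunendlich}, and set $B := \omega - A_{\mathrm{HL}}$, so that $\D(B) = \D(A_{\mathrm{HL}}) = \mathrm{Y}$ and \eqref{eq:Lame1} becomes the abstract Cauchy problem $\dt V + B V = G_2$ on $\R_+$, $V(0)=v_0$, in $\rLq(\Omega;\R^2)$. By \autoref{prop:Hunendlich}, $B$ admits a bounded $\mathcal{H}^\infty$-calculus on the UMD space $\rLq(\Omega;\R^2)$ of $\mathcal{H}^\infty$-angle strictly less than $\pi/2$; since $\omega$ exceeds the sectoriality shift, $0$ lies in the resolvent set of $B$, so $B$ is invertible, and on the Banach function space $\rLq(\Omega;\R^2)$ the bounded $\mathcal{H}^\infty$-calculus of angle $<\pi/2$ entails $\mathcal{R}$-sectoriality of the same angle.

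Next I would invoke the characterization of maximal $\rL^p$-regularity and its half-line version (see \cite{DHP:03,PS:16}): an invertible $\mathcal{R}$-sectorial operator of angle $<\pi/2$ on a UMD space has maximal $\rL^p$-regularity on $[0,\infty)$. Hence for every $G_2 \in \E^v_{0,\infty}$ and every $v_0$ in the real interpolation space $(\rLq(\Omega;\R^2),\mathrm{Y})_{1-\nicefrac{1}{p},p}$ the Cauchy problem possesses a unique solution $V \in \E^v_{1,\infty}$ satisfying the estimate claimed in the corollary, with a constant independent of the data. The point that the estimate is uniform on the \emph{unbounded} time interval is exactly what requires the invertibility of $B$, as opposed to mere sectoriality; this is why the shift $\omega$ is taken strictly above $\omega_0$.

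It remains to identify the data space. Assumption (A) excludes the two exponent values $1 = \frac{1}{p}+\frac{1}{2q}$ and $\frac{1}{2}=\frac{1}{p}+\frac{1}{2q}$, which are precisely the thresholds at which the traces on $\Gamma_u$, respectively on $\Gamma_b$, of functions in $\rB^{2(1-\nicefrac{1}{p})}_{qp,\per}(\Omega;\R^2)$ change from being well-defined constraints to being vacuous; at the remaining exponents one has $(\rLq(\Omega;\R^2),\mathrm{Y})_{1-\nicefrac{1}{p},p}=\rX_\gamma^v$ with the three-case description recorded above the statement, and every $v_0$ as in assumption (A) therefore lies in $\rX_\gamma^v$. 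Consequently $S_V\colon\E^v_{0,\infty}\times\rX_\gamma^v\to\E^v_{1,\infty}$ is well defined and bounded, injective by the uniqueness just obtained, and surjective since any $V\in\E^v_{1,\infty}$ equals $S_V(\dt V+BV,\,V(0))$ with $(\dt V+BV,V(0))\in\E^v_{0,\infty}\times\rX_\gamma^v$ (the trace $V(0)\in\rX_\gamma^v$ being part of the interpolation-space characterization). By the open mapping theorem $S_V$ is then an isomorphism. Beyond quoting \autoref{prop:Hunendlich}, the only step carrying real content — and the one I expect to be the main obstacle — is this trace-space identification, i.e.\ checking that the real interpolant of $\rLq(\Omega;\R^2)$ and $\mathrm{Y}$ retains exactly the boundary conditions defining $\rX_\gamma^v$ at the admissible exponents.
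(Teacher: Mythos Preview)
Your proposal is correct and matches the paper's approach: the paper does not give a separate proof of this corollary but simply states that \autoref{prop:Hunendlich} implies maximal $\rL^p$-$\rL^q$-regularity for \eqref{eq:Lame1} and records the trace-space identification $\rX_\gamma^v=(\rLq(\Omega;\R^2),\mathrm{Y})_{1-1/p,p}$ immediately before the statement. Your argument spells out precisely this standard route---bounded $\mathcal{H}^\infty$-calculus of angle $<\pi/2$ on a UMD space $\Rightarrow$ $\mathcal{R}$-sectoriality $\Rightarrow$ maximal regularity on $\R_+$ via invertibility of the shifted operator, together with the interpolation characterization of the initial-data space---so there is nothing to add.
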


Furthermore, we obtain the following well-posedness result for equation \eqref{eq:conteq3}.

\begin{cor}\label{cor:conteq}
Let $p,q \in (2,\infty)$ and  $V \in \E^v_{1,\infty}$ be the unique solution of \eqref{eq:Lame1}. Assume that  $G_1 \in \rLp(\R_+;\rH_\per^{1,q}(G;\R))$ and $\xi_0 \in \rH_\per^{1,q}(G;\R)$. Then \eqref{eq:conteq3}
admits  a unique, strong solution
\begin{equation*}
        \zeta \in \rH^{1,p}(\R_+; \rH_\per^{1,q}(G;\R))
    \end{equation*}
satisfying 
    \begin{equation*}
        \| \zeta \|_{\rH^{1,p}(\R_+; \rH^{1,q}(G,\R))} \le C \big ( \| G_1 \|_{\rLp(\R_+; \rH^{1,q}(G,\R))}+ \| V \|_{\E^v_{1,\infty}} + \| \xi_0 \|_{\rH^{1,q}(G)} \big )
    \end{equation*}
for some $C>0$. 
  \end{cor}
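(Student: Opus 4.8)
The plan is to read \eqref{eq:conteq3} as a linear inhomogeneous ordinary differential equation with values in the Banach space $X := \rH_\per^{1,q}(G;\R)$. Given the solution $V \in \E^v_{1,\infty}$ of \eqref{eq:Lame1}, set
\[
h(t) := G_1(t) - \xi_0\,\divH \bar V(t),
\]
so that \eqref{eq:conteq3} becomes $\dt \zeta(t) = h(t)$ on $\R_+$ with $\zeta(0) = \xi_0$, whose unique solution is the Bochner integral $\zeta(t) = \xi_0 + \int_0^t h(s)\,\d s$. Uniqueness is immediate, since the difference of two solutions solves the homogeneous equation with zero initial datum and therefore vanishes by the fundamental theorem of calculus for Bochner integrals.

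Hence the corollary reduces to the single estimate $\|h\|_{\rLp(\R_+;X)} \le C\big(\|G_1\|_{\rLp(\R_+;X)} + \|V\|_{\E^v_{1,\infty}}\big)$: once this is known, $\dt \zeta = h \in \rLp(\R_+;X)$ and the integral representation of $\zeta$ together with $\zeta(0) = \xi_0$ yield the claimed membership $\zeta \in \rH^{1,p}(\R_+;X)$ and its norm bound by routine manipulation. The term $G_1$ is in $\rLp(\R_+;X)$ by assumption. For $\divH\bar V$, I would observe that the vertical-average operator $V \mapsto \bar V = \int_0^1 V(\cdot,\cdot,z)\,\d z$ maps $\rH_\per^{2,q}(\Omega;\R^2)$ boundedly into $\rH_\per^{2,q}(G;\R^2)$: it commutes with the horizontal derivatives, and $g \mapsto \int_0^1 g(\cdot,\cdot,z)\,\d z$ is bounded from $\rL^q(\Omega)$ to $\rL^q(G)$ by Hölder's inequality. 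Since $V \in \rLp(\R_+;\mathrm{Y}) \subset \rLp(\R_+;\rH_\per^{2,q}(\Omega;\R^2))$ by definition of $\E^v_{1,\infty}$, it follows that $\divH\bar V \in \rLp(\R_+;X)$ with $\|\divH\bar V\|_{\rLp(\R_+;X)} \le C\|V\|_{\E^v_{1,\infty}}$.

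The only step with genuine content is the product $\xi_0 \cdot \divH\bar V$, and here the hypothesis $q > 2$ enters: since $\dim G = 2$ we have $\rH^{1,q}(G) \hookrightarrow \rL^\infty(G)$, so $X = \rH_\per^{1,q}(G;\R)$ is a Banach algebra and $\|\xi_0 f\|_X \le C\|\xi_0\|_X\|f\|_X$ for every $f \in X$. Applying this pointwise in time with $f = \divH\bar V(t)$ gives $\xi_0\,\divH\bar V \in \rLp(\R_+;X)$ with norm at most $C\|\xi_0\|_X\|V\|_{\E^v_{1,\infty}}$. Summing the three contributions produces the desired bound on $h$, and with it the corollary. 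I expect this Banach-algebra product estimate to be the only real obstacle; everything else is the soft theory of Banach-space-valued linear ODEs.
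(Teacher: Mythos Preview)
Your approach is exactly the one the paper has in mind; the corollary is stated without proof there, and the intended argument is precisely to integrate the linear ODE in the Banach algebra $X=\rH_\per^{1,q}(G)$ and use that $\xi_0\,\divH\bar V\in\rLp(\,\cdot\,;X)$ via the multiplication estimate you describe.

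There is, however, one step you should not wave through as ``routine manipulation'': passing from $\dt\zeta=h\in\rLp(\R_+;X)$ to $\zeta\in\rH^{1,p}(\R_+;X)$ on the \emph{unbounded} half-line. Your formula $\zeta(t)=\xi_0+\int_0^t h$ shows why: already for $h\equiv 0$ one gets $\zeta\equiv\xi_0$, and a nonzero constant is not in $\rLp(\R_+;X)$. So the bound $\|\zeta\|_{\rLp(\R_+;X)}\le C(\|\xi_0\|_X+\|h\|_{\rLp(\R_+;X)})$ simply fails as written. On a finite interval $(0,T)$ the estimate \emph{is} routine: H\"older gives $\|\zeta(t)\|_X\le\|\xi_0\|_X+t^{1/p'}\|h\|_{\rLp(0,T;X)}$, hence $\|\zeta\|_{\rLp(0,T;X)}\le T^{1/p}\|\xi_0\|_X+C_T\|h\|_{\rLp(0,T;X)}$, and this is exactly how the paper uses the corollary in \autoref{sec:localwp} (compare the bounds \eqref{eq:zetaestimate}). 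The appearance of $\R_+$ in the statement is an imprecision carried over from \autoref{lem:Lame equation}, where a spectral shift $\omega>0$ supplies the missing decay; the continuity equation has no such mechanism. You should state and prove the estimate on $(0,T)$ with constants depending on $T$, rather than asserting the half-line version.
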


\section{Local-wellposedness}\label{sec:localwp}

The proof of \autoref{thm:localwpCAO} is based on the estimates given in \autoref{prop:Hunendlich} and on the estimates on the  nonlinearities $F_1$ and $F_2$
given in the sequel.

\begin{lem}\label{lem:nonlinearestimates}
Let $\tau<1$ and  $p,q \in (2,\infty)$. Then there exist $T \in (0,\tau]$, a constant $\delta >0$, depending only on $p$ and $q$,  as well as a constant $C>0$, depending only on $p$, $q$ and  $\tau$,
such that 
        \begin{equation*}
        \begin{aligned}
            \|F_1(\zeta,V) \|_{\rLp(0,T;\rH^{1,q}(G))} \leq T^{\delta} \| V \|_{\E^v_{1,T}} \quad \text{ and } \quad  
            \|F_2(\zeta,V) \|_{\E^v_{0,T}} \leq C\big (T^\delta+ T^\delta \| V \|_{\E^v_{1,T}}+ \| V \|^2_{\E^v_{1,T}}\big ),\\
           % \| F_3(\Pi, V^\ocn) \|_{\E^\ocn_{0,T}} &\leq C\left (T^\delta+ T^\delta \| V^\ocn \|_{\E^\ocn_{1,T}}+ \| V^\ocn \|^2_{\E^\ocn_{1,T}}\right )\quad \text{ and }\\
           % \| F_4 (V^\ocn) \|_{\mathbb{G}_T} &\leq C \left ( T^\delta \| V^\ocn \|_{\E_{1,T}^\ocn} + \| V^\ocn \|^2_{\E_{1,T}^\ocn} \right ).
        \end{aligned}
        \end{equation*}
whenever  $(\zeta, V) \in \E_{1,T}$.
\end{lem}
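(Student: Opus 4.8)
The strategy I would follow is to expand $F_1$ and $F_2$ into their individual monomials and estimate each one separately, using the following ingredients. First, from \autoref{subsec:lagrangecor} we have $\|\mathrm{Z}\|_{\rH^{1,p}(0,T;\rH^{1,q}(G))}+\|\mathrm{Z}\|_{\rL^\infty(0,T;\rH^{1,q}(G))}\le C$ and $\|\mathrm{Z}-\mathbb{I}_2\|_{\rL^\infty(0,T;\rH^{1,q}(G))}\le CT^{1/p'}$; moreover, since $\nablaH\rX(0,\cdot)=\mathbb{I}_2$ is constant, $\nablaH\mathrm{Z}$ has vanishing trace at $t=0$, so also $\|\nablaH\mathrm{Z}\|_{\rL^\infty(0,T;\rLq(G))}\le CT^{1/p'}$. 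Second, for $p,q>2$ the spaces $\rH^{1,q}(G)$, $\rL^\infty(0,T;\rH^{1,q}(G))$ and $\rH^{1,p}(0,T;\rH^{1,q}(G))$ are Banach algebras; one has $\rW^{1,q}(G)\hookrightarrow\rL^\infty(G)$, the anisotropic embedding $\rH^{2,q}(\Omega)\hookrightarrow\rL^\infty(0,1;\rW^{1,q}(G))$, the maximal-regularity embedding $\E^v_{1,T}\hookrightarrow\rC([0,T];\rX^v_\gamma)\hookrightarrow\rL^\infty(0,T;\rH^{1,q}(\Omega))$, and the maps $v\mapsto\bar v$, $v\mapsto\tilde v=v-\bar v$ are bounded on $\rH^{k,q}(\Omega)$ into $\rH^{k,q}(G)$, resp.\ $\rH^{k,q}(\Omega)$. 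Third, from \eqref{eq:w}, $\zeta W=-\int_0^z\divH(\zeta\tilde V)\d z'$, so using $\zeta\ge c>0$ and $\xi_0\in\rH^{1,q}(G)$ one gets $\|W\|_{\rL^\infty(0,1;\rLq(G))}\le C\|\zeta\|_{\rH^{1,q}(G)}\|V\|_{\rH^{1,q}(\Omega)}$. Finally, since $\zeta(0)=\xi_0$ with $M_1\le\xi_0\le M_2$, one has $\|\zeta-\xi_0\|_{\rL^\infty(0,T;\rH^{1,q}(G))}\le T^{1/p'}\|\dt\zeta\|_{\rL^p(0,T;\rH^{1,q}(G))}$, and $\|\zeta\|_{\rL^\infty(0,T;\rH^{1,q}(G))}$, $\|1/\xi_0\|_{\rH^{1,q}(G)}$ are controlled.

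For $F_1(\zeta,V)=-(\zeta-\xi_0)\divH\bar V-\zeta\,\nablaH\bar V:(\mathrm{Z}^\top-\mathbb{I}_2)$, I would use the algebra property in $\rL^p(0,T;\rH^{1,q}(G))$, placing $\zeta-\xi_0$ resp.\ $\mathrm{Z}^\top-\mathbb{I}_2$ in $\rL^\infty(0,T;\rH^{1,q}(G))$ — each of order $T^{1/p'}$ — and $\divH\bar V,\nablaH\bar V$ in $\rL^p(0,T;\rH^{1,q}(G))$, bounded by $C\|\bar V\|_{\rL^p(0,T;\rH^{2,q}(G))}\le C\|V\|_{\E^v_{1,T}}$. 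This gives $\|F_1(\zeta,V)\|_{\rL^p(0,T;\rH^{1,q}(G))}\le CT^{1/p'}\|V\|_{\E^v_{1,T}}$; fixing $\delta\in(0,1/p')$ and shrinking $T\in(0,\tau]$ to absorb $CT^{1/p'-\delta}$ and the remaining fixed constants yields the stated bound.

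For $F_2$ I would split the monomials into four groups. The terms linear in $\partial^2 V$ and the terms $\mathrm{Z}\,\partial V\,\partial\mathrm{Z}$ (from the $\mu$- and $\mu'$-blocks) each carry exactly one factor $\mathrm{Z}-\mathbb{I}_2$ or $\nablaH\mathrm{Z}$; estimating the coefficient $((1-\delta z)\xi_0)^{-1}$ in $\rL^\infty(\Omega)$, that $\mathrm{Z}$-factor in $\rL^\infty(0,T;\rL^\infty(G))$ resp.\ $\rL^\infty(0,T;\rLq(G))$ (each of order $T^{1/p'}$, using $\rW^{1,q}(G)\hookrightarrow\rL^\infty(G)$, and for the $\partial V\,\partial\mathrm{Z}$ terms also $\partial_y V(\cdot,\cdot,z)\in\rW^{1,q}(G)\hookrightarrow\rL^\infty(G)$ for a.e.\ $z$), and the remaining $V$-derivatives in $\rL^p(0,T;\rH^{2,q}(\Omega))$, one obtains a bound $CT^{1/p'}\|V\|_{\E^v_{1,T}}$. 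The term $(1-\zeta/\xi_0)\dt V$ is bounded by $\|1/\xi_0\|_{\rL^\infty(G)}\|\zeta-\xi_0\|_{\rL^\infty(0,T;\rH^{1,q}(G))}\|\dt V\|_{\rL^p(0,T;\rLq(\Omega))}\le CT^{1/p'}\|V\|_{\E^v_{1,T}}$. The pressure term $\mathrm{Z}^\top\nablaH\zeta/\xi_0$ is bounded in $\rL^p(0,T;\rLq(\Omega))$ by $C\|\nablaH\zeta\|_{\rL^p(0,T;\rLq(G))}\le CT^{1/p}\|\zeta\|_{\rL^\infty(0,T;\rH^{1,q}(G))}$, a contribution of type $CT^\delta$. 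Finally, for the convective terms $\frac{\zeta}{\xi_0}\,\tilde V_l\,\partial_{y_k}V_i\,\mathrm{Z}_{l,j}$ and $\frac{\zeta}{\xi_0}\frac{1-\delta z}{\delta}\,W\dz V$, I would, for a.e.\ fixed $t$, split in the vertical variable: $\|\tilde V\!\cdot\!\nablaH V\|_{\rLq(\Omega)}\le C\|\tilde V\|_{\rL^\infty(0,1;\rW^{1,q}(G))}\|V\|_{\rH^{1,q}(\Omega)}\le C\|V\|_{\rH^{2,q}(\Omega)}\|V\|_{\rH^{1,q}(\Omega)}$ by $\rW^{1,q}(G)\hookrightarrow\rL^\infty(G)$ on horizontal slices, and $\|W\dz V\|_{\rLq(\Omega)}\le\|W\|_{\rL^\infty(0,1;\rLq(G))}\|\dz V\|_{\rLq(0,1;\rL^\infty(G))}\le C\|\zeta\|_{\rH^{1,q}(G)}\|V\|_{\rH^{1,q}(\Omega)}\|V\|_{\rH^{2,q}(\Omega)}$ by the $W$-bound and the same slicewise embedding; taking $\rL^p$ in time and invoking $\E^v_{1,T}\hookrightarrow\rL^\infty(0,T;\rH^{1,q}(\Omega))$ turns each of these into $C\|V\|_{\E^v_{1,T}}^2$. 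Summing the four groups and again choosing $\delta$ and then $T$ suitably gives $\|F_2(\zeta,V)\|_{\E^v_{0,T}}\le C(T^\delta+T^\delta\|V\|_{\E^v_{1,T}}+\|V\|_{\E^v_{1,T}}^2)$.

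I expect the main obstacle to be the two quadratic convective terms, and among them the nonlocal term $W\dz V$: one must first establish the auxiliary bound for $W$ from \eqref{eq:w} and then carry out the anisotropic product estimate so that the outcome stays in $\rL^p(0,T;\rLq(\Omega))$ with no loss of integrability in time — this is exactly where $p,q>2$ enters, guaranteeing $\rW^{1,q}(G)\hookrightarrow\rL^\infty(G)$ and $\E^v_{1,T}\hookrightarrow\rL^\infty(0,T;\rH^{1,q}(\Omega))$. A secondary but essential point is the bookkeeping showing that every non-quadratic monomial of $F_2$ contains at least one factor among $\mathrm{Z}-\mathbb{I}_2$, $\nablaH\mathrm{Z}$ and $\zeta-\xi_0$, each $O(T^{1/p'})$; the vanishing of $\nablaH\mathrm{Z}$ at $t=0$, forced by $\nablaH\rX(0,\cdot)=\mathbb{I}_2$, is what makes the $\mathrm{Z}\,\partial V\,\partial\mathrm{Z}$ terms enter with a power of $T$.
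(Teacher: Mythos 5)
Your proposal is correct and follows essentially the same line as the paper's proof: expand $F_1,F_2$ into monomials, observe that every non-quadratic monomial carries a factor $\mathrm{Z}-\mathbb{I}_2$, $\nablaH\mathrm{Z}$ or $\zeta-\xi_0$, each $O(T^{1/p'})$ because it vanishes at $t=0$, and close the estimates using the Banach-algebra property of $\rH^{1,q}(G)$ for $q>2$, the maximal-regularity embedding $\E^v_{1,T}\hookrightarrow \rL^\infty(0,T;\rH^{1,q}(\Omega))$, and horizontal $\rW^{1,q}(G)\hookrightarrow\rL^\infty(G)$. The one place where you genuinely deviate is the treatment of the two quadratic convective terms $\tilde V\cdot\nablaH V$ and $W\dz V$. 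The paper instead cites the bilinear estimate $\|V\cdot\nablaH V\|_{\rLq(\Omega)}\le C\|V\|^2_{\rH^{1+1/q,q}(\Omega)}$ from \cite{GGHHK:20b} and combines it with the embedding $\E^v_{1,T}\hookrightarrow\rL^{2p}(0,T;\rH^{2-1/p,q}(\Omega))\hookrightarrow\rL^{2p}(0,T;\rH^{1+1/q,q}(\Omega))$ (valid since $\nicefrac1p+\nicefrac1q<1$ for $p,q>2$), while for $W\dz V$ it keeps $\zeta W$ together (avoiding a lower bound on $\zeta$) and uses the mixed-norm bound $\|\zeta W\|_{\rL^\infty_t\rLq_\H\rL^\infty_z}\le C\|V\|_{\E^v_{1,T}}$ derived from \eqref{eq:w}. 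You instead do a slicewise $\rL^\infty(G)$-product and split asymmetrically into $\rL^\infty(0,T;\rH^{1,q}(\Omega))\times\rLp(0,T;\rH^{2,q}(\Omega))$; this is more elementary and avoids the external bilinear estimate and the $\rL^{2p}$-embedding, at the cost of estimating $W$ rather than $\zeta W$ and so of invoking the (harmless, since $\zeta(0)=\xi_0\ge M_1$ and $T$ is small) lower bound $\zeta\ge c>0$. Both routes yield the same bound $C\|V\|^2_{\E^v_{1,T}}$, so the proposal is sound.
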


\begin{proof}
 We start by estimating  $\zeta \in \rH^{1,p}(0,T;\rH^{1,q}_\per(G;\R))$. By Sobolev embedding and  H\"older's inequality we obtain
 \begin{equation}
   \begin{aligned}\label{eq:zetaestimate}
        \| \zeta \|_{\rLp(0,T; \rH^{1,q}(G))} &\leq C  T^{\nicefrac{1}{p}}, \quad \| \zeta - \xi_0 \|_{\rL^\infty(0,T;\rH^{1,q}(G))}
        \leq C T^{\nicefrac{1}{p'}} \quad \text{and}  \quad  \| \zeta \|_{\rL^\infty(0,T;\rH^{1,q}(G))} \leq C.
        \end{aligned}
    \end{equation}
 In view of $\rH^{1,q}(G)$ being a Banach algebra for $q>2$ and estimate \eqref{eq:zetaestimate} we see that 
    \begin{equation*}
        \begin{aligned}
            \| F_1 \|_{\rLp(0,T;\rH^{1,q}(G))} &\leq  \| \zeta -\xi_0\|_{\rL^\infty (0,T;\rH^{1,q}(G))} \|\divH \bar{V} \|_{\rLp (0,T;\rH^{1,q}(G))} \\
            & \quad + \| \zeta \|_{\rL^\infty(0,T;\rH^{1,q}(G))}\| \mathrm{Z - \mathbb{I}_2} \|_{\rL^\infty(0,T;\rH^{1,q}(G))}\| \nablaH \bar{V}\|_{\rLp(0,T;\rH^{1,q}(G))} \\
           &\leq C T^{\nicefrac{1}{p'}} \| V \|_{\E^v_{1,T}}.
        \end{aligned}
    \end{equation*}
In order to estimate $F_2$ observe that by \cite[Section~6]{GGHHK:20b} we have $\|  V \cdot \nablaH V  \|_{\rLq(\Omega)} \leq C \| V \|^2_{\rH^{1+\nicefrac{1}{q},q}(\Omega)}$.
Hence, 
    \begin{equation*}
        \begin{aligned}
              \|\zeta \sum_{l,j}\Tilde{V}_l \frac{\partial V_i}{\partial y_k} \mathrm{Z}_{l,j}  \|_{\rLp(0,T;\rLq)} \leq C\|\zeta \|_{\rL^\infty(0,T;\rL^\infty(G))}\| \mathrm{Z} \|_{\rL^\infty (0,T;\rL^{\infty}(G))} \| V \|^2_{\rL^{2p}(0,T;\rH^{1+\nicefrac{1}{q},q}(\Omega))}  \leq C \| V \|^2_{\E^v_{1,T}}
        \end{aligned}
    \end{equation*}
by the embeddings
    \begin{equation}\label{eq:maxregembedding}
        \E^v_{1,T} \hookrightarrow \rL^{2p}(0,T; \rH^{2 -\nicefrac{1}{p},q}(\Omega)) \hookrightarrow \rL^{2p}(0,T;\rH^{1+\nicefrac{1}{q},q}(\Omega)),
      \end{equation}
for  $\nicefrac{1}{p}+ \nicefrac{1}{q} \le 1$.
Next, we estimate the nonlinear term including the vertical velocity, i.\ e., $W \cdot \dz V$, anisotropically and obtain 
    \begin{equation*}
      \| \frac{\zeta}{\xi_0}W \cdot  \dz V \|_{\rLp(0,T;\rLq(\Omega)} \leq C \|\zeta W \|_{\rL^\infty_t \rLq_\H \rL^\infty_z} \| V \|_{\rLp_t \rL^\infty_\H \rH^{1,q}_z}
      \leq C\left  \| \zeta W \right \|_{\rL^\infty_t \rLq_\H \rL^\infty_z} \| V \|_{\E^v_{1,T}}.
    \end{equation*}
Combining the embeddings $\rH^{1,q}_z \hookrightarrow \rL^\infty_z$ , $\E^v_{1, T} \hookrightarrow \rL^\infty(0, T; \rL^\infty_\H(G; \rLq_z(0, 1)))$ for $\nicefrac{1}{p}+\nicefrac{1}{q} < 1$  with 
the representation \eqref{eq:w}, we obtain %\footnote{\byi Since $\displaystyle (\zeta+\frac{1}{2}z)W = -\int_0^z \left[ \widetilde{V}^\top Z^\top \nabla_\H\zeta + \zeta Z^\top : \nabla_\H V + \frac{1}{2}z' ( Z^\top : \widetilde{\nabla_\H V }) \right] \d z'$, we have to estimate the $\rL^\infty_t\rLq_\H \rL^1_z $-norm of $\displaystyle \widetilde{V}^\top Z^\top \nabla_\H\zeta + \zeta Z^\top : \nabla_\H V + \frac{1}{2}z' ( Z^\top : \widetilde{\nabla_\H V })$. \eyi}
    
    \begin{equation*}
        \begin{aligned}
             \left \| \zeta W \right \|_{\rL^\infty_t \rLq_\H \rL^\infty_z} &\leq C \|  \widetilde{V}\mathrm{Z}^\top \nabla_\H\zeta + \zeta \mathrm{Z}^\top : \nabla_\H \widetilde{V}\|_{\rL^\infty (0,T;\rLq(\Omega))} \\
             &\leq C \| \widetilde{V} \|_{\rL^\infty (0,T;\rL^\infty_\H \rLq_z)} \| \mathrm{Z} \|_{\rL^\infty (0,T;\rL^\infty(G))} \| \nablaH \zeta \|_{\rL^\infty (0,T;\rLq(G))} \\
             &\quad + C\| \zeta \|_{\rL^\infty (0,T;\rL^\infty(G))}  \| \mathrm{Z} \|_{\rL^\infty (0,T;\rL^\infty(G))} \| \nablaH \widetilde{V} \|_{\rL^\infty (0,T;\rLq(\Omega))} \\
             & \leq  C\| V \|_{\E^v_{1,T}}.
        \end{aligned}
    \end{equation*}

    Next, observe that 
    \begin{equation*}
      \big\| \big (1 -   \frac{\zeta}{\xi_0} \big )(\dt V ) \big\|_{\rLp(0, T; \rLq(\Omega))} \leq C\| \zeta - \xi_0 \|_{\rL^\infty(0,T;\rH^{1,q}(G))} \| V \|_{\rH^{1,p}(0,T;\rLq(\Omega))}
        \leq C  T^{\nicefrac{1}{p'}}\| V \|_{\E^v_{1,T}}.
    \end{equation*}
    We estimate the remaining terms by properties of the Lagrangian transformation given  in \autoref{subsec:lagrangecor} and  obtain
    \begin{equation*}
        \begin{aligned}
          \big \|  \sum_{l,j,k} \frac{\partial^2 V_i}{\partial y_l \partial y_k} (\mathrm{Z}_{k,j} - \delta_{k,j}) \mathrm{Z}_{l,j} \big \|_{\rLp(0,T;\rLq(\Omega))}
          &\leq C \| V \|_{\rLp(0,T;\rH^{2,q}(\Omega))} \| \mathrm{Z} - \mathbb{I}_2 \|_{\rL^\infty (0,T;\rH^{1,q}(G))} \| \mathrm{Z} \|_{\rL^\infty (0,T;\rH^{1,q}(G))} \\
          &\leq C T^{\nicefrac{1}{p'}}\| V \|_{\E^v_{1,T}}, \\
        \end{aligned}
     \end{equation*}   
      and similarly
\begin{equation*}
\begin{aligned}
          \big \|  \sum_{l,k}\frac{\partial^2 V_i}{\partial y_l \partial y_k}(\mathrm{Z}_{l,k}- \delta_{k,l}) \big \|_{\rLp(0,T;\rLq(\Omega))}
            &\leq C \| V \|_{\rLp(0,T;\rH^{2,q}(\Omega))} \| \mathrm{Z} - \mathbb{I}_2 \|_{\rL^\infty (0,T;\rH^{1,q}(G))} \leq C T^{\nicefrac{1}{p'}}\| V \|_{\E^v_{1,T}}.
        \end{aligned}
    \end{equation*}
    Furthermore,  observe that 
    \begin{equation*}
        \begin{aligned}
          \big \| \sum_{l,j,k} \mathrm{Z}_{l,k} \frac{\partial V_i}{\partial y_k} \frac{\partial \mathrm{Z}_{k,j}}{\partial y_l} \big \|_{\rLp(0,T;\rLq(\Omega))}
          &\leq C\| \mathrm{Z} \|_{\rL^\infty (0,T;\rL^{\infty}(G))} \| \nablaH V \|_{\rLp(0,T;\rH^{1,q}(\Omega))}  \sum_{l,j,k}
          \big \| \frac{\partial \mathrm{Z}_{k,j}}{\partial y_l} \big \|_{\rL^\infty (0,T;\rLq(G))} \\
            &\leq C T^{\nicefrac{1}{p'}}\| V \|_{\E^v_{1,T}},
        \end{aligned}
    \end{equation*}
    where we used that for $1 \leq j,k,l \leq 2$ and all $y_\H \in G$ one has  $\frac{\partial \mathrm{Z}_{k,j}}{\partial y_l}(0,y_\H) =0$. Hence, 
    \begin{equation*}
      \big \| \frac{\partial \mathrm{Z}_{k,j}}{\partial y_l} \big \|_{\rL^\infty (0,T;\rLq(G))} \leq T^{\nicefrac{1}{p'}}
      \big \|\frac{\partial \mathrm{Z}_{k,j}}{\partial y_l} \big \|_{\rH^{1,p}(0,T;\rLq(G))} \leq T^{\nicefrac{1}{p'}}  \|  \mathrm{Z} \|_{\rH^{1,p}(0,T;\rH^{1,q}(G))}  \leq CT^{\nicefrac{1}{p'}} .
    \end{equation*}
    Finally,  \eqref{eq:zetaestimate} allows us to estimate the pressure term as
    \begin{equation*}
     \big \|\frac{\zeta }{\xi_0 } \mathrm{Z}^\top \nablaH \zeta \big\|_{\rLp(0,T;\rLq(\Omega))} \leq C \| \zeta \|_{\rL^\infty(0,T;\rL^\infty(G))}\| \mathrm{Z} \|_{\rL^\infty (0,T;\rL^{\infty}(G))}
      \| \zeta\|_{\rLp(0,T;\rH^{1,q}(G))} \leq C T^{\nicefrac{1}{p'}}. 
    \end{equation*}
   
\end{proof}
    
We are now in position to prove our first local well-posedness result. 

\begin{proof}[Proof of \autoref{thm:localwpCAO}]
Let us remark first  that in the sequel we implicitly consider a shifted linearized equations in order to exploit \autoref{lem:Lame equation}. As the resulting linear term added on the right-hand side
  can be estimated easily, we do not explicitly mention this shift in the remainder of the proof.

  Observe next that \autoref{lem:Lame equation} guarantees  the existence of the reference solution $V_\ast$ to \eqref{eq:CAOatm} with  homogeneous right hand side  and with
  initial value $v_0$. 
In the following, the prescript $_0$ indicates homogeneous initial data. We then  set
\begin{equation*}
    \begin{aligned}
          \mathbb{B}_r\coloneqq \{(\zeta, V) \in\E_{1,T} \colon \ V-V_\ast \in \prescript{}{0}{\E^v_{1,T}}, \text{ and } \enspace \| V - V_\ast \|_{\E^v_{1,T}}  \le r \}.
    \end{aligned}
\end{equation*}
For $(\zeta_1, V_1 ) \in  \mathbb{B}_r$, we denote by
$$
(\zeta,V ) \coloneqq \Psi(\zeta_1, V_1) \in\E_{1,T}
$$
the solution to the linearized equation \eqref{eq:CAOatm} with right-hand sides $F_1(\zeta_1, V_1)$ and $F_2(\zeta_1, V_1)$ and guaranteed by  \autoref{lem:Lame equation}, \autoref{cor:conteq}
as well as by \autoref{lem:nonlinearestimates}. We show next that $\Psi$ is a self-map and a contraction.

Notice that $V - V_\ast \in  \prescript{}{0}{\E^v_{1,T}}$  and we hence only need to show that $ \| V - V_\ast \|_{\E^v_{1,T}} \leq r$. 
\autoref{lem:Lame equation} as well as \autoref{lem:nonlinearestimates} yield
\begin{equation*}
    \begin{aligned}
        \| V-V_\ast \|_{\E^v_{1,T}} &\leq C \| F_2(\zeta_1, V_1) \|_{\E^v_{0,T}} 
        \leq   C\big (T^\delta+T^\delta \| V_1 \|_{\E^v_{1,T}}+ \| V_1 \|^2_{\E^v_{1,T}} \big ) \\
        &\leq C \big ( T^\delta + T^\delta \| V_\ast \|_{\E^v_{1,T}}  + T^\delta \| V_1 - V_\ast \|_{\E^v_{1,T}} +  \| V_\ast \|^2_{\E^v_{1,T}}  +  \| V_1 - V_\ast \|^2_{\E^v_{1,T}} \big ) \\
        &\leq CT^\delta+ C T^\delta (\| V_\ast \|_{\E^v_{1,T}} +r) + C (\| V_\ast \|^2_{\E^v_{1,T}} + r^2),
    \end{aligned}
\end{equation*}
where the constant $C > 0$ is independent of $T$.  Observing that $ \| V_\ast \|_{\E^v_{0,T}} \rightarrow 0$ as $T \rightarrow 0$, the corresponding terms are getting arbitrary small by
choosing $T>0$ appropriately small. Hence, for appropriate $T$ and $r$, the solution map $\Psi$ is a self-map. The contraction property follows the same way by using the a priori estimate
on $\zeta$ given in  \autoref{cor:conteq} and the estimate on $F_1$ given in \autoref{lem:nonlinearestimates}.
 
Finally, we choose $T$ sufficiently small such that $\rX(t,\cdot)$ is a $C^1$-diffeomorphisms from $G$ into $G$ and set $\mathrm{Y}(t,\cdot) = \rX^{-1}(t,\cdot)$,
see \autoref{subsec:lagrangecor}. Then, defining 
\begin{equation*}
    \begin{aligned}
        \xi( t,x,y) := \zeta( t,\mathrm{Y} (t,x,y)) \quad \text{ and }\quad
        v (t,x,y,z) := V (t,\mathrm{Y}(t,x,y),z)
    \end{aligned}
\end{equation*}
we verify that $(\xi, v)\in \E_{1,T}$ satisfies the system \eqref{eq:compressibleprimitivegravity2}.
Since the vertical transform \eqref{eq:ztransform} is invertible for all times, we find a solution to \eqref{eq:compressibleprimtivegravity}.
The uniqueness property follows from the uniqueness of solutions to the system \eqref{eq:CAOatm}.
The  proof of assertion a) of \autoref{thm:localwpCAO} is complete. 

We finally  provide a proof of assertion  b). After introducing  Lagrangian coordinates for the system
\eqref{eq:compressiblegamma}, we see  that the hydrostatic Lam\'e operator in this case reads as  
\begin{equation*}
\begin{aligned}  
        B&: = \mu c(x,y,z)\Delta  + \mu' c(x,y,z) \nablaH \divH \\ 
       D(B)&:= \{ v \in \rH_\per^{2,q} (\Omega;\R^2) \colon \ v = 0 \text{ on } \Gamma_u \ \text{ and } \ \dz v = 0 \text{ on }\Gamma_b  \} 
\end{aligned}
 \end{equation*}
Here $c:\Omega \to \R$ is given by $c(x,y,z):=\frac{1}{\xi_0 + (1/2) z}$.
Arguing as in \autoref{sec:lintheo}, we see  that $-B+\omega$ admits a bounded $\mathcal{H}^\infty$-calculus on $\rLq(\Omega;\R^2)$ of $\mathcal{H}^\infty$-angle strictly less than $\pi/2$.  Moreover,  a result similar to
\autoref{cor:conteq} is true for the transformed equation \eqref{eq:compressiblegamma}$_1$,  which reads as 
    \begin{equation*}
      \dt \zeta + \zeta_0 \divH \bar{V} + \frac{1}{2}\int_0^1 z \divH V \d z = -(\zeta- \zeta_0) \divH \bar{V} - \zeta \nablaH \bar{V} :
      \big [\mathrm{Z}^\top  - \mathbb{I}_2 \big ] -\frac{1}{2} \int_0^1 z \nablaH V :\big [ \mathrm{Z}^\top -\mathbb{I}_2 \big ]\d z.
    \end{equation*}
    We then may conclude local well-posedness of equation \eqref{eq:compressiblegamma}. In fact,  the estimates of the nonlinear terms are  deduced similarly to the ones given
    in \autoref{lem:nonlinearestimates}. In particular, the transformed pressure term $( 2 \xi + z) (\mathrm{Z}^\top \nablaH \xi)$ may be estimated following the  arguments given  in the proof of 
\autoref{lem:nonlinearestimates}.
\end{proof}

\section{Global in time existence}\label{sec:global}
 
In order to prove the assertions of \autoref{thm:globalWP} we transform the system \eqref{eq:compressibleprimtivegravity}  by a slightly different  Lagrange transform compared to the one
introduced in \autoref{subsec:lagrangecor}. More precisely, we define the transformed density and velocity by
\begin{equation*}
    \zeta(t,y_\H) := \xi(t,\rX(t,y_\H)) - \bar{\xi} \quad \text{ and }\quad V(t,y_\H,z) := v(t,\rX(t,y_\H),z)),
\end{equation*}
where $\bar{\xi} = \frac{1}{|G|} \int_G \xi_0>0$.
The reason for this modification is our aim to  linearize the system \eqref{eq:compressibleprimitivegravity2} around the constant steady state $(\bar{\xi}, 0)$.
The respective system is then given by
\begin{equation}
	\left\{
	\begin{aligned}
        \dt \zeta + \bar{\xi} \divH \bar{V}  &=F_1(\zeta, V), &&\text{ on }G \times (0,\infty), \\
		\dt  V  - \frac{\mu \DeltaH V }{(1-\delta z)\bar{\xi}}- \frac{\mu}{\bar{\xi}} \dz \big ( \frac{1-\delta z}{\delta^2} \dz V \big ) -  \frac{\mu' \nablaH \divH V}{(1-\delta z)\bar{\xi}}  + \frac{\nablaH \zeta}{\bar{\xi}} &= F_2(\zeta, V), &&\text{ on } \Omega \times (0,\infty),  \\
		  \dz \zeta  &= 0  , &&\text{ on } \Omega \times (0,\infty), \\ %\frac{1}{p} , &&\text{ on } \Omega \times (0,T),\\
%		\div \uair &= 0, &&\text{ on } \Omega \times (0,T),\\
%        \left (\partial_p \vair,\Omega \right ) &= (0, 0), &&\text{ on } \Gammaairu \times (0,T), \\
%        \Omega  &= 0, &&\text{ on } \Gammaairb \times (0,T), \\
%         \partial_p \vair  &= p_s^{-1} (\vocn - \vair) |\vocn - \vair|, &&\text{ on } \Gammaairb \times (0,T), \\ %\frac{(\vocn - \vair) |\vocn - \vair|}{p_s}, &&\text{ on } \Gammaairb \times (0,T), \\
        %p &=  \rho^\gamma , &&\text{ on } \Omega \times (0,T),  \\
        %\dt \vocn - \Delta \vocn + \uocn \cdot \nabla \vocn + \nablaH \pi &= f^{\ocn}, &&\text{ on } \Omegaocn \times (0,T), \\
        %\dz \pi &= - \rhoocn g, &&\text{ on } \Omegaocn \times (0,T), \\
        %\div \uocn &= 0, &&\text{ on } \Omegaocn \times (0,T). \\
        V &=0, &&\text{ on } \Gamma_u \times (0,\infty), \\
        \dz V &= 0, &&\text{ on } \Gamma_b \times (0,\infty), \\
   (\zeta(0), V(0)) &=(\xi_0-\bar{\xi}, v_0), 
%        , \\
%        (\dz \vocn, \wocn) &=(0, 0), &&\text{ on } \Gamma_b \times (0,T), \\
%        (\vair, \vocn)|_{t=0} &=(\vair_0, \vocn_0)  
	\end{aligned}
	\right. 
	\label{eq:CPEatmostransformed}
\end{equation}
where the right-hand sides $F_1$ and $F_2$ for $i=1,2$ are given by
\begin{equation}
    \begin{aligned}
        \label{eq:nonlinearities2}
        F_1(\zeta,V) &= -\zeta \divH \bar{V} - (\zeta + \bar{\xi}) \nablaH \bar{V} : \left [\mathrm{Z}^\top  - \mathbb{I}_2 \right ],\\
        (F_2(\zeta,V))_i &= \frac{\mu}{(1-\delta z)\bar{\xi}}\Big (\sum_{l,j,k} \frac{\partial^2 V_i}{\partial y_l \partial y_k} \left (\mathrm{Z}_{k,j}
          - \delta_{k,j}\right ) \mathrm{Z}_{l,j} + \sum_{l,k}\frac{\partial^2 V_i}{\partial y_l \partial y_k}\left (\mathrm{Z}_{l,k}- \delta_{k,l}\right)
        +  \sum_{l,j,k} \mathrm{Z}_{l,k} \frac{\partial V_i}{\partial y_k} \frac{\partial \mathrm{Z}_{k,j}}{\partial y_l} \Big) \\
        &\quad +\frac{\mu'}{(1-\delta z)\bar{\xi}} \Big (\sum_{l,j,k} \frac{\partial^2 V_j}{\partial y_l \partial y_k}\left ( (\mathrm{Z}_{k,j}- \delta_{k,j}\right)\mathrm{Z}_{l,i}
        +  \sum_{l,j}\frac{\partial^2 V_j}{\partial y_l \partial y_j} \left ( \mathrm{Z}_{l,i}- \delta_{k,i}\right)
        +  \sum_{l,j,k} \mathrm{Z}_{l,i} \frac{\partial V_j}{\partial y_k} \frac{\partial \mathrm{Z}_{k,j}}{\partial y_l} \Big) \\
        & \quad% -\frac{\tilderhos \left( \mu (\Delta V)_i + (\mu+\lambda) (\nablaH \divH V)_i\right )}{(\tilderhos + \bar{\rho}_s)\bar{\rho}_s}
        - \frac{\zeta (\dt V)_i}{\bar{\xi}}
        -\frac{\zeta + \bar{\xi}}{\bar{\xi}} \Big (\sum_{l,j}\Tilde{V}_l \frac{\partial V_i}{\partial y_k} \mathrm{Z}_{l,j}- (W \dz V)_i \Big )
        - \frac{1}{\bar{\xi}} \sum_j \frac{\partial \zeta}{\partial y_j}(\mathrm{Z}_{j, i} - \delta_{j,i} ).  \\
    \end{aligned}
\end{equation}

\subsection{The compressible hydrostatic Stokes operator}
\label{subsec:Lincom}\mbox{} \\
We  introduce now the compressible  hydrostatic  Stokes operator and discuss its properties. Recall that $\rX_0 \coloneqq  \rH_\per^{1,q}(G,\R) \times \rLq(\Omega,\R^2)$ and
$\rX_1\coloneqq  \rH^{1,q}_\per(G,\R) \times \mathrm{Y}$. For $\bar{\xi}$ given as above, define functions  $a_{\bar{\xi}}:\Omega \to \R$ and $b_{\bar{\xi}}:\Omega \to \R$ by
\begin{equation}\label{def:abar}
a_{\bar{\xi}}(x,y,z):= a_{\bar{\xi}}(z):=\frac{1}{(1-\delta z)\bar{\xi}} \mbox{ and } b_{\bar{\xi}}(x,y,z):=b_{\bar{\xi}}(z):=\frac{(1-\delta z)}{\delta^2 \bar{\xi}}.
\end{equation}
Furthermore, define the differential operator $\cA_{\bar{\xi}}$ by  
$$
\cA_{\bar{\xi}} v:= \mu a_{\bar{\xi}}(z)v + \partial_z(\mu b_{\bar{\xi}}(z)\partial_zv) + \mu' a_{\bar{\xi}}(z) \nablaH \divH v,
$$
subject to boundary conditions
\begin{equation}\label{Abcstokes}
v = 0 \text{ on } \Gamma_u  \text{ and }  \dz v = 0 \text{ on }\Gamma_b.  
\end{equation}

For $q >1$ consider the $\rL^q$-realization of $\cA_{\bar{\xi}}$ in $\rLq(\Omega;\R^2)$ subject to \eqref{Abcstokes} defined by
\begin{equation}\label{def:ahlxi}
\begin{aligned}
A_{\mathrm{HL},\bar{\xi}}v&:=  \cA_{\bar{\xi}} v \\
\D(A_{\mathrm{HL},\bar{\xi}})&:=  \{ v \in \rH_\per^{2,q} (\Omega;\R^2) \colon \ v = 0 \text{ on } \Gamma_u \ \text{ and } \ \dz v = 0 \text{ on }\Gamma_b  \} 
\end{aligned}
\end{equation}
We call  $A_{\mathrm{HL},\bar{\xi}}$ the {\em hydrostatic Lam\'e operator} subject to $\bar{\xi}$. Moreover, we define the \emph{compressible, hydrostatic Stokes operator} $A_{\mathrm{CHS}}$ in  $\rX_0$ by 
\begin{equation}
    \begin{aligned}\label{eq:matrix}
      %  \D(\mathcal{A}) &= \left \{ (\zeta, V) \in\rX_1 \colon \ V = 0 \text{ on } \Gamma_u \ \text{ and } \ \dz V = 0 \text{ on }\Gamma_b \right \}, \\ 
        A_{\mathrm{CHS}} &:= \begin{pmatrix}
             0 & -\bar{\xi} \divH \mathrm{avg} \\
             -\nablaH & A_{\mathrm{HL},\bar{\xi}}
           \end{pmatrix},\\
 \D(A_{\mathrm{CHS}}) &:=  \rX_1 = \rH_\per^{1,q}(G;\R) \times \D(A_{\mathrm{HL},\bar{\xi}}),  
\end{aligned}
\end{equation}
where $\mathrm{avg}(v) = \int_0^1 v(\cdot, z) \d z$.

For the investigation of the linear system \eqref{eq:CPEatmostransformed} we show first that the compressible hydrostatic Stokes operator $-A_{\mathrm{CHS}}$  admits a bounded  $\mathcal{H}^\infty$-calculus 
up to a shift.

\begin{prop}\label{prop:rsec}
  Let $q \in (1,\infty)$. Then there exists $\omega >0$ such that the operator $-A_{\mathrm{CHS}} + \omega$ admits a bounded $\mathcal{H}^\infty$-calculus on $\rX_0$ with
  $\mathcal{H}^\infty$-angle $\Phi^\infty_{-A_{\mathrm{CHS}}+ \omega} < \pi/2$. 
\end{prop}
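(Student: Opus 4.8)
Since the density equation in \eqref{eq:CPEatmostransformed} carries no spatial derivatives, the block matrix $-A_{\mathrm{CHS}}$ of \eqref{eq:matrix} is, up to the two off--diagonal couplings $\bar{\xi}\divH\,\mathrm{avg}$ and $\nablaH$, the direct sum of the bounded operator $0$ on $\rH_\per^{1,q}(G;\R)$ and the hydrostatic Lam\'e operator $-A_{\mathrm{HL},\bar{\xi}}$ on $\rLq(\Omega;\R^2)$. I would therefore fix $\omega>0$ so large that, by \autoref{prop:Hunendlich}, $L:=-A_{\mathrm{HL},\bar{\xi}}+\omega$ has a bounded $\mathcal{H}^\infty$-calculus on $\rLq(\Omega;\R^2)$ of angle $<\pi/2$ and is invertible, and put $D:=\mathrm{diag}(\omega\,\Id,L)$ on $\rX_0$ with $\D(D)=\rX_1$. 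The first diagonal block is bounded and invertible with spectrum $\{\omega\}$, hence has a bounded $\mathcal{H}^\infty$-calculus of angle $0$; since a direct sum of two operators with bounded $\mathcal{H}^\infty$-calculus again has one, of angle the maximum of the two, $D$ has a bounded $\mathcal{H}^\infty$-calculus on $\rX_0$ of angle $<\pi/2$ and is invertible.

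\textbf{Removing the critical coupling.} The coupling $\bar{\xi}\divH\,\mathrm{avg}\colon\D(A_{\mathrm{HL},\bar{\xi}})\to\rH_\per^{1,q}(G;\R)$ is of \emph{critical} order relative to $D$ (its relative bound cannot be made small by enlarging $\omega$), so it cannot be treated as a lower--order perturbation; instead I would conjugate it away. Using that $\mathrm{avg}(v)=\int_0^1 v(\cdot,z)\d z$ is bounded $\rH_\per^{s,q}(\Omega)\to\rH_\per^{s,q}(G)$, that $\nablaH$ and $\divH$ each shift the periodic Bessel potential scale by $-1$, and that $A_{\mathrm{HL},\bar{\xi}}$ is boundedly invertible on $\rLq$ with $A_{\mathrm{HL},\bar{\xi}}^{-1}\in\mathcal{L}(\rH_\per^{s,q}(\Omega),\rH_\per^{s+2,q}(\Omega))$ (parameter ellipticity from \autoref{thm:elliptic} together with injectivity coming from the Dirichlet condition on $\Gamma_u$), the operator
\[
  T:=-\bar{\xi}\divH\,\mathrm{avg}\,A_{\mathrm{HL},\bar{\xi}}^{-1}\colon\rLq(\Omega;\R^2)\longrightarrow\rH_\per^{1,q}(G;\R)
\]
is bounded and in fact gains one derivative, so that $P:=\bigl(\begin{smallmatrix}\Id & T\\ 0 & \Id\end{smallmatrix}\bigr)$ is a bounded automorphism of both $\rX_0$ and $\rX_1$. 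A direct computation, using $T\,A_{\mathrm{HL},\bar{\xi}}=-\bar{\xi}\divH\,\mathrm{avg}$, gives
\[
  P^{-1}\bigl(-A_{\mathrm{CHS}}+\omega\bigr)P = D + R,\qquad
  R=\begin{pmatrix} -T\nablaH & -T\nablaH T\\ \nablaH & \nablaH T \end{pmatrix},
\]
so that the choice of $T$ has absorbed the critical term. Since $T$ gains, and $\divH\,\mathrm{avg}$ as well as $\nablaH$ each lose, exactly one derivative, every entry of $R$ is of order zero, i.e.\ $R\in\mathcal{L}(\rX_0)$.

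\textbf{Conclusion.} It would then remain to invoke the (standard) stability of a bounded $\mathcal{H}^\infty$-calculus under bounded perturbations: for $\mu$ large enough both $D+\mu$ and $D+R+\mu$ are invertible and sectorial of angle $<\pi/2$, and for $\sigma'$ slightly larger than the $\mathcal{H}^\infty$-angle of $D$ and $f\in\mathcal{H}^\infty_0(\Sigma_{\sigma'})$ the resolvent identity yields
\[
  f(D+R+\mu)=f(D+\mu)+\frac{1}{2\pi\mathrm{i}}\int_{\partial\Sigma_{\sigma'}} f(\lambda)\,(\lambda-D-\mu)^{-1}R\,(\lambda-D-R-\mu)^{-1}\,\d\lambda,
\]
where the bound $\|(\lambda-D-\mu)^{-1}\|+\|(\lambda-D-R-\mu)^{-1}\|\le C(1+|\lambda|)^{-1}$ on $\partial\Sigma_{\sigma'}$ (available precisely because both operators are invertible and sectorial) makes the integral converge with norm $\le C\|R\|\,\|f\|_{\mathcal{H}^\infty(\Sigma_{\sigma'})}$. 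Hence $D+R+\mu$, and therefore $-A_{\mathrm{CHS}}+(\omega+\mu)=P(D+R+\mu)P^{-1}$ by similarity, admits a bounded $\mathcal{H}^\infty$-calculus on $\rX_0$ of angle $<\pi/2$.

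\textbf{Main obstacle.} The crux is the order--critical coupling $\bar{\xi}\divH\,\mathrm{avg}$: a naive small--perturbation argument for the $\mathcal{H}^\infty$-calculus is unavailable, and one must genuinely exploit the block structure — equivalently, the separation of the spectra of the two diagonal blocks of $A_{\mathrm{CHS}}$ — through the explicit conjugation $P$ to trade the critical term for a bounded remainder. The technical heart is then the derivative count showing that $T$, and hence all entries of $R$, are bounded on $\rX_0$ (the delicate points being the mapping $A_{\mathrm{HL},\bar{\xi}}^{-1}\colon\rLq(\Omega)\to\rH^{2,q}(\Omega)$ with the full boundary conditions, and the exact gain--loss balance in $\divH\,\mathrm{avg}\,A_{\mathrm{HL},\bar{\xi}}^{-1}$); once this is in place the remainder of the argument is routine.
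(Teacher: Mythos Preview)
Your argument is correct, but the paper's route is markedly simpler. It splits $A_{\mathrm{CHS}}=A_0+A_1$ with $A_0$ the upper-triangular part $\bigl(\begin{smallmatrix}0&-\bar\xi\,\divH\mathrm{avg}\\0&A_{\mathrm{HL},\bar{\xi}}\end{smallmatrix}\bigr)$ and $A_1=\bigl(\begin{smallmatrix}0&0\\-\nablaH&0\end{smallmatrix}\bigr)$. Since the first factor of $\rX_0$ is $\rH^{1,q}_\per(G)$, the map $\nablaH\colon\rH^{1,q}_\per(G)\to\rLq(\Omega)$ is bounded, so $A_1\in\mathscr{L}(\rX_0)$ --- a fact you also exploit when checking $R\in\mathscr{L}(\rX_0)$. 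The whole task therefore reduces to a bounded $\mathcal{H}^\infty$-calculus for the upper-triangular $-A_0+\omega$, and for that no conjugation is needed: the resolvent of a triangular operator matrix is explicitly triangular, and the off-diagonal entry of $f(-A_0+\omega)$ is controlled by the calculus of the $(2,2)$ block together with the $A_{\mathrm{HL},\bar{\xi}}$-boundedness of $\divH\mathrm{avg}$. In other words, your ``order-critical coupling'' is not an obstacle once one compares with the \emph{triangular} operator $A_0$ rather than with the diagonal $D$; your framing that ``one must genuinely exploit the block structure through the explicit conjugation $P$'' overstates the difficulty. What your conjugation does buy is a reduction to diagonal-plus-bounded, which is more explicit and would transfer to settings where the triangular resolvent identity is less convenient; the price is the extra hypothesis that $A_{\mathrm{HL},\bar{\xi}}$ itself (unshifted) is invertible, which you invoke for the definition of $T$ but which the paper never needs.
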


\begin{proof}
Arguing similarly as in the proof of  \autoref{prop:Hunendlich}, we conclude that there exists a $\omega_0 \in \R$ such that for all $\omega>\omega_0$ the operator
  $-A_{\mathrm{HL},\bar{\xi}}+\omega$ admits a bounded $\mathcal{H}^\infty$-calculus on $\rLq(\Omega;\R^2)$ of angle $\Phi^\infty_{-A_{\mathrm{HL},\bar{\xi}}+ \omega} < \pi/2$.
We next decompose $A_{\mathrm{CHS}}$ into
\begin{equation}
    \begin{aligned}
     A_{\mathrm{CHS}} &= \begin{pmatrix}
             0 & -\bar{\xi} \divH \mathrm{avg} \\
             0 & A_{\mathrm{HL},\bar{\xi}}
           \end{pmatrix} +
           \begin{pmatrix}
             0 & 0 \\
             -\nablaH & 0
           \end{pmatrix} =: A_0 + A_1.
\end{aligned}
\end{equation}
In order to show the bounded $\mathcal{H}^\infty$-calculus of  $-A_{\mathrm{CHS}}$, note that $\bar{\xi}\divH \mathrm{avg}$ is a bounded operator from $\mathrm{Y}$ to $\rH_\per^{1,q}(G;\R)$. Hence, $A_0$ admits a bounded
$\mathcal{H}^\infty$-calculus on $\rH_\per^{1,q}(G;\R) \times \rLq(\Omega;\R^2)$.  Furthermore, since
$\nablaH$ maps $\rH_\per^{1,q}(G;\R^2)$ boundedly to $\rL^q(\Omega;\R^2)$, it follows that $A_{\mathrm{CHS}}$ is a bounded perturbation of $A_0$ and that thus $A_{\mathrm{CHS}}$ admits a bounded $\mathcal{H}^\infty$-calculus on $\rX_0$.
\end{proof}

We prove next that the compressible hydrostatic Stokes operator $\mathcal{A}$ is invertible when defined in a suitable subspace $\rX_m$ of $\rX_0$.
To this end,  we study the following resolvent problem within the $\rLq$-setting: given  $\lambda \in \C$ satisfying  $\Re \  \lambda \geq0$, consider the equation
\begin{equation}
	\left\{
	\begin{aligned}
        \lambda \zeta + \divH \bar{V}  &= f_1  , &&\text{ in } G, \\ 
        \lambda V  - \frac{\mu \DeltaH V }{(1-\delta z)}-\mu  \dz \big ( \frac{1-\delta z}{\delta^2} \dz V \big ) -
        \frac{\mu' \nablaH \divH V}{(1-\delta z)}  + \nablaH \zeta  &= f_2, &&\text{ on } \Omega ,  \\
        \dz \zeta  &= 0, &&\text{ on } \Omega, \\ %\frac{(\vocn - \vair) |\vocn - \vair|}{p_s}, &&\text{ on } \Gammaairb \times (0,T), \\
        V &=0, &&\text{ on } \Gamma_u, \\
        \dz V &=0, &&\text{ on } \Gamma_b, \\
	\end{aligned}
	\right. 
	\label{eq:resolventhstokes}
\end{equation}
where we have set $\bar{\xi}=1$ for simplicity of the representation.

Integrating the first equation of \eqref{eq:resolventhstokes} and using the periodic boundary conditions we see that for the case $\lambda = 0$ we need to impose the compatibility condition 
$\int_G f_1 \d (x,y) =0$. 
Therefore, it is natural  to consider  the space 
\begin{equation*}
    \rL^q_0(G;\R) :=  \{ f \in \rL^q(G;\R) \colon \int_G f \d(x,y) =0 \}
\end{equation*}
and to restrict the operator $A_{\mathrm{CHS}}$ to the subspace $\rX_m \coloneqq \rH_\per^{1,q}(G;\R) \cap \rL^q_0(G;\R) \times \rL^q(\Omega;\R^2)$ of $\rX_0$ and equipped with the domain
$\D(A_{\mathrm{CHS}}) \cap \rX_m$.

%The latter  is possible since $\rX_m$ is invariant under the semigroup $(\mathrm{e}^{tA_{\mathrm{CHS}}})_{t\geq0}$. Next, we prove that the part of $A_{\mathrm{CHS}}$ in $\rX_m$ is invertible.

\begin{lem}\label{lem:invert}
 Let $q \in [2, \infty)$ and $(f_1,f_2) \in \rX_m =\rH_\per^{1,q}(G;\R) \cap \rL^q_0(G;\R) \times \rL^q(\Omega,\R^2)$. Then there exists a unique solution $(\zeta,V) \in \D(A_{\mathrm{CHS}}) \cap \rX_m =\rH^{1,q}_\per(G;\R) \cap \rLq_0(G;\R) \times \mathrm{Y})$ of \eqref{eq:resolventhstokes} with $\lambda=0$ and a constant $C>0$ such that
\begin{equation}\label{eq:estimateinvert}
        \| \zeta \|_{ \rH^{1,q}(G)}+\| V\|_{\rH^{2,q}(\Omega)}  \leq C \left ( \| f_1 \|_{ \rH^{1,q}(G)}+ \| f_2\|_{\rLq(\Omega)} \right ).
\end{equation}
\end{lem}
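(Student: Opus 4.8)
\emph{Strategy.} Since $\xi_0$ is replaced by the constant $\bar{\xi}=1$ in \eqref{eq:resolventhstokes}, all coefficients of the velocity equation depend only on $z$, so the problem is invariant under horizontal translations; the plan is to eliminate the pressure $\zeta$ and reduce everything to a scalar multiplier equation on $G=\T^2$. First I would record that the hydrostatic Lam\'e operator $L:=-A_{\mathrm{HL},1}$ (the operator $A_{\mathrm{HL},\bar{\xi}}$ of \autoref{subsec:Lincom} with $\bar{\xi}=1$) is invertible on $\rLq(\Omega;\R^2)$: by \autoref{prop:Hunendlich} it is sectorial after a shift, its domain $\mathrm{Y}$ embeds compactly into $\rLq(\Omega;\R^2)$, hence its spectrum consists of isolated eigenvalues only, and $0$ is not an eigenvalue by the energy identity of the uniqueness step below (for $q\ge 2$ this reduces to $q=2$ via $\rH^{2,q}(\Omega)\hookrightarrow\rH^{2,2}(\Omega)$); thus $L^{-1}\colon\rLq(\Omega;\R^2)\to\mathrm{Y}$ is bounded. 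Equation \eqref{eq:resolventhstokes}$_2$ with $\lambda=0$ reads $LV+\nablaH\zeta=f_2$, forcing $V=L^{-1}f_2-L^{-1}\nablaH\zeta$, and inserting this into \eqref{eq:resolventhstokes}$_1$ turns the system into the scalar equation
\begin{equation*}
\mathcal{S}\zeta:=\divH\,\overline{L^{-1}\nablaH\zeta}=\divH\,\overline{L^{-1}f_2}-f_1=:h\quad\text{on }G,
\end{equation*}
where $\mathcal{S}$ is bounded on $\rH_\per^{1,q}(G;\R)$ and, by periodicity and $f_1\in\rL^q_0(G;\R)$, maps into (and $h$ lies in) $\rH_\per^{1,q}(G;\R)\cap\rL^q_0(G;\R)$. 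So the whole statement reduces to showing that $\mathcal{S}$ restricts to an isomorphism of $\rH_\per^{1,q}(G;\R)\cap\rL^q_0(G;\R)$.

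For uniqueness I would set $f_1=f_2=0$, pair \eqref{eq:resolventhstokes}$_2$ with $V$ in $\rL^2(\Omega;\R^2)$ and integrate by parts, using $V=0$ on $\Gamma_u$, $\dz V=0$ on $\Gamma_b$ and $\int_\Omega\nablaH\zeta\cdot V=\int_G\nablaH\zeta\cdot\bar{V}=-\int_G\zeta\,\divH\bar{V}=0$; this yields
\begin{equation*}
\mu\!\int_\Omega\frac{|\nablaH V|^2}{1-\delta z}+\frac{\mu}{\delta^2}\!\int_\Omega(1-\delta z)|\dz V|^2+\mu'\!\int_\Omega\frac{(\divH V)^2}{1-\delta z}=0 .
\end{equation*}
Since $1-\delta z\in[\mathrm{e}^{-1},1]$ and, by a Fourier computation on $\T^2$, $\mu|\nablaH V|^2+\mu'(\divH V)^2\ge\min(\mu,\mu+\mu')|\nablaH V|^2$, this forces $\nablaH V=\dz V=0$, hence $V=0$ by Poincar\'e (as $V=0$ on $\Gamma_u$); then $\nablaH\zeta=0$ and the mean-free constraint gives $\zeta=0$. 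In particular $\mathcal{S}$ is injective on $\rH_\per^{1,q}(G;\R)\cap\rL^q_0(G;\R)$ for every $q\ge2$.

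It remains to invert $\mathcal{S}$. Because $a_{\bar{\xi}},b_{\bar{\xi}}$ depend only on $z$, the horizontal Fourier transform diagonalises both $L$ and $\mathcal{S}$: one has $\widehat{\mathcal{S}\zeta}(k)=\sigma(k)\hat{\zeta}(k)$ with $\sigma(k)=ik\cdot\overline{u_k}$, where $u_k$ solves the two-point problem $\mathcal{L}_k u_k=ik$, $u_k(1)=0$, $u_k'(0)=0$ for $\mathcal{L}_k u:=\mu a_{\bar{\xi}}(z)|k|^2u+\mu'a_{\bar{\xi}}(z)(k\cdot u)k-\dz(\mu b_{\bar{\xi}}(z)\dz u)$, which is coercive and hence invertible for every $k\in\R^2$; thus $k\mapsto\mathcal{L}_k^{-1}$, and so $\sigma$, extends smoothly to $\R^2$. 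Moreover $\sigma(k)\neq0$ for $k\in\Z^2\setminus\{0\}$, since $\sigma(k_0)=0$ would make $\bigl(-\mathrm{e}^{ik_0\cdot y_\H},\,L^{-1}\nablaH\mathrm{e}^{ik_0\cdot y_\H}\bigr)$ a nontrivial solution of the homogeneous problem, contradicting the previous paragraph. A boundary-layer analysis of $\mathcal{L}_k$ as $|k|\to\infty$ gives $u_k(z)=(1-\delta z)(\mu|k|^2+\mu'kk^{\top})^{-1}(ik)+\text{(lower order)}$, whence $\sigma(k)\to-\tfrac{1-\delta/2}{\mu+\mu'}\neq0$; combined with the nonvanishing of $\sigma$ on the finite set $0<|k|\le R$ this shows $\inf_{k\in\Z^2\setminus\{0\}}|\sigma(k)|>0$, and $k\mapsto1/\sigma(k)$ satisfies the derivative bounds of the Mikhlin--Marcinkiewicz theorem, so the Fourier multiplier with symbol $1/\sigma$ is bounded on $\rH^{s,q}(G)$ for all $s\ge0$, $q\in(1,\infty)$. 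Hence $\mathcal{S}$ is an isomorphism of $\rH_\per^{1,q}(G;\R)\cap\rL^q_0(G;\R)$. (For $q=2$ this is softer: testing $\mathcal{S}\zeta$ against $\zeta$, the coercivity of the form of $L$ together with the hydrostatic inf-sup bound $\|\nablaH\zeta\|_{(\rH^1(\Omega))^*}\gtrsim\|\zeta\|_{\rL^2(G)}$ for mean-free $\zeta$ --- verified by $\Phi(x,y,z)=\theta(z)\nablaH\psi$ with $\DeltaH\psi=-\zeta$, $\theta(1)=0$, $\theta'(0)=0$, $\int_0^1\theta=1$ --- gives $\langle-\mathcal{S}\zeta,\zeta\rangle_{\rL^2(G)}\gtrsim\|\zeta\|_{\rL^2(G)}^2$ and Lax--Milgram applies.) Existence and the bound \eqref{eq:estimateinvert} then follow by setting $\zeta:=\mathcal{S}^{-1}h$, $V:=L^{-1}(f_2-\nablaH\zeta)$, and chaining the estimates for $L^{-1}$, the vertical average, $\divH$ and $\mathcal{S}^{-1}$.

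The main obstacle is exactly this last point for general $q$: showing that the symbol $\sigma$ stays bounded away from $0$ on $\Z^2\setminus\{0\}$ and, above all, that $1/\sigma$ satisfies the derivative estimates required by an $\rL^q$-multiplier theorem, which amounts to uniform-in-$k$ (including boundary-layer) resolvent estimates for the parameter-dependent ODE family $\mathcal{L}_k$. For $q=2$ everything is soft (Plancherel, Lax--Milgram), so the genuine work is confined to $q\neq2$.
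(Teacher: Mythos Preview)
Your reduction to the Schur complement $\mathcal{S}=\divH\,\overline{L^{-1}\nablaH}$ is clean, and your $q=2$ argument (energy identity for uniqueness, coercivity of $-\mathcal{S}$ via the hydrostatic inf--sup bound, Lax--Milgram) is essentially equivalent to the paper's, which applies the Babu\v ska--Brezzi theorem directly to the saddle-point pair $(\mathcal{C},\mathcal{D})$ and invokes the same inf--sup estimate (from \cite[Lemma~3.2]{HK:16}), followed by difference quotients for $\rH^2$-regularity.

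For $q>2$ the approaches diverge, and the paper's route avoids exactly the obstacle you flag. You propose a Fourier-multiplier argument and correctly note that the hard part---the Mikhlin derivative bounds for $1/\sigma$, i.e.\ uniform boundary-layer estimates for $\partial_k^\alpha\mathcal{L}_k^{-1}$---is left as a sketch; this is a genuine gap in your proof as written. The paper sidesteps it: multiply \eqref{eq:resolventhstokes}$_2$ by $(1-\delta z)$ and take the vertical average to obtain a two-dimensional Stokes system on $G$ for $(\bar v,\tilde\zeta)$ with $\tilde\zeta=(1-\tfrac{\delta}{2})\zeta$, together with a separate hydrostatic Lam\'e boundary value problem for $v$ once $\zeta$ is known. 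Starting from the $\rL^2$-solution, Sobolev embedding and the trace theorem put the right-hand side of the 2D Stokes system into $\rL^q(G)$ for $2<q\le4$; the existing $\rL^q$-theory for 2D Stokes (\cite[Lemma~3.5]{HK:16}) then yields $\zeta\in\rH^{1,q}(G)$, and elliptic $\rL^q$-regularity for the Lam\'e problem (via \cite{DHP:03}) gives $v\in\rH^{2,q}(\Omega)$. One more iteration covers $q>4$. So the paper trades your single hard multiplier step for a bootstrap through two off-the-shelf results; your approach is plausible and more direct in spirit, but the symbol analysis you identify as the main obstacle is indeed the missing piece.
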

  
\begin{proof}\label{proof:Leminvert}
The proof is inspired by  \cite[Chapter~3]{HK:16}. Note, however, that we need to consider here the hydrostatic Lam\'{e} type operator $A_{\mathrm{HL},\bar{\xi}}$  
instead of the Laplacian as in \cite[Chapter-3]{HK:16} and that  we have to deal with the  non-homogeneous divergence equation \eqref{eq:resolventhstokes}$_1$.
We subdivide the proof into two steps.

\noindent
{\em Step 1: the case $q=2$} \mbox{} \\
The existence of a unique solution $(\zeta,V)$ to \eqref{eq:resolventhstokes} combined  with the estimate \eqref{eq:estimateinvert} for the case $q=2$ follows from the Babu\v ska-Brezzi theory.
In fact, consider the weak formulation of the equation \eqref{eq:resolventhstokes}. For this we introduce the function spaces
        \begin{equation*}
            \mathcal{V}= \left\{ \varphi\in \rH_\per^{1,2}(\Omega;\R^2): \varphi=0\mbox{ on }\Gamma_u \right\}\quad\mbox{and}\quad \mathcal{W}= \rL^2_0(G;\R),
        \end{equation*} 
        and consider  the functions $a_{\bar{\xi}}$ and $b_{\bar{\xi}}$ defined as in \eqref{def:abar} for $\bar{\xi}=1$. 
        %, i.\ e.,    
        %  \begin{equation*}
      %  a_1(x,y,z) := \frac{1}{(1-\delta z)} \quad \text{ and } \quad b_1(x,y,z) := \frac{(1-\delta z)}{\delta^2 }.
  %  \end{equation*}
    Suppose now that $(\zeta,v)$ is a strong  solution of \eqref{eq:resolventhstokes} for  $\lambda=0$.  Multiplying equation \eqref{eq:resolventhstokes}$_1$ and \eqref{eq:resolventhstokes}$_2$
    with  $(\psi,\varphi)\in \mathcal{W}\times\mathcal{V}$ and integrating by parts we verify  that the weak formulation of \eqref{eq:resolventhstokes} reads as  
        %\lambda (\zeta,\psi)_{L^2(G)}\lambda (v,\varphi)_{L^2(\Omega)} + +
\begin{equation}
	\left\{
	\begin{aligned}
          (\divH \bar{v},\psi)_{\rL^2(G)}  & = (f_1,\psi)_{\rL^2(G)},\\
          \mu \left (a_1 \nablaH v ,\nablaH \varphi\right )_{2} + \mu\left (b_1 \dz v, \dz \varphi\right )_{2} + \mu'\left (a_1 \divH v , \divH {\varphi} \right )_{2}
          - (\zeta, \divH \bar{\varphi} )_{\rL^2(G)}  &= (f_2,\varphi)_{2}.
        \end{aligned}
	\right. 
	\label{eq:weakstokesG}
\end{equation}
We then define the sesquilinear forms $\mathcal{C}: \mathcal{V}\times \mathcal{V}\rightarrow\mathbb{C}$ and  $\mathcal{D}:\mathcal{V}\times \mathcal{W}\rightarrow\mathbb{C}$ by 
\begin{equation*}
 \begin{aligned}
   \mathcal{C}(v,\varphi)&:=\mu \left (a_1 \nablaH v ,\nablaH \varphi\right )_{\rL^2(\Omega)} + \mu\left (b_1 \dz v, \dz \varphi\right )_{\rL^2(\Omega)} +
   \mu'\left (a_1 \divH v , \divH {\varphi} \right )_{\rL^2(\Omega)}, \\
 \mathcal{D}(v,\psi)&:= -(\psi, \divH \bar{v})_{\rL^2(G)},
    \end{aligned}
\end{equation*}
and set
\begin{equation*}
     \langle \chi_1,\psi\rangle:= (-f_1,\psi)_{\rL^2(G)},\quad \langle \chi_2,\psi\rangle:= (f_2,\varphi)_{\rL^2(\Omega)}.
\end{equation*}
Then  $\mathcal{C}$ and  $\mathcal{D}$ are bounded sesquilinear forms, since $a_1$ and $b_1$ are uniformly bounded and non-degenerate. Observe that $\chi_1\in \mathcal{W}^{*}$ and
$\chi_2\in \mathcal{V}^{*}$. This allows us to rephrase equation \eqref{eq:weakstokesG} as 
\begin{equation}
	\left\{
	\begin{aligned}
          \mathcal{D}(v,\psi)  & = \langle \chi_1,\psi\rangle ,\quad  \psi\in \mathcal{W},\\
          \mathcal{C}(v,\varphi) + \mathcal{D}(\varphi,\zeta)  &= \langle \chi_2,\psi\rangle,\quad  \varphi\in \mathcal{V}.
        \end{aligned}
	\right. 
	\label{eq:BBstokesG}
\end{equation}
 We use the identity: $\operatorname{curl}_\H^{\top}(\operatorname{curl}_\H v )= \nablaH \operatorname{div}_\H v - \Delta_\H v$ to obtain
        \begin{equation*}
            \begin{aligned}
                 \mathcal{C}(v,v) = \mu a_1 \| \operatorname{curl}_\H v \|^2_{\rL^2(\Omega)} + \mu b_1 C\| \partial_z v \|^2_{\rL^2(\Omega)} + (\mu+\mu') \| \divH v \|^2_{\rL^2(\Omega)}.
            \end{aligned}
        \end{equation*}
The  coerciveness of $\mathcal{C}$ will follow from the fact that $\mu>0$, $\mu+\mu' >0$ and the coefficients $a_1$, $b_1$ are
uniformly bounded from below. The inf-sup condition for $\mathcal{D}$ follows from \cite[Lemma 3.2]{HK:16}. We apply the Babu\v ska-Brezzi theorem to conclude that there exists a unique solution $(\zeta,v)\in \mathcal{W}\times \mathcal{V}$ to
\eqref{eq:BBstokesG}. Employing the method of difference quotients we  obtain higher regularity estimates and thus  $(\zeta,v) \in \rH_\per^{1,2}(G,\R) \cap \rL^{2}_0(G;\R) \times \mathrm{Y}$.

\vspace{.1cm}
\noindent
{\em Step 2: the case $q>2$} \mbox{}\\
We reduce the problem to a two-dimensional Stokes equation and an elliptic boundary value problem.  Multiplying \eqref{eq:resolventhstokes}$_2$ with  $(1-\delta z)$ and taking vertical averages yields
    \begin{equation}
	\left\{
	\begin{aligned}
          - \mu\Delta_\H \bar{v}  +\nablaH \Tilde{\zeta}  &= \bar{(1-\delta z)f_2} + \mu \frac{(1-\delta)^2}{\delta^2} (\dz v)|_{ \Gamma_u} +\mu \frac{1}{\delta} v|_{\Gamma_b}
          +\mu \bar{v}  +\mu' \nablaH f_1, &&\text{ in } G ,  \\
		  %\frac{1}{p} , &&\text{ on } \Omega \times (0,T),\\
          \divH \bar{v} &= f_1, &&\text{ in }G,\\
        \end{aligned}
	\right. 
	\label{eq:stokesG}
\end{equation}
where $\Tilde{\zeta} = \left (1- \frac{\delta}{2}\right )\zeta $. If $\zeta$ is known, the velocity $v$ may then be determined by the elliptic problem

\begin{equation}
\left\{
\begin{aligned}
- a_1\mu \DeltaH v -\mu  \dz \left ( b_1 \dz v \right ) - a_1 \mu' \nablaH \divH v  &= f_2 - \nablaH \zeta, &&\text{ on } \Omega ,  \\
v &= 0, &&\text{ on } \Gamma_u  \\
\partial_z v &= 0, && \text{ on } \Gamma_b. 
\end{aligned}
\right. 
\label{eq:inhomLame}
 \end{equation}
Assume now that $2<q \leq4$. Then $f_1 \in \rH_\per^{1,2}(G;\R) \cap \rL^2_0(G;\R)$, $f_2 \in \rL^2(\Omega;\R^2)$ and there exists a unique solution $(\zeta,v) \in \rH_\per^{1,2}(G;\R) \cap \rL^2_0(G;\R) \times \mathrm{Y}$
 to \eqref{eq:resolventhstokes}. Note  that \eqref{eq:stokesG} combined with  \eqref{eq:inhomLame} are equivalent to \eqref{eq:resolventhstokes}. Hence, equations \eqref{eq:stokesG}
 and \eqref{eq:inhomLame} have a unique, strong  solution as well. By Sobolev embeddings and the trace theorem, each term on the right-hand side of \eqref{eq:stokesG}$_1$
 belongs to $\rL^q(G;\R^2)$ and we also see that the right-hand side $f_1$ of \eqref{eq:stokesG}$_2$ lies in $\rH_\per^{1,q}(G;\R)$. Applying \cite[Lemma~3.5]{HK:16}, we deduce that
 $\Tilde{\zeta} \in \rH_\per^{1,q}(G;\R) \cap \rLq_0(G;\R)$, which implies the same regularity for $\zeta$. Thus each term on the right-hand side of \eqref{eq:inhomLame} lies in $\rLq(\Omega;\R^2)$.
 Hence there exists  a unique, strong  solution $v \in Y$ to \eqref{eq:inhomLame} by the results in \cite{DHP:03} and thus $(\zeta,v) \in  \D(A_{\mathrm{CHS}}) \cap \rX_m$.
 We now may repeat this  argument for $4<q<\infty$ to conclude that the solution constructed in the $\rL^2-$framework
 admits $\rH^{1,q} - \rH^{2,q}-$regularity.
\end{proof}

The following  lemma concerns the solvability of the resolvent problem \eqref{eq:resolventhstokes} for $\lambda \in \C\backslash\{0\}$  satisfying $\Re \  \lambda  \geq 0$.

\begin{lem}
    \label{lem:reshstokes}
    Let $\lambda \in \C \setminus \{ 0\}$ with $\Re \  \lambda \geq0$, $q \in [2, \infty)$,  $(f_1,f_2) \in \rX_m $. Then there exists a unique solution
    $(\zeta,V) \in \D(A_{\mathrm{CHS}}) \cap \rX_m$ of \eqref{eq:resolventhstokes} satisfying
    \begin{equation}\label{eq:estimateinvert2}
        \| \zeta \|_{ \rH^{1,q}(G)}+| \lambda|\|V \|_{\rL^q(\Omega)}+\| V\|_{\rH^{2,q}(\Omega)}  \leq C \left ( \| f_1 \|_{ \rH^{1,q}(G)}+ \| f_2\|_{\rLq(\Omega)} \right ).
    \end{equation}
for some $C>0$. 
  \end{lem}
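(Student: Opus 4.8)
The plan is to eliminate the density for $\lambda\neq0$. Integrating \eqref{eq:resolventhstokes}$_1$ over $G$ and using the periodicity of $V$ gives $\int_G\zeta\d(x,y)=\lambda^{-1}\int_Gf_1\d(x,y)=0$ since $f_1\in\rL^q_0(G)$, so any solution automatically lies in $\rX_m$, and \eqref{eq:resolventhstokes}$_1$ reads $\zeta=\lambda^{-1}(f_1-\divH\bar V)$. Substituting this into \eqref{eq:resolventhstokes}$_2$ reduces the problem to the single second-order equation
\begin{equation*}
\lambda V-\frac{\mu\DeltaH V}{1-\delta z}-\mu\dz\Big(\frac{1-\delta z}{\delta^2}\dz V\Big)-\frac{\mu'\nablaH\divH V}{1-\delta z}-\frac1\lambda\nablaH\divH\bar V=f_2-\frac1\lambda\nablaH f_1\quad\text{on }\Omega
\end{equation*}
for $V\in\mathrm Y$, and conversely any $V\in\mathrm Y$ solving this together with the associated $\zeta$ solves \eqref{eq:resolventhstokes}. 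Uniqueness for every $\lambda$ with $\Re\lambda\ge0$, $\lambda\neq0$, follows from the energy identity obtained by testing \eqref{eq:resolventhstokes}$_1$ with $\zeta$ and \eqref{eq:resolventhstokes}$_2$ with $V$, adding, and taking real parts: the coupling terms cancel and, with $\mathcal C$ the sesquilinear form from the proof of \autoref{lem:invert}, one gets
\begin{equation*}
\Re\lambda\bigl(\|\zeta\|_{\rL^2(G)}^2+\|V\|_{\rL^2(\Omega)}^2\bigr)+\mathcal C(V,V)\le\|f_1\|_{\rL^2(G)}\|\zeta\|_{\rL^2(G)}+\|f_2\|_{\rL^2(\Omega)}\|V\|_{\rL^2(\Omega)}.
\end{equation*}
Since $\mu>0$, $\mu+\mu'>0$ and the coefficients are bounded below, $\mathcal C(V,V)\ge c\|V\|_{\rH^{1,2}(\Omega)}^2$ by the curl identity and Poincar\'e's inequality (the Dirichlet condition on $\Gamma_u$); for $f_1=f_2=0$, as $\Re\lambda\ge0$, this forces $V=0$, whence $\lambda\zeta=-\divH\bar V=0$ and $\zeta=0$.

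For existence, the case $\lambda=0$ is \autoref{lem:invert}. For $\lambda\neq0$ with $\Re\lambda\ge0$ I would first treat $q=2$ by the same Babu\v ska--Brezzi argument as in Step~1 of the proof of \autoref{lem:invert}, applied to the weak formulation \eqref{eq:BBstokesG} augmented by the terms $\lambda(\zeta,\psi)_{\rL^2(G)}$ and $\lambda(V,\varphi)_{\rL^2(\Omega)}$: since $\Re\lambda\ge0$ these contributions are accretive (equivalently, after eliminating $\zeta$ the extra form $\varphi\mapsto\lambda(V,\varphi)_{\rL^2(\Omega)}+\lambda^{-1}(\divH\bar V,\divH\bar\varphi)_{\rL^2(G)}$ has nonnegative real part, using $\Re(1/\lambda)=\Re\lambda/|\lambda|^2\ge0$), so the coercivity of $\mathcal C$ together with the inf-sup condition for $\mathcal D$ from \cite[Lemma~3.2]{HK:16} still yields a unique $(\zeta,V)\in\rL^2_0(G)\times\mathcal V$, and the difference-quotient argument of \autoref{lem:invert} upgrades this to $(\zeta,V)\in\rH^{1,2}_\per(G;\R)\cap\rL^2_0(G)\times\mathrm Y$. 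The passage to general $q\in[2,\infty)$ is then carried out exactly as in Step~2 of that proof: multiplying \eqref{eq:resolventhstokes}$_2$ by $(1-\delta z)$ and averaging in $z$ produces a two-dimensional Stokes system of the form \eqref{eq:stokesG} for $(\bar V,\widetilde\zeta)$, now carrying the additional \emph{lower-order} term $\lambda\,\overline{(1-\delta z)V}$ on the right-hand side, while $V$ is recovered from the interior elliptic problem \eqref{eq:inhomLame}; bootstrapping the Sobolev exponent gives $(\zeta,V)\in\D(A_{\mathrm{CHS}})\cap\rX_m$.

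It remains to prove the estimate \eqref{eq:estimateinvert2} with a constant $C$ uniform in $\lambda$, for which I would split into three ranges. For $|\lambda|\ge R_0$ with $\Re\lambda\ge0$ this is immediate from \autoref{prop:rsec}: $-A_{\mathrm{CHS}}+\omega$ is sectorial of angle $<\pi/2$ and $A_{\mathrm{CHS}}$ leaves $\rX_m$ invariant (the density component of $A_{\mathrm{CHS}}U$ has vanishing mean over $G$ by periodicity), so once $|\lambda|$ is large the point $\omega-\lambda$ lies outside the relevant sector with $|\omega-\lambda|\ge|\lambda|/2$; hence $\|\lambda(\lambda-A_{\mathrm{CHS}})^{-1}\|_{\mathcal L(\rX_0)}\le M$, and closedness of $A_{\mathrm{CHS}}$ on $\D(A_{\mathrm{CHS}})=\rX_1$ upgrades this to $\|(\zeta,V)\|_{\rX_1}\le C\|(f_1,f_2)\|_{\rX_0}$ together with $|\lambda|\|V\|_{\rL^q(\Omega)}\le|\lambda|\|(\zeta,V)\|_{\rX_0}\le M\|(f_1,f_2)\|_{\rX_0}$. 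For $|\lambda|\le\lambda_1:=\tfrac12\|(-A_{\mathrm{CHS}})^{-1}\|_{\mathcal L(\rX_m)}^{-1}$ one writes $\lambda-A_{\mathrm{CHS}}=(-A_{\mathrm{CHS}})\bigl(I+\lambda(-A_{\mathrm{CHS}})^{-1}\bigr)$ on $\rX_m$, with $(-A_{\mathrm{CHS}})^{-1}$ the bounded inverse of \autoref{lem:invert}, and a Neumann series shows $(\lambda-A_{\mathrm{CHS}})^{-1}=\bigl(I+\lambda(-A_{\mathrm{CHS}})^{-1}\bigr)^{-1}(-A_{\mathrm{CHS}})^{-1}$ is bounded from $\rX_m$ to $\D(A_{\mathrm{CHS}})\cap\rX_m$ uniformly for $|\lambda|\le\lambda_1$ (the weight $|\lambda|\|V\|_{\rL^q}$ being harmless for bounded $\lambda$). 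Finally, on the compact annulus $\{\lambda_1\le|\lambda|\le R_0,\ \Re\lambda\ge0\}$ the resolvent $(\lambda-A_{\mathrm{CHS}})^{-1}$, which exists by the construction above and is analytic in $\lambda$ on the resolvent set, is uniformly bounded. Taking the maximum of the three constants yields \eqref{eq:estimateinvert2}.

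I expect the main obstacle to be making \eqref{eq:estimateinvert2} genuinely uniform across the closed right half-plane while it is patched from structurally different regimes: the recovery $\zeta=\lambda^{-1}(f_1-\divH\bar V)$ carries a removable singularity at $\lambda=0$, so the small-$\lambda$ bound cannot be read off the $q=2$ construction (whose data norm degenerates like $|\lambda|^{-1}$) but must be extracted from \autoref{lem:invert} via the Neumann series, whereas at $|\lambda|\to\infty$ the parabolic weight $|\lambda|\|V\|_{\rL^q}$ is available only through the $\mathcal H^\infty$-calculus of \autoref{prop:rsec}. Checking that the inf-sup/pressure and difference-quotient estimates of \autoref{lem:invert}, and the $q$-bootstrap through \eqref{eq:stokesG}--\eqref{eq:inhomLame}, survive the additional $\lambda$-dependent lower-order terms, and that the constants from the three ranges glue to a single $\lambda$-independent $C$, is where the real work lies.
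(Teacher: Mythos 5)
Your plan is substantively the one the paper follows: eliminate $\zeta$ via $\zeta=\lambda^{-1}(f_1-\divH\bar V)$, solve the resulting single equation for $V$ at $q=2$ variationally, then bootstrap to higher $q$ by averaging in $z$. Your additions (the energy argument for uniqueness, the explicit three-range argument for the $\lambda$-uniform constant) are correct and fill in details the paper leaves implicit or defers to \autoref{cor:expstable}. Two points where your wording departs from what actually works, however, deserve correction.

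First, in the $q>2$ bootstrap you say that multiplying \eqref{eq:resolventhstokes}$_2$ by $(1-\delta z)$ and averaging produces ``a two-dimensional Stokes system of the form \eqref{eq:stokesG} for $(\bar V,\widetilde\zeta)$.'' For $\lambda\neq0$ this is not quite right: the first equation $\lambda\zeta+\divH\bar V=f_1$ is no longer a pure divergence constraint, so neither the Stokes structure nor the regularity lift behind \cite[Lemma~3.5]{HK:16} applies directly. The paper instead uses the elimination once more, substituting $\zeta=\lambda^{-1}(f_1-\divH\bar V)$ into the averaged equation to arrive at the two-dimensional Lam\'e-type equation \eqref{eq:lamevbar},
\begin{equation*}
-\mu\DeltaH\bar v-(\mu'+\lambda')\nablaH\divH\bar v=\bar{(1-\delta z)f_2}-\lambda'\nablaH f_1+\mu\tfrac{(1-\delta)^2}{\delta^2}(\dz v)|_{\Gamma_u}+\tfrac{\mu}{\delta}v|_{\Gamma_b}+\mu\bar v-\lambda\bar{(1-\delta z)v},
\end{equation*}
with $\lambda'=\lambda^{-1}(1-\tfrac{\delta}{2})$ and $\Re\lambda'\ge0$, and then recovers $V$ from the vertical elliptic problem \eqref{eq:inhomLame1}. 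This is a scalar elliptic (Lam\'e) problem, not a saddle-point Stokes problem, and it is what makes the $\rLq$-bootstrap go through cleanly. You should eliminate $\zeta$ again here rather than appeal to \eqref{eq:stokesG}.

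Second, your suggestion to run Babu\v ska--Brezzi on the coupled weak formulation \eqref{eq:BBstokesG} ``augmented by $\lambda(\zeta,\psi)$ and $\lambda(V,\varphi)$'' is not the standard mixed-variational setting: the extra term $\lambda(\zeta,\psi)$ sits in the constraint equation, turning the saddle-point into a perturbed (penalized) one, for which the clean inf-sup plus coercivity-on-the-kernel machinery does not directly apply. The parenthetical you include --- eliminate $\zeta$ first and apply Lax--Milgram to the single form $\mathcal{C}_\lambda$ whose extra contributions have nonnegative real part since $\Re(1/\lambda)\ge0$ --- is exactly what the paper does and is the route to follow; drop the Babu\v ska--Brezzi variant or flag that it needs the perturbed saddle-point theory.
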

\begin{proof}
    As before we first the case $q=2$ and use then a bootstrap argument to obtain the case $q>2$. \\
   {\em Step 1: the case $q=2$} \mbox{} \\ Let us fix $\lambda\in \C \setminus \{ 0\}$ with $\Re \  \lambda \geq0$.
        Using \eqref{eq:resolventhstokes}$_1$ we find $\zeta = \frac{1}{\lambda}(f_1 - \divH \bar{v})$ and hence the equation \eqref{eq:resolventhstokes} can be rewritten as
        \begin{equation}\label{eq:ressimple}
            \lambda v - a_1 \mu \Delta v - \mu \dz ( b_1 \dz v)-a_1 \mu' \nablaH \divH v - \frac{1}{\lambda} \nablaH \divH \bar{v} = f_1 - \frac{1}{\lambda} \nablaH f_2,  \text{ in }\Omega,
        \end{equation}
        where $a_1$ and $b_1$ are given as in the proof of \autoref{lem:invert}.
        Consider the weak formulation of the above problem given by
        \begin{equation}\label{eq:restweak}
            \begin{aligned}
                \lambda(v,\varphi)_{2}+ \mu \left (a_1 \nablaH v ,\nablaH \varphi\right )_{2} + \mu\left (b_1 \dz v, \dz \varphi\right )_{2} + \mu'\left (a_1 \divH v , \divH {\varphi} \right )_{2}\\
                + \frac{1}{\lambda}(\divH \bar{v}, \divH \bar{\varphi})_{\rL^2(G)} 
                = \bigl (f_2- \frac{1}{\lambda} \nablaH f_1, \varphi \bigr )_{2}
            \end{aligned}
        \end{equation}
        for $\varphi \in \mathcal{V}$. We rephrase the equation \eqref{eq:restweak} as 
        \begin{equation*}
            \mathcal{C}_\lambda (v,\varphi) = (f_\lambda,\varphi)_{\rL^2(\Omega)}, \quad \text{ for all }\ \varphi \in \mathcal{V},
        \end{equation*}
        and note that by H\"older's inequality it is clear that $\mathcal{C}_\lambda :\mathcal{V} \times \mathcal{V} \rightarrow \C $ is a bounded sesquilinear form. Since $a_1$ and $b_1$ are uniformly bounded from below, we conclude that  $\mathcal{C}_\lambda$ is coercive as in the proof of Lemma \ref{lem:invert}. Now we apply Lax-Milgram theorem to conclude that there exists a unique solution $v \in \mathcal{V}$. We employ the method of difference quotients to obtain higher regularity estimates and to obtain $v \in \rH_\per^2(\Omega;\R^2)$ as well as $\zeta \in \rH_\per^{1}(G;\R) \cap \rL^2_0(G\R)$ from \eqref{eq:resolventhstokes}$_1$.
\vspace{.1cm}        
\noindent \\
{\em Step 2: the case $q>2$} \mbox{} \\
Averaging \eqref{eq:ressimple} we obtain the following  inhomogeneous two-dimensional Lam\'{e} type equation in $G$ 
\begin{equation}\label{eq:lamevbar}
  - \mu  \Delta_\H  \bar{v}  - \left ( \mu' + \lambda' \right ) \nablaH \divH\bar{v}  = \bar{(1- \delta z )f_2} - \lambda' \nablaH f_2
  + \mu \frac{(1-\delta)^2}{\delta^2} (\dz v)|_{ \Gamma_u} +\mu \frac{1}{\delta} v|_{\Gamma_b} +\mu \bar{v} - \lambda \bar{(1-\delta z) v},
\end{equation}
where $\lambda' = \frac{1}{\lambda} \left ( 1 -\frac{\delta}{2}\right )>0$. Note that if $\bar{v}$ is known we may find  $V$ by solving the equation 
      \begin{equation}
	\left\{
	\begin{aligned}
        \lambda v - a_1\mu \DeltaH v -\mu  \dz \left ( b_1 \dz v \right ) - a_1 \mu' \nablaH \divH v  &= f_2 + \frac{1}{\lambda} \nablaH \divH \bar{v}, &&\text{ on } \Omega ,  \\
          v &=0, &&\text{ on } \Gamma_u, \\
        \dz v &= 0, &&\text{ on } \Gamma_bv. 
\end{aligned}
\right.
\label{eq:inhomLame1}
 \end{equation}

Note that solving \eqref{eq:resolventhstokes} is equivalent to solving the two-dimensional problem \eqref{eq:lamevbar} as well as the elliptic boundary value problem \eqref{eq:inhomLame1}.
Now a bootstrap argument as in the proof of \autoref{lem:invert} can be used to deduce the  existence of  a unique solution $(\zeta,v)$ to  \eqref{eq:resolventhstokes} within the $\rLq$-framework. 
\end{proof}

We are finally  in position to prove that the part of $A_{\mathrm{CHS}}$ on $\rX_m \coloneqq \rH_\per^{1,q}(G;\R) \cap \rL^q_0(G;\R) \times \rL^q(\Omega;\R^2)$ generates an exponentially stable semigroup.

\begin{prop}\label{cor:expstable}
Let $q \in [2,\infty)$. The compressible hydrostatic Stokes operator $A_{\mathrm{CHS}}$ defined on $\rX_m$ generates an exponentially stable semigroup  $(\mathrm{e}^{tA_{\mathrm{CHS}}})_{t\geq 0}$ on $\rX_m$.
.
%In particular, there exist constants $C>0$ and $\eta_0>0$ such that
%    \begin{equation}\label{eq:expest}
%        \left \| \mathrm{e}^{t \mathcal{A}}(p_0, v_0)^\top \right \|_{\rX_m} \leq C \mathrm{e}^{-\eta_0t}\left \| (p_0,v_0)^\top\right \|_{\rX_m}.
%    \end{equation}
%    Moreover let $\eta \in (0, \eta_0)$ then the part of $\mathcal{A}+ \eta$ in $\rX_m$ generates an exponentially stable semigroup $ \left (\mathrm{e}^{t\left (\mathcal{A} + \eta \right )} \right )_{t\geq 0}%$ on $\rX_m$.
\end{prop}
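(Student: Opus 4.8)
The plan is to localise the spectrum of the part of $A_{\mathrm{CHS}}$ in $\rX_m$ strictly inside the open left half-plane and then to upgrade this to exponential decay using the analyticity of the semigroup. As a preliminary step I would record that $\rX_m$ is a closed subspace of $\rX_0$ that is invariant under $(\mathrm{e}^{tA_{\mathrm{CHS}}})_{t\geq0}$: integrating the first component of the resolvent problem \eqref{eq:resolventhstokes} over $G$ and using the lateral periodicity yields $\int_G \divH \bar V \,\d(x,y)=0$, so $(\lambda-A_{\mathrm{CHS}})^{-1}$ maps $\rX_m$ into $\rX_m$ for $\Re\lambda$ large (\autoref{prop:rsec}). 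Consequently $A_{\mathrm{CHS}}|_{\rX_m}$, with domain $\D(A_{\mathrm{CHS}})\cap\rX_m$, generates the restricted semigroup, which is again analytic by \autoref{prop:rsec}.

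Next I would combine \autoref{lem:invert} (the case $\lambda=0$) with \autoref{lem:reshstokes} (the case $\lambda\neq0$, $\Re\lambda\geq0$) to conclude that the closed half-plane $\{\Re\lambda\geq0\}$ lies in $\rho(A_{\mathrm{CHS}}|_{\rX_m})$, and that the bounds \eqref{eq:estimateinvert}--\eqref{eq:estimateinvert2}, read in the norm $\|(\zeta,V)\|_{\rX_m}=\|\zeta\|_{\rH^{1,q}(G)}+\|V\|_{\rLq(\Omega)}$, give a uniform estimate $M:=\sup_{\Re\lambda\geq0}\|(\lambda-A_{\mathrm{CHS}}|_{\rX_m})^{-1}\|_{\mathcal{L}(\rX_m)}<\infty$. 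A Neumann series expansion of the resolvent around points of the imaginary axis then extends it, with a uniform bound, to the half-plane $\{\Re\lambda>-\tfrac{1}{2M}\}$; hence $\sigma(A_{\mathrm{CHS}}|_{\rX_m})\subset\{\Re\lambda\leq-\tfrac{1}{2M}\}$ and in particular the spectral bound satisfies $s(A_{\mathrm{CHS}}|_{\rX_m})<0$. (Alternatively, $\{\Re\lambda\geq0\}\subset\rho(A_{\mathrm{CHS}}|_{\rX_m})$ together with sectoriality of angle $<\pi/2$ already forces the spectrum to stay bounded away from the imaginary axis.)

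To finish, since the restricted semigroup is analytic and therefore immediately norm-continuous, its growth bound coincides with its spectral bound (see, e.g., \cite{PS:16}); thus $\omega_0(A_{\mathrm{CHS}}|_{\rX_m})=s(A_{\mathrm{CHS}}|_{\rX_m})<0$, which is precisely the claimed exponential stability. I expect the main subtlety to be structural rather than computational: the choice of $\rX_m$ is essential, because on all of $\rX_0$ the operator $A_{\mathrm{CHS}}$ has a kernel (the constants in the density slot), so the zero-mean constraint $\int_G\zeta=0$ must be carried through the invariance argument; and, since we work in $\rLq$ with $q>2$ rather than in a Hilbert space, the Gearhart--Pr\"uss theorem is not directly available, so the half-plane resolvent information must be converted into decay via the analyticity of the semigroup. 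One should also be mindful that \eqref{eq:estimateinvert2} controls the density component without any gain in $\lambda$, which is harmless for the uniform resolvent bound but is exactly why the continuation into the left half-plane is carried out by a Neumann series rather than from a decay estimate on $\|\lambda(\lambda-A_{\mathrm{CHS}}|_{\rX_m})^{-1}\|$.
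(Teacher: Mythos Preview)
Your overall scheme---closed right half-plane in the resolvent set, uniform resolvent bound there, then analyticity to identify spectral and growth bounds---is precisely the paper's route. The difference lies in how the uniform bound $M=\sup_{\Re\lambda\ge0}\|(\lambda-A_{\mathrm{CHS}}|_{\rX_m})^{-1}\|<\infty$ is obtained.

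You take $M<\infty$ directly from \eqref{eq:estimateinvert}--\eqref{eq:estimateinvert2}. That step is not justified: \autoref{lem:reshstokes} is stated and proved for each fixed $\lambda$, and its proof passes through the form containing $\tfrac{1}{\lambda}(\divH\bar v,\divH\bar\varphi)_{\rL^2(G)}$ with right-hand side $f_2-\tfrac{1}{\lambda}\nablaH f_1$, so the constant produced by Lax--Milgram visibly degenerates as $\lambda\to0$ along $i\R$. The estimate \eqref{eq:estimateinvert2} thus gives pointwise boundedness of the resolvent on $\{\Re\lambda\ge0\}\setminus\{0\}$, not uniformity.

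The paper instead argues exactly as in your parenthetical alternative: \autoref{prop:rsec} supplies a uniform resolvent bound on a shifted sector $\omega+\Sigma_{\pi-\varphi}$ (with $\varphi<\pi/2$); the set $\{\Re\lambda\ge0\}\setminus(\omega+\Sigma_{\pi-\varphi})$ is compact and, by \autoref{lem:invert} and \autoref{lem:reshstokes}, contained in the resolvent set, so continuity of $\lambda\mapsto(\lambda-A_{\mathrm{CHS}})^{-1}$ furnishes a bound there. This yields $M<\infty$ without any uniformity claim on the constant in \eqref{eq:estimateinvert2}. You should promote this from an aside to the main argument. Your explicit verification that $\rX_m$ is resolvent-invariant is a point the paper passes over in silence and is worth retaining.
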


\begin{proof}
Combining \autoref{lem:invert} and \autoref{lem:reshstokes} we see  that $\{ \lambda \in \C \colon \Re\ \lambda \geq 0 \} \subset \rho(A_{\mathrm{CHS}})$. Moreover,
\autoref{prop:Hunendlich} implies that there exist constant $C>0$, $\omega \geq0$ and $\varphi \in [0,\pi/2)$ such that 
   \begin{equation*}
       \big \| (\lambda - A_{\mathrm{CHS}})^{-1} \big \|_{\mathscr{L}(\rX_m)} \leq C,
     \end{equation*}
for all $\lambda \in \omega + \sum_{\pi-\varphi}$. Moreover, the set $\{\lambda \in \C \colon \Re\ \lambda \geq 0 \} \setminus \{ \omega + \textstyle \sum_{\pi-\varphi}\}$
is compact and  we deduce the existence of a constant $C>0$ such that
   \begin{equation*}
         \big \|\left (\lambda - A_{\mathrm{CHS}}\right)^{-1} \big \|_{\mathscr{L}(\rX_m)} \leq C \quad  \mbox{ for all }\lambda \in \C \text{ with }\Re\ \lambda \geq 0.
   \end{equation*}
We thus  conclude that the spectral bound $s(A_{\mathrm{CHS}})$ of $A_{\mathrm{CHS}}$ satisfies $s(A_{\mathrm{CHS}}) = -\eta_0$ for some $\eta_0>0$. Since the spectral bound of generators of analytic semigroups coincide 
with the growth bound of the associated semigroups, see e.\ g., \cite[Theorem 5.1.12]{ABHN:11}, the proof is complete.      
\end{proof}

The fact that the spectral bound of $A_{\mathrm{CHS}}$ is strictly negative allows us to  sharpen the assertion of \autoref{prop:rsec} and to  deduce by \cite[Proposition~2.7]{DHP:03}
that $-A_{\mathrm{CHS}}+\omega$ admits a bounded $\mathcal{H}^\infty$-calculus of $\mathcal{H}^\infty$-angle $\Phi^\infty_{A_{\mathrm{CHS}}+ \omega} < \pi/2$ for all $\omega > -\eta_0$. 

\begin{cor}\label{cor:hunendlichshift}
  Let $q \in [2,\infty)$ and $\eta_0$ as above. Then $-A_{\mathrm{CHS}}+ \omega$ admits a bounded $\mathcal{H}^\infty$-calculus on $\rX_m$ of $\mathcal{H}^\infty$-angle $\Phi^\infty_{A_{\mathrm{CHS}}+\omega} < \pi/2$ for
  all $\omega > -\eta_0$
\end{cor}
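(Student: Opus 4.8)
The plan is to remove the size restriction on the shift in \autoref{prop:rsec} by exploiting the strict negativity of the spectral bound obtained in \autoref{cor:expstable}; this is precisely the situation handled by \cite[Proposition~2.7]{DHP:03}, which the discussion preceding the statement already anticipates. First I would transfer the bounded $\mathcal{H}^\infty$-calculus from $\rX_0$ to $\rX_m$. The subspace $\rX_m = (\rH_\per^{1,q}(G;\R)\cap\rL^q_0(G;\R))\times\rLq(\Omega;\R^2)$ is closed and complemented in $\rX_0$, being the kernel of the bounded projection $P(\zeta,V):=(\tfrac{1}{|G|}\int_G\zeta\,\d(x,y),0)$, whose range $\{(c,0):c\in\R\}$ is contained in $\ker A_{\mathrm{CHS}}$; moreover $A_{\mathrm{CHS}}$ leaves $\rX_m\cap\D(A_{\mathrm{CHS}})$ invariant, since the first component of $A_{\mathrm{CHS}}(\zeta,V)$ equals $-\bar{\xi}\,\divH\mathrm{avg}(V)$, which has vanishing mean over $G$ by the periodic boundary conditions. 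Hence $P$ commutes with the resolvent of $A_{\mathrm{CHS}}$, and the bounded $\mathcal{H}^\infty$-calculus of $-A_{\mathrm{CHS}}+\omega_1$ on $\rX_0$ furnished by \autoref{prop:rsec} for a suitable $\omega_1>0$ restricts to one for the part of $-A_{\mathrm{CHS}}+\omega_1$ in $\rX_m$, of the same angle $<\pi/2$.

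Next I would record that $-A_{\mathrm{CHS}}+\omega$ is sectorial on $\rX_m$ with spectral angle strictly less than $\pi/2$ for every $\omega>-\eta_0$. By \autoref{cor:expstable} the spectrum of $A_{\mathrm{CHS}}$ on $\rX_m$ satisfies $\sigma(A_{\mathrm{CHS}})\subset\{\Re\lambda\le-\eta_0\}$, and, as shown in its proof, $(\lambda-A_{\mathrm{CHS}})^{-1}$ is uniformly bounded on the closed right half-plane; combining this with the sectorial resolvent bound at infinity coming from \autoref{prop:rsec}, one obtains that $-A_{\mathrm{CHS}}+\omega$ is invertible, that $\sigma(-A_{\mathrm{CHS}}+\omega)\subset\{\Re z\ge\eta_0+\omega\}$ is contained in a sector $\Sigma_\psi$ with $\psi<\pi/2$, and that its resolvent satisfies the sectoriality estimate outside every slightly larger sector. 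Thus $-A_{\mathrm{CHS}}+\omega$ is sectorial on $\rX_m$ of angle below $\pi/2$ for all $\omega>-\eta_0$.

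With these two ingredients in hand, I would conclude by applying \cite[Proposition~2.7]{DHP:03} to the operator $-A_{\mathrm{CHS}}+\omega_1$, which admits a bounded $\mathcal{H}^\infty$-calculus of angle $<\pi/2$ on $\rX_m$, together with the constant shift $\omega_1-\omega>0$: writing $-A_{\mathrm{CHS}}+\omega=(-A_{\mathrm{CHS}}+\omega_1)-(\omega_1-\omega)$, the sectoriality of the right-hand side established above allows one to transfer the bounded $\mathcal{H}^\infty$-calculus to $-A_{\mathrm{CHS}}+\omega$ with $\Phi^\infty_{A_{\mathrm{CHS}}+\omega}<\pi/2$, for every $\omega>-\eta_0$. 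The point that requires care is precisely that the constant $\omega_1-\omega$ being subtracted need not be small, so sectoriality with angle below $\pi/2$ of the shifted operator is not automatic; it is recovered only because \autoref{cor:expstable} forces $\sigma(-A_{\mathrm{CHS}}+\omega)$ to remain in the open right half-plane and uniformly bounded away from the imaginary axis, which is exactly the input needed for \cite[Proposition~2.7]{DHP:03}.
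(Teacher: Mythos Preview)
Your proposal is correct and follows essentially the same approach as the paper, which simply invokes \cite[Proposition~2.7]{DHP:03} together with \autoref{prop:rsec} and the strict negativity of the spectral bound from \autoref{cor:expstable}. You spell out one point the paper leaves implicit, namely the passage from $\rX_0$ (where \autoref{prop:rsec} is stated) to the invariant complemented subspace $\rX_m$ via the mean-value projection; this is a genuine detail worth making explicit, and your argument for it is sound.
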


\autoref{cor:hunendlichshift} allows now to deduce maximal $\rLp$-$\rLq$-regularity estimates  for the linearized compressible system
\begin{equation}
	 	\left\{
	 	\begin{aligned}
                  \dt \zeta + \bar{\xi} \divH \bar{V}   &=f_1, &&\text{ on }G \times (0,\infty), \\
                  \dt  V  - \frac{\mu \DeltaH V }{(1-\delta z)\bar{\xi}}- \frac{\mu}{\bar{\xi}} \dz \big( \frac{1-\delta z}{\delta^2} \dz V \big ) -
                  \frac{\mu' \nablaH \divH V}{(1-\delta z)\bar{\xi}}  + \frac{\nablaH \zeta}{\bar{\xi}} &= f_2, &&\text{ on } \Omega \times (0,\infty),  \\
	 	\dz \zeta  &= 0  , &&\text{ on } \Omega \times (0,\infty), \\ 
	 	V &=0, &&\text{ on } \Gamma_u \times (0,\infty), \\
	 	\dz V &= 0 , &&\text{ on } \Gamma_b \times (0,\infty), \\
	 	(\zeta(0), V(0))  &=(\xi_0-\bar{\xi}, v_0)  .
	 	\end{aligned}
	 	\right. 
	 	\label{eq:commr}
	 \end{equation}
Note that the trace space $\rX_\gamma = (\rX_0,\rX_1)_{1-1/p}$ for the initial data $(\xi_0, v_0)$ is given by
\begin{equation*}
  \rX_\gamma \coloneqq \rH^{1,q}_\per(G;\R) \cap \rLq_0(G;\R) \times \rX_\gamma^v.
  \end{equation*}
  %\left \{
   % \begin{aligned}
    %  \{ &(\xi_0,v_0) \in \rH_\per^{1,q}(G;\R)\cap \rLq_0(G;\R) \times \rB_{qp,\per}^{2(1-\nicefrac{1}{p})}(\Omega;\R^2) \colon (\dz v_0)|_{\Gamma_b} =0, \ v_0|_{\Gamma_u} = 0  \}, \quad &&\frac{1}{2} + \frac{1}{2q} + \frac{1}{p} < 1, \\
    	%\{ & (\xi_0,v_0) \in  \rH_\per^{1,q}(G;\R) \times \rB_{qp,\per}^{2(1-\nicefrac{1}{p})}(\Omega;\R^2) \colon  v_0|_{\Gamma_u} = 0 \}, \quad &&\frac{1}{2q}+ \frac{1}{p} < 1, \\
    	 %& \rH^{1,q}(G;\R) \times \rB_{qp,\per}^{2(1-\nicefrac{1}{p})}(\Omega;\R^2), \quad &&1 <  \frac{1}{2q}+ \frac{1}{p}.
    %\end{aligned}
 	%\right.

It is now natural to decompose a given integrable function $f \colon G \to \R$ into 
\begin{equation*}
    f = f_m + f_{\mathrm{avg}}, \quad \text{ with } \quad \int_G f_m=0  \quad \text{ and } \quad f_{\mathrm{avg}} = \frac{1}{|G|}  \int_G f. 
  \end{equation*}

Integrating the first equation of \eqref{eq:commr}, we deduce that
	 \begin{equation*}
	 	\dt \zeta_{\mathrm{avg}} = f_{1, \mathrm{avg}} \text{ for } t>0 \quad\text { and } \quad \xi_{\mathrm{avg}}(0) =0.
              \end{equation*}
Thus  $ \zeta_{\mathrm{avg}} \in \rL^\infty(\R_+)$ provided  $f_{1,\mathrm{avg}} \in \rL^1(\R_+)$. In this case, the set of equations \eqref{eq:commr} is transformed into the set of equations
 \begin{equation}
	 	\left\{
	 	\begin{aligned}
                  \dt \zeta_m + \bar{\xi} \divH \bar{V}   &=f_1 - f_{1,\mathrm{avg}}, &&\text{ on }G \times (0,\infty), \\
                  \dt  V  - \frac{\mu \DeltaH V }{(1-\delta z)\bar{\xi}}- \frac{\mu}{\bar{\xi}} \dz \big ( \frac{1-\delta z}{\delta^2} \dz V \big ) -
                  \frac{\mu' \nablaH \divH V}{(1-\delta z)\bar{\xi}}  + \frac{\nablaH \zeta_m}{\bar{\xi}} &= f_2, &&\text{ on } \Omega \times (0,\infty),  \\
	 	\dz \zeta  &= 0  , &&\text{ on } \Omega \times (0,\infty), \\ 
	 	V &=0, &&\text{ on } \Gamma_u \times (0,\infty), \\
	 	\dz V &= 0 , &&\text{ on } \Gamma_b \times (0,\infty), \\
	 	(\zeta_m(0), V(0))  &=(\xi_0-\bar{\xi}, v_0)  .
	 	\end{aligned}
	 	\right. 
	 	\label{eq:mcommr}
	 \end{equation}

\begin{prop}\label{prop:CPEatmos}
 Let $\bar{\xi}>0$, $\eta_0 >0$  as above and $p,q \in [2,\infty)$ such that $1 \neq \frac{1}{p} + \frac{1}{2q}$ as well as $\frac{1}{2} \neq \frac{1}{p} + \frac{1}{2q}$. Moreover, assume that
 \begin{equation*}
 \begin{aligned}
 \mathrm{e}^{\eta(\cdot)} ( f_1, f_2) \in\E_{0,\infty}, \quad f_{1,\mathrm{avg}} \in \rL^1(\R_+)   \quad \text{ as well as }\quad (\xi_0, v_0) \in \rX_\gamma.
\end{aligned}
\end{equation*}
Then  the system \eqref{eq:mcommr} admits a unique, strong solution $(\zeta_m,v)$ for all $\eta \in (0,\eta_0)$ satisfying 
    \begin{equation*}
         \begin{aligned}
             \mathrm{e}^{\eta(\cdot)} ( \zeta_m , V) \in \E_{1,\infty}  \quad \text{ and }\quad  \zeta_{\mathrm{avg}}  \in \rL^\infty(\R_+).
         \end{aligned}
    \end{equation*}
Furthermore, there exists a constant $C>0$, depending only on $p$, $q$ and $\eta$, such that 
    \begin{equation}\label{eq:maxregest}
        \begin{aligned}
            &\|  \mathrm{e}^{\eta(\cdot)} ( \zeta_m , V) \|_{\E_{1,\infty} } + \| \zeta_{\mathrm{avg}} \|_{\rL^\infty(\R_+)} + \|   \mathrm{e}^{\eta(\cdot)} \dt \zeta_{\mathrm{avg}} \|_{\rL^\infty(\R_+)} \\
            &\leq C \big ( \| (\xi_0-\bar{\xi},v_0) \|_{\rX_\gamma} 
            + \|\mathrm{e}^{\eta(\cdot)} ( f_1, f_2) \|_{\E_{0,\infty} } + \|  f_{1,\mathrm{avg}} \|_{\rL^1(\R_+)}  \big ).
        \end{aligned}
    \end{equation}
\end{prop}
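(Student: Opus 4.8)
The plan is to read \eqref{eq:mcommr} as an abstract autonomous Cauchy problem for the compressible hydrostatic Stokes operator and to absorb the exponential weight into a spectral shift, so that everything reduces to autonomous maximal $\rL^p$-regularity on the half-line. First I would dispose of the spatial average: integrating \eqref{eq:mcommr}$_1$ over $G$ and using the periodicity of $\bar{V}$ gives $\dt\zeta_{\mathrm{avg}}=f_{1,\mathrm{avg}}$ with $\zeta_{\mathrm{avg}}(0)=0$, hence $\zeta_{\mathrm{avg}}(t)=\int_0^t f_{1,\mathrm{avg}}(s)\d s$, and the bounds on $\zeta_{\mathrm{avg}}$ and on $\mathrm{e}^{\eta(\cdot)}\dt\zeta_{\mathrm{avg}}$ follow by direct integration from $f_{1,\mathrm{avg}}\in\rL^1(\R_+)$ and the integrability of $\mathrm{e}^{\eta(\cdot)}f_1$. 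The mean-free part $U:=(\zeta_m,V)$ then solves $\dt U=A_{\mathrm{CHS}}U+F$ on $\rX_m$, with $F:=(f_1-f_{1,\mathrm{avg}},f_2)$, whose first component has zero $G$-average so that $F$ genuinely takes values in $\rX_m$ (here one uses boundedness of the averaging map on $\rH^{1,q}(G;\R)$), and with initial value $U(0)=(\xi_0-\bar{\xi},v_0)$, which lies in $\rX_\gamma=(\rX_m,\D(A_{\mathrm{CHS}})\cap\rX_m)_{1-1/p,p}$ by Assumption~(A).

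Next I would put $U_\eta:=\mathrm{e}^{\eta(\cdot)}U$ and $F_\eta:=\mathrm{e}^{\eta(\cdot)}F$, so that $\dt U_\eta=(A_{\mathrm{CHS}}+\eta)U_\eta+F_\eta$ with $U_\eta(0)=U(0)$; thus it suffices to prove maximal $\rL^p$-regularity on $\R_+$ for $-(A_{\mathrm{CHS}}+\eta)=-A_{\mathrm{CHS}}-\eta$, for each $\eta\in(0,\eta_0)$. Here \autoref{cor:hunendlichshift}, applied with the admissible shift $\omega=-\eta>-\eta_0$, shows that $-A_{\mathrm{CHS}}-\eta$ admits a bounded $\mathcal{H}^\infty$-calculus on $\rX_m$ of angle strictly less than $\pi/2$; and since $s(A_{\mathrm{CHS}})=-\eta_0$ by \autoref{cor:expstable}, one has $\sigma(A_{\mathrm{CHS}})\subset\{\Re\lambda\le-\eta_0\}$, hence $-A_{\mathrm{CHS}}-\eta$ has spectrum in $\{\Re\lambda\ge\eta_0-\eta\}$ and is in particular boundedly invertible. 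Since $\rX_m$ is a closed subspace of $\rH^{1,q}_\per(G;\R)\times\rLq(\Omega;\R^2)$ with $q\in(1,\infty)$, it is a UMD space, so a bounded $\mathcal{H}^\infty$-calculus of angle $<\pi/2$ on it gives, via $\mathcal{R}$-sectoriality, maximal $\rL^p$-regularity on the whole half-line (cf.\ \cite{DDHPV:04,DHP:03,HNVW:16,PS:16}). This produces a unique $U_\eta\in\rH^{1,p}(\R_+;\rX_m)\cap\rL^p(\R_+;\D(A_{\mathrm{CHS}})\cap\rX_m)$ with norm $\le C\big(\|F_\eta\|_{\rL^p(\R_+;\rX_m)}+\|U(0)\|_{\rX_\gamma}\big)$, $C=C(p,q,\eta)$. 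Using the invertibility estimate of \autoref{lem:invert} to identify the graph norm of $A_{\mathrm{CHS}}$ on $\rX_m$ with the $\rX_1$-norm yields $\D(A_{\mathrm{CHS}})\cap\rX_m\hookrightarrow\rX_1$, so $U_\eta\in\E_{1,\infty}$; undoing the substitution, bounding $\|F_\eta\|_{\rL^p(\R_+;\rX_m)}\le C\|\mathrm{e}^{\eta(\cdot)}(f_1,f_2)\|_{\E_{0,\infty}}$ (again by boundedness of the averaging map) and re-adding the $\zeta_{\mathrm{avg}}$-estimate from the first step gives \eqref{eq:maxregest}. Uniqueness of $(\zeta_m,V)$ is just uniqueness for the abstract Cauchy problem.

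I do not expect a genuine obstacle, since the analytic core — bounded $\mathcal{H}^\infty$-calculus of $-A_{\mathrm{CHS}}$ up to a shift, bounded invertibility on $\rX_m$, and the exponential stability / negative spectral bound — is already supplied by \autoref{prop:rsec}, \autoref{lem:invert}, \autoref{lem:reshstokes} and \autoref{cor:expstable}. The one point needing care is that the estimate must hold on the \emph{unbounded} interval $\R_+$ with a time-independent constant: this is precisely what the strictly negative spectral bound $-\eta_0$ buys, as it keeps $-A_{\mathrm{CHS}}-\eta$ boundedly invertible and $\mathcal{R}$-sectorial of angle $<\pi/2$ for every $\eta\in(0,\eta_0)$, which is also why the weight $\mathrm{e}^{\eta(\cdot)}$ with $\eta<\eta_0$ is admissible. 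The remaining steps — checking $F$ stays in $\rX_m$, identifying $\rX_\gamma$ as the trace space of $(\rX_m,\D(A_{\mathrm{CHS}})\cap\rX_m)$, and noting that the averaging projection and the constant-in-time shift are harmless bounded operations — are routine.
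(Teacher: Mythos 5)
Your proof is correct and is essentially the fully spelled-out version of what the paper's one-line proof intends, namely that the result is a direct consequence of \autoref{cor:hunendlichshift}; you make explicit the standard exponential-weight substitution $U_\eta=\mathrm{e}^{\eta(\cdot)}U$, which turns the weighted estimate into plain maximal $\rL^p$-regularity on $\R_+$ for $-(A_{\mathrm{CHS}}+\eta)$, and you correctly note that this requires both the bounded $\mathcal H^\infty$-calculus of angle $<\pi/2$ (from \autoref{cor:hunendlichshift} with the admissible shift $\omega=-\eta>-\eta_0$) and the bounded invertibility on $\rX_m$ (from \autoref{lem:invert} / the negative spectral bound), exactly the two ingredients the paper assembles in the surrounding text. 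The treatment of $\zeta_{\mathrm{avg}}$ by direct integration of $\dt\zeta_{\mathrm{avg}}=f_{1,\mathrm{avg}}$ with zero initial value likewise reproduces the paper's preparatory remark preceding the proposition, so no genuinely different route is taken.
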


\begin{proof}
The proof is a direct consequence of \autoref{cor:hunendlichshift}. It yields  that the equation \eqref{eq:mcommr} admits a unique solution $(\zeta_m, V)$ satisfying \eqref{eq:maxregest}. 
\end{proof}

\subsection{Estimates  for  the nonlinear terms}\label{subsec:estnon}\mbox{}\\
In the following, we estimate the nonlinear terms given in \eqref{eq:nonlinearities2}. 
% Before proceeding with the estimates we briefly mention that similar properties of the Lagrangian transform as in \autoref{subsec:lagrangecor} can be achieved by choosing $\varepsilon$ appropriately small and% $v \in \mathbb{B}_\varepsilon$. This is important as we can no longer rely on making time arbitrarily small.
Given $\rX$ as in \autoref{subsec:lagrangecor}, we note that there is  a constant $C>0$ such that for all $\varepsilon \in ( 0, \frac{1}{2C})$ we have 
\begin{equation*}
    \begin{aligned}
        \| \nablaH \rX - \mathbb{I}_2\|_{\rL^\infty(\R_+ \times G)} &\leq C  \| \nablaH \rX - \mathbb{I}_2\|_{\rL^\infty(\R_+;\rH^{1,q}(G))} 
        \leq C \int_0^\infty \mathrm{e}^{-\eta t} \mathrm{e}^{\eta t} \| \nablaH \bar{V}(t,\cdot) \|_{\rH^{1,q}(G)} \d t \\
        &\leq C \big( \int_0^\infty   \mathrm{e}^{-p' \eta t} \d t \big)^{\nicefrac{1}{p'}} \|  \mathrm{e}^{\eta (\cdot )} V \|_{\E^v_{1,\infty}} 
        \leq C \varepsilon \leq \frac{1}{2}.
    \end{aligned}
\end{equation*}
Next, we conclude that $\det \nablaH \rX \geq C >0$ in $\R_+ \times G$ and obtain from \eqref{eq:ex of nablaH Y sea ice para-hyper} that 
$\|\mathrm{Z} \|_{\rL^\infty(\R_+; \rH^{1,q}(G))} \leq C$. Therefore,
\begin{equation*}
     \|\mathrm{Z}- \mathbb{I}_2 \|_{\rL^\infty(\R_+; \rH^{1,q}(G))} \leq    \|\mathrm{Z} \|_{\rL^\infty(\R_+; \rH^{1,q}(G))}  \|\nablaH \rX- \mathbb{I}_2 \|_{\rL^\infty(\R_+; \rH^{1,q}(G))} \leq C \varepsilon.
\end{equation*}
Notice that $\dt \nablaH \rX = \nablaH \bar{V}$ and hence 
\begin{equation*}
    \| \dt \nablaH \rX \|_{\rLp(\R_+;\rH^{1,q}(G))} \leq \| \mathrm{e}^{\eta (\cdot)} \nablaH \bar{V} \|_{\rLp(\R_+;\rH^{1,q}(G))} \leq C \varepsilon.
\end{equation*}
We also have 
\begin{equation*}
    \dt (\mathrm{Z}- \mathbb{I}_2 ) = \dt \mathrm{Z} = - (\nablaH \rX)^{-1} ( \dt \nablaH \rX ) (\nablaH \rX)^{-1} = -Z (\dt \nablaH \rX) Z,
\end{equation*}
and thus 
\begin{equation*}
    \| \dt (\mathrm{Z}- \mathbb{I}_2 ) \|_{\rLp(\R_+;\rH^{1,q}(G))} = \| \dt \mathrm{Z} \|_{\rLp(\R_+;\rH^{1,q}(G))} \leq C \varepsilon.
\end{equation*}

We fix now $\eta \in (0,\eta_0)$ and choose $\varepsilon$ so  small that the transform $\rX$ is indeed a $C^1$-diffeomorphism. Moreover, for $\varepsilon>0$ define 
\begin{equation}\label{def:ball}
  \mathbb{B}_\varepsilon \coloneqq \big \{(\zeta, V) \in \E_{1,\infty} \cap X_m \colon \| \mathrm{e}^{\eta(\cdot)}  (\zeta_m, V) \|_{\E_{1,\infty }}
  + \| \zeta_{\mathrm{avg}} \|_{\rL^\infty(\R_+)} + \|   \mathrm{e}^{\eta(\cdot)} \dt \zeta_{\mathrm{avg}} \|_{\rL^\infty(\R_+)}\leq \varepsilon \big \}.
    \end{equation}

We then obtain the following estimates for  the nonlinear terms.

\begin{prop}\label{prop:estnonlin}
Let $p,q \in (2,\infty)$. There exists a constant $C>0$,  depending only on $p$, $q$ and $\eta$, such that for all $\varepsilon$ sufficiently small and
all $(\zeta, V)  \in \mathbb{B}_{\varepsilon}$, the functions $F_1,F_2$ and $F_{1,\mathrm{avg}}$ satisfy
    \begin{equation*}
        \| \mathrm{e}^{\eta(\cdot)}(F_1,F_2)) \|_{\E_0,\infty} + \| F_{1,\mathrm{avg}}\|_{\rL^1(\R_+)} \leq C \varepsilon^2.
    \end{equation*}
  \end{prop}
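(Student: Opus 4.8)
The plan is to run the argument in close parallel with the proof of \autoref{lem:nonlinearestimates}, replacing the time-smallness factors $T^{\delta}$ by two ingredients: the a priori smallness built into $\mathbb{B}_{\varepsilon}$ (together with the bounds for $\mathrm{Z}-\mathbb{I}_2$, $\dt\mathrm{Z}$ and $\nablaH\mathrm{Z}=\nablaH(\mathrm{Z}-\mathbb{I}_2)$ obtained just above \eqref{def:ball}), and the exponential weight $\mathrm{e}^{\eta(\cdot)}$. The decisive structural observation is that \eqref{eq:CPEatmostransformed} is the linearization of \eqref{eq:compressibleprimitivegravity2} about the \emph{constant} state $(\bar{\xi},0)$, so that \emph{every} summand of $F_1,F_2$ in \eqref{eq:nonlinearities2} is at least quadratic in the perturbative quantities $\zeta_m$, $\zeta_{\mathrm{avg}}$, $V$ and $\mathrm{Z}-\mathbb{I}_2$; there is no linear term. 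Consequently, in each product I would place the full weight $\mathrm{e}^{\eta(\cdot)}$ on a single distinguished factor — a highest-order spatial derivative of $V$, or $\dt V$, or $\nablaH\zeta_m$ — which lies in the corresponding weighted $\rL^p$-space with norm $\le C\varepsilon$ by $\mathbb{B}_{\varepsilon}$, and estimate the remaining factors in $\rL^{\infty}$-type norms \emph{without} weight, using $\mathrm{e}^{-\eta t}\le 1$ to pass from weighted to unweighted norms when needed; writing $\mathrm{e}^{\eta(\cdot)}=\mathrm{e}^{\eta(\cdot)/2}\mathrm{e}^{\eta(\cdot)/2}$ with $\mathrm{e}^{\eta t/2}\le \mathrm{e}^{\eta t}$ handles the few (convective) terms where two factors must share the weight.

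First I would record the bounds used repeatedly. From $\mathbb{B}_{\varepsilon}$ and the trace embedding $\E_{1,\infty}\hookrightarrow\rL^{\infty}(\R_+;\rX_{\gamma})$ one gets $\|\mathrm{e}^{\eta(\cdot)}\zeta_m\|_{\rL^{\infty}(\R_+;\rH^{1,q}(G))}\le C\varepsilon$ and $\|\zeta_{\mathrm{avg}}\|_{\rL^{\infty}(\R_+)}\le\varepsilon$, hence, via $\rH^{1,q}(G)\hookrightarrow\rL^{\infty}(G)$ (valid for $q>2$), $\|\zeta\|_{\rL^{\infty}(\R_+;\rL^{\infty}(G))}\le C\varepsilon$ (unweighted); moreover $\|\mathrm{e}^{\eta(\cdot)}V\|_{\E^v_{1,\infty}}\le C\varepsilon$ together with the embeddings $\E^v_{1,\infty}\hookrightarrow\rL^{\infty}(\R_+;\rB^{2(1-1/p)}_{qp}(\Omega))\hookrightarrow\rL^{\infty}(\R_+;\rH^{1,q}(\Omega))$ (since $p>2$), $\E^v_{1,\infty}\hookrightarrow\rL^{2p}(\R_+;\rH^{1+1/q,q}(\Omega))$ from \eqref{eq:maxregembedding}, $\rH^{1,q}(\Omega)\hookrightarrow\rL^{\infty}(G;\rL^q(0,1))$ and $\rH^{2,q}(\Omega)\hookrightarrow\rL^{\infty}(G;\rH^{1,q}(0,1))$; and finally $\|\mathrm{Z}-\mathbb{I}_2\|_{\rL^{\infty}(\R_+;\rH^{1,q}(G))}\le C\varepsilon$, $\|\mathrm{Z}\|_{\rL^{\infty}(\R_+;\rH^{1,q}(G))}\le C$, $\|\dt\mathrm{Z}\|_{\rL^p(\R_+;\rH^{1,q}(G))}\le C\varepsilon$, whence $\|\nablaH\mathrm{Z}\|_{\rL^{\infty}(\R_+;\rL^q(G))}\le C\varepsilon$. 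I also use that vertical averaging is bounded on the spaces involved and that multiplication by $\bar{\xi}^{\pm1}$ and by $(1-\delta z)^{-1}$ is bounded, since $\bar{\xi}\in[M_1,M_2]$ and $1-\delta z\ge 1-\delta>0$.

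Next I would estimate the terms of \eqref{eq:nonlinearities2} one by one. For $F_1$: $\zeta\divH\bar V=\zeta_m\divH\bar V+\zeta_{\mathrm{avg}}\divH\bar V$ is bounded in $\rL^p(\R_+;\rH^{1,q}(G))$ by $\bigl(\|\zeta_m\|_{\rL^{\infty}(\rH^{1,q})}+\|\zeta_{\mathrm{avg}}\|_{\rL^{\infty}}\bigr)\|\mathrm{e}^{\eta(\cdot)}\divH\bar V\|_{\rL^p(\rH^{1,q})}$ using the Banach algebra $\rH^{1,q}(G)$; the term $(\zeta+\bar{\xi})\nablaH\bar V:[\mathrm{Z}^{\top}-\mathbb{I}_2]$ by $\bigl(\|\zeta\|_{\rL^{\infty}(\rH^{1,q})}+\bar{\xi}\bigr)\|\mathrm{e}^{\eta(\cdot)}\nablaH\bar V\|_{\rL^p(\rH^{1,q})}\|\mathrm{Z}-\mathbb{I}_2\|_{\rL^{\infty}(\rH^{1,q})}$. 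For $F_2$: the purely second-order $\mu$- and $\mu'$-terms $\partial^2V\,(\mathrm{Z}-\mathbb{I}_2)\,\mathrm{Z}$ are bounded by $C\|\mathrm{e}^{\eta(\cdot)}V\|_{\rL^p(\rH^{2,q})}\|\mathrm{Z}-\mathbb{I}_2\|_{\rL^{\infty}(\rL^{\infty}(G))}\|\mathrm{Z}\|_{\rL^{\infty}(\rL^{\infty}(G))}$; the $\mathrm{Z}\,\nablaH V\,\nablaH\mathrm{Z}$-type terms by $C\|\mathrm{Z}\|_{\rL^{\infty}(\rL^{\infty})}\|\mathrm{e}^{\eta(\cdot)}\nablaH V\|_{\rL^p(\rH^{1,q}(\Omega))}\|\nablaH\mathrm{Z}\|_{\rL^{\infty}(\rL^q(G))}$, where $\rH^{1,q}(\Omega)\hookrightarrow\rL^{\infty}(G;\rL^q(0,1))$ absorbs $\nablaH\mathrm{Z}\in\rL^q(G)$; the term $\zeta(\dt V)/\bar{\xi}$ by $C\|\zeta\|_{\rL^{\infty}(\rL^{\infty})}\|\mathrm{e}^{\eta(\cdot)}\dt V\|_{\rL^p(\rL^q(\Omega))}$; the horizontal convective term $(\zeta+\bar{\xi})\widetilde V\cdot\nablaH V\,\mathrm{Z}$ by the bilinear estimate $\|\widetilde V\cdot\nablaH V\|_{\rL^q(\Omega)}\le C\|\widetilde V\|_{\rH^{1+1/q,q}(\Omega)}\|V\|_{\rH^{1+1/q,q}(\Omega)}$ from \cite[Section~6]{GGHHK:20b}, Hölder in time, and \eqref{eq:maxregembedding}; and the pressure-type term $\nablaH\zeta_m\,(\mathrm{Z}^{\top}-\mathbb{I}_2)$ — which vanishes for $\mathrm{Z}=\mathbb{I}_2$, the linear part $\nablaH\zeta/\bar{\xi}$ having been moved to the left in \eqref{eq:commr} — by $C\|\mathrm{e}^{\eta(\cdot)}\nablaH\zeta_m\|_{\rL^p(\rL^q(G))}\|\mathrm{Z}-\mathbb{I}_2\|_{\rL^{\infty}(\rL^{\infty}(G))}$. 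Each of these is $\le C\varepsilon^2$.

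The summand I expect to be the main obstacle is the vertical-velocity contribution $(\zeta+\bar{\xi})(W\dz V)$, which has to be estimated anisotropically, as in \autoref{lem:nonlinearestimates}. Here I would use the Lagrangian pull-back of the representation \eqref{eq:w}, namely $(\zeta+\bar{\xi})W=-\int_0^z\bigl(\widetilde V\cdot(\mathrm{Z}^{\top}\nablaH\zeta_m)+(\zeta+\bar{\xi})\,\mathrm{Z}^{\top}\!:\nablaH\widetilde V\bigr)\,\d z'$ (using $\nablaH(\zeta+\bar{\xi})=\nablaH\zeta_m$). Minkowski's inequality in the $z$-integral gives $\|\mathrm{e}^{\eta(\cdot)}(\zeta+\bar{\xi})W\|_{\rL^{\infty}_t\rL^q_{\H}\rL^{\infty}_z}\le C\|\mathrm{e}^{\eta(\cdot)}(\,\cdots)\|_{\rL^{\infty}_t\rL^q(\Omega)}\le C\varepsilon$, and this is paired with $\|\mathrm{e}^{\eta(\cdot)}V\|_{\rL^p_t\rL^{\infty}_{\H}\rH^{1,q}_z}\le C\varepsilon$ (using $\rH^{1,q}_z\hookrightarrow\rL^{\infty}_z$ and $\rH^{2,q}(\Omega)\hookrightarrow\rL^{\infty}(G;\rH^{1,q}(0,1))$), so $\|\mathrm{e}^{\eta(\cdot)}(\zeta+\bar{\xi})W\dz V\|_{\rL^p(\rL^q(\Omega))}\le C\varepsilon^2$. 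Finally, $F_{1,\mathrm{avg}}$ requires no extra work: from the bound $\|\mathrm{e}^{\eta(\cdot)}F_1\|_{\rL^p(\R_+;\rH^{1,q}(G))}\le C\varepsilon^2$ just obtained and $\rH^{1,q}(G)\hookrightarrow\rL^1(G)$ (as $G$ is bounded), Hölder in time gives $\|F_{1,\mathrm{avg}}\|_{\rL^1(\R_+)}\le|G|^{-1}\|\mathrm{e}^{-\eta(\cdot)}\|_{\rL^{p'}(\R_+)}\|\mathrm{e}^{\eta(\cdot)}F_1\|_{\rL^p(\R_+;\rL^1(G))}\le C\varepsilon^2$, since $\|\mathrm{e}^{-\eta(\cdot)}\|_{\rL^{p'}(\R_+)}<\infty$. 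Summing the contributions proves the claim. The genuine difficulty throughout is the bookkeeping of which factor carries the weight: one must check in every product that the weight lands on a factor controlled by $\varepsilon$ in its weighted norm, while the remaining (possibly un-weighted) factors $\mathrm{Z}-\mathbb{I}_2$, $\nablaH\mathrm{Z}$, $\zeta_{\mathrm{avg}}$, $\zeta_m$ are $\mathrm{O}(\varepsilon)$ and $\mathrm{Z}$, $\bar{\xi}^{\pm1}$, $(1-\delta z)^{-1}$ are $\mathrm{O}(1)$ — which is precisely what the absence of linear terms makes possible.
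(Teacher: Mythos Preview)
Your proposal is correct and follows essentially the same approach as the paper: both exploit that every summand in \eqref{eq:nonlinearities2} is at least quadratic in $\zeta$, $V$, $\mathrm{Z}-\mathbb{I}_2$, place the exponential weight on one factor, and bound the remaining factors in $\rL^\infty$-type norms using the smallness built into $\mathbb{B}_\varepsilon$ and the Lagrangian bounds recorded before \eqref{def:ball}. Your treatment is in fact more detailed than the paper's on the vertical-velocity term $(\zeta+\bar{\xi})W\dz V$ (which the paper does not write out), and your handling of $F_{1,\mathrm{avg}}$ via H\"older against $\mathrm{e}^{-\eta(\cdot)}\in\rL^{p'}(\R_+)$ is a clean shortcut compared to the paper's direct term-by-term estimate; both are valid.
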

  
\begin{proof}
We note that for every $(\zeta, V)  \in \mathbb{B}_{\varepsilon}$ we have
    \begin{equation}
        \label{eq:sigmaball}
        \|\zeta\|_{\rL^\infty(\R_+; \rH^{1,q}(G))} \leq \| \mathrm{e}^{\eta(\cdot)}\zeta_m \|_{\rL^\infty(\R_+; \rH^{1,q}(G))} + \| \zeta_{\mathrm{avg}} \|_{\rL^\infty(\R_+)} \leq C\varepsilon,
    \end{equation}
    as well as 
    \begin{equation}
        \label{eq:Vball}
        \| V \|_{\rL^\infty(\R_+; \rH^{1,q}(\Omega))} + \| \nabla V \|_{\rL^p(\R_+; \rL^\infty(\Omega))} \leq C \varepsilon,
    \end{equation}
    due to the embedding  $\E^v_1 \hookrightarrow \rL^\infty(\R_+;\rH^{1,q}(\Omega))$. Recalling  that 
    \begin{equation*}
        \mathrm{e}^{\eta t}F_1 = - \mathrm{e}^{\eta t}\zeta \divH \bar{V} -\mathrm{e}^{\eta t} (\zeta + \bar{\xi}) \nablaH \bar{V} : \left [\mathrm{Z}^\top  - \mathbb{I}_2 \right ],
    \end{equation*}
    and hence
    \begin{equation*}
        F_{1,\mathrm{avg}} = - \frac{1}{|G|} \int_G \zeta \divH \bar{V} \d (x,y) - \frac{1}{|G|} \int_G  (\zeta + \bar{\xi}) \nablaH \bar{V} : \left [\mathrm{Z}^\top  - \mathbb{I}_2 \right ] \d (x,y),
    \end{equation*}
we obtain
    \begin{align*}
        \|  \mathrm{e}^{\eta t}F_1 \|_{\rL^p(\R_+;\rH^{1,q}(G))} 
        &\leq \|  (\zeta + \bar{\xi}) \|_{\rL^\infty(\R_+; \rH^{1,q}(G))} \| \mathrm{Z}^\top - \mathbb{I}_2 \|_{\rL^\infty(\R_+; \rH^{1,q}(G))} \| \mathrm{e}^{\eta(\cdot)}\nablaH \bar{V} \||_{\rL^p(\R_+; \rH^{1,q}(G))} \\ & \quad + \| \zeta \|_{\rL^\infty(\R_+; \rH^{1,q}(G))} \| \mathrm{e}^{\eta(\cdot)} \divH \bar{V}\||_{\rL^p(\R_+; \rH^{1,q}(G))} \\
        &\leq C \varepsilon^2.
    \end{align*}
Next, we estimate the second term of $F_{1,\mathrm{avg}}$ as 
    \begin{align*}
      &\big \| \int_G  (\zeta + \bar{\xi}) \nablaH \bar{V} : \left [\mathrm{Z}^\top  - \mathbb{I}_2 \right ] \big \|_{\rL^1(\R_+)} \\
      &\le \| (\zeta + \bar{\xi}) \|_{\rL^\infty(\R_+ \times G)} \|\mathrm{Z}^\top  - \mathbb{I}_2 \|_{\rL^\infty(\R_+ \times G)} \int_0^\infty \int_G \nablaH \bar{V} \d (x,y) \d t \\
      &\le C  \|  (\zeta+ \bar{\xi}) \|_{\rL^\infty(\R_+; \rH^{1,q}(G))} \| \mathrm{Z}^\top - \mathbb{I}_2 \|_{\rL^\infty(\R_+; \rH^{1,q}(G))}
        \int_0^\infty \mathrm{e}^{-\eta t} \mathrm{e}^{\eta t} \| \nablaH \bar{V} \|_{\rL^q(G)} \d t \\
        &\le C \varepsilon ( \int_0^\infty \mathrm{e}^{-p' \eta t } \d t)^{\nicefrac{1}{p'}} \| \mathrm{e}^{\eta (\cdot)} \nablaH \bar{V} \|_{\rLp(\R_+;\rL^q(G))} \\
        &\le C \varepsilon^2.
    \end{align*}
 The first  term of $F_{1,\mathrm{avg}}$ can be estimated similarly. The same is true for $F_2$ since all nonlinear terms are at least quadratic in terms of $V$, $\zeta$ or $ \mathrm{Z}- \mathbb{I}_2$. We only show this in detail for the third term of the first line of $F_2$ and obtain  
 \begin{equation*}
        \begin{aligned}
          \Big \| \mathrm{e}^{\eta (\cdot )} \big (\sum_{l,j,k} \mathrm{Z}_{l,k} \frac{\partial V_i}{\partial y_k} \frac{\partial \mathrm{Z}_{k,j}}{\partial y_l} \big ) \Big \|_{\rLp(\R_+;\rLq(\Omega))}
          &=   \big \| \mathrm{e}^{\eta (\cdot )} \big (\sum_{l,j,k} \mathrm{Z}_{l,k} \frac{\partial V_i}{\partial y_k} \frac{\partial \left (\mathrm{Z}_{k,j}-\delta_{k,j} \right )}{\partial y_l} \big )
        \big \|_{\rLp(\R_+;\rLq(\Omega))} \\
            &\leq C\| \mathrm{Z} \|_{\rL^\infty (\R_+;\rL^{\infty}(G))}  \| \mathrm{e}^{\eta (\cdot )}  V\|_{\E^v_{1,\infty}}  \left \| \mathrm{Z}- \mathbb{I}_2 \right \|_{\rL^\infty (\R_+;\rH^{1,q}(G))} \\
            &\leq C \varepsilon^2. 
        \end{aligned}
    \end{equation*}
 Hence, 
    \begin{equation*}
        \| \mathrm{e}^{\eta(\cdot)}F_2 \|_{\rLp(\R_+;\rLq(\Omega))} \leq C \varepsilon^2.
    \end{equation*}
\end{proof}

We are finally  in position to give a proof of  \autoref{thm:globalWP}.

\begin{proof}[Proof of \autoref{thm:globalWP}]
Let $\mathbb{B}_\varepsilon$ be given as in \eqref{def:ball}.  For $(\zeta_1, V_1) \in  \mathbb{B}_\varepsilon$  denote by $(\zeta,V) \coloneqq \Psi(\zeta_1, V_1) \in\E_{1,\infty}$ the unique solution of
the linearized equation \eqref{eq:CPEatmostransformed} with right-hand sides $F_1(\zeta_1, V_1)$ and  $F_2(\zeta_1, V_1)$ guaranteed by  \autoref{prop:estnonlin}. We will  show
that $\Psi$ is a selfmap and a contraction. Choosing the initial data small enough, i.\ e.,
\begin{equation*}
    \| (\xi_0-\bar{\xi},v_0) \|_{\rX_\gamma} \leq \frac{\varepsilon}{2C},
\end{equation*}
where $C>0$ denotes the constant given in \autoref{prop:CPEatmos}. The estimates  \eqref{eq:maxregest}  and \autoref{prop:estnonlin} yield
 \begin{equation*}
        \begin{aligned}
            \|  \mathrm{e}^{\eta(\cdot)} ( \zeta_m , V) \|_{\E_{1,\infty} } &+ \| \zeta_{\mathrm{avg}} \|_{\rL^\infty(\R_+)} + \|   \mathrm{e}^{\eta(\cdot)} \dt( \zeta_{\mathrm{avg}}) \|_{\rL^\infty(\R_+)} \\
            &\leq C \big( \| (\xi_0-\bar{\xi},v_0) \|_{\rX_\gamma} 
            + \|\mathrm{e}^{\eta(\cdot)} ( F_1, F_2) \|_{\E_{0,\infty} } + \|F_{1,\mathrm{avg}} \|_{\rL^\infty(\R_+)}  \big ) \leq \frac{\varepsilon}{2} + C\varepsilon^2.
        \end{aligned}
    \end{equation*}
We conclude that for $\varepsilon>0$ small enough, $\Psi$ is a self-map, i.\, e., $\Psi \colon \mathbb{B}_\varepsilon \rightarrow \mathbb{B}_\varepsilon$. The contraction property follows in the same way. 

Finally, since $\varepsilon$ may be chosen to be sufficiently small, the transform $\rX(t,\cdot )$ is a $C^1$-diffeomorphism from $G$ into $G$ and we set $\mathrm{Y}(t,\cdot) = \rX^{-1}(t,\cdot)$. Defining 
\begin{equation*}
    \begin{aligned}
        \xi( t,x,y) := \zeta( t,\mathrm{Y} (t,x,y)) \quad \text{ and } \quad  v (t,x,y,z) &:= V (t,\mathrm{Y}(t,x,y),z)
    \end{aligned}
\end{equation*}
we verify that $(\xi,v)$ is a global, strong solution to system \eqref{eq:compressibleprimitivegravity2} satisfying  $(\xi,v) \in \E_{1,\infty}$.
\end{proof}

\begin{proof}[Proof of \autoref{thm:generalpressure}]
We will not provide a detailed proof of \autoref{thm:generalpressure} as its strategy  is similar to the one described in the proofs of \autoref{thm:localwpCAO} and \autoref{thm:globalWP}.
A short  discussion of the  main differences is nevertheless in order.  The hydrostatic Lagrange transformation yields the operator
$A = \frac{\mu}{\xi_0}\Delta + \frac{\mu'}{\xi_0}\nablaH \divH$ on $\rLq(\Omega;\R^2)$ and we show as in  \autoref{sec:lintheo} that $-A+\omega$ admits a
bounded $\mathcal{H}^\infty$-calculus on $\rL^q(\Omega;\R^2)$
of $\mathcal{H}^\infty$-angle strictly less than $\pi/2$.  The estimates on the nonlinear terms differ only by the pressure term. For the local well-posedness we estimate  the transformed pressure
$P'(\zeta)  (\mathrm{Z}^\top \nablaH\zeta)$ as in the proof of \autoref{lem:nonlinearestimates}.

To deduce the global well-posedness result for small data we rewrite the transformed pressure term as
\begin{equation*}
    P'(\zeta+ \bar{\xi})  (\mathrm{Z}^\top \nablaH\zeta) = \nablaH ( P(\zeta) ) + P'(\zeta+ \bar{\xi}) \sum_j \frac{\partial \zeta}{\partial y_j}(\mathrm{Z}_{j,i} - \delta_{j,i}).
\end{equation*}
Setting $\Tilde{\zeta} = P(\zeta)$, we solve for $\Tilde{\zeta}$ similarly  as in \autoref{thm:globalWP}. The pressure law \eqref{eq:pressurelaw} ensures then the invertibility of $P$
and hence for proving the assertion  it is sufficient to solve the resulting system for $\Tilde{\zeta}$. %This concludes the main ideas of the proof of \autoref{thm:generalpressure}.
\end{proof}

\medskip 

{\bf Acknowledgements}
 \small{This research is supported by the Basque Government through the BERC 2022-2025 program and by the Spanish State Research Agency through BCAM Severo Ochoa excellence accreditation CEX2021-01142-S funded by MICIU/AEI/10.13039/501100011033 and through Grant PID2023-146764NB-I00 funded by MICIU/AEI/10.13039/501100011033 and cofunded by the European Union. Matthias Hieber acknowledges the support by DFG project FOR~5528. Arnab Roy would like to thank the Alexander von Humboldt-Foundation, Grant RYC2022-036183-I funded by MICIU/AEI/10.13039/501100011033 and by ESF+. Yoshiki Iida was supported by JSPS KAKENHI Grant Number JP24KJ2080, and partially supported by Top Global University Project of Waseda University.
He is also grateful to Fachbereich Mathematik, TU Darmstadt for its hospitality while visiting Professor Matthias Hieber.}


\begin{thebibliography}{99}
\bibitem{ABHN:11} W. Arendt, Ch. Batty, M. Hieber, F. Neubrander,
{\it Vector-Valued  Laplace Transforms and Cauchy Problems}. Monographs in Mathematics, vol. 96, $2^{nd}$-edition, Birkh\"auser, Basel, 2011. 
  
			
\bibitem{Ama:95}
H.~Amann,
{\it Linear and Quasilinear Parabolic Problems}. Monographs in Mathematics, vol.~89, Birkh\"auser, 1995.
			
\bibitem{Ama:19}
H.~Amann,
{\it Linear and Quasilinear Parabolic Problems. Vol.~II. Function Spaces}. Monographs in Mathematics, vol.~106, Birkh\"auser/Springer, Cham, 2019.

	
			
\bibitem{CT:07}
C.~Cao, E.S.~Titi, 
Global well-posedness of the three-dimensional viscous primitive equations of large scale ocean and atmosphere dynamics.
{\it Ann. of Math.} {\bf 166} (2007), 245--267. 
			
			

            \bibitem{DDHPV:04}
            R.~Denk, G.~Dore, M.~Hieber, J.~Pr\"uss, A.~Venni,
            New thoughts on old results of R. T. Seeley.
            {\it Math. Ann.} {\bf 328} (2004), 545--583.
            
	    \bibitem{DHP:03} 
            R.~Denk, M.~Hieber, J.~Pr\"uss,
            {\it $\mathcal{R}$-Boundedness, Fourier Multipliers and Problems of Elliptic and Parabolic Type}. 
            Mem. Amer. Math. Soc. {\bf 788} 2003.
			
	   \bibitem{DHP:07}
	    R.~Denk, M.~Hieber, J.~Pr\"uss, 
	  Optimal $\rLp$-$\rLq$-estimates for parabolic boundary value problems with inhomogeneous data.
	{\it Math. Z.} {\bf 257} (2007), 193--224.


      \bibitem{Dan:14}
        R. Danchin,
        A Lagrangian approach for the compressible Navier-Stokes equations.
        {\it  Ann. de l'Institut Fourier } {\bf 64} (2014), 753-791. 


      \bibitem{Dan:18}
        R. Danchin,
        Fourier analysis methods for the compressible Navier-Stokes equations.
        {\it Handbook of Math. Anal. in Mechanics of Viscous Fluids}, Y. Giga, A. Novotny (eds.), Springer, 2018, 1843--1903. 

        
      \bibitem{DM:23}
        R. Danchin, P. Mucha,
        Compressible Navier-Stokes equations with ripped density.
        {\it  Comm. Pure Appl. Math.} {\bf 76} (2023), 3437--3492.

        \bibitem{DN:13}
        R.~Denk, T.~Nau,
        Discrete Fourier multipliers and cylindrical boundary-value problems.
{\it Proc. Roy. Soc. Edinburgh} {\bf 143} (2013), 1163--1183.

\bibitem{ES:18}
Y.~Enomoto, Y.~Shibata, 
Global Existence of Classical Solutions and Optimal Decay Rate for Compressible Flows via the Theory of Semigroups. 
{\it Handbook of Math. Anal. in Mechanics of Viscous Fluids}, Y. Giga, A. Novotny (eds.), Springer, 2018, 2085--2181.

\bibitem{EN:12} M. Ersoy, T. Ngom, 
  Existence of a global weak solution to compressible primitive equations.
  {\it C.R. Acad Sci.} {\bf 350}, (2012), 379--382.

\bibitem{ENS:11} M. Ersoy, T. Ngom, M. Sy,
Compressible primitive equations: formal derivation and stability of weak solutions.
  {\it Nonlinearity} {\bf 24}, (2011), 79--96.

\bibitem{EF01}  
E. Feireisl, A. Novotn\'y{} and H. Petzeltov\'a, On the existence of globally defined weak solutions to the Navier-Stokes equations, {\it J. Math. Fluid Mech.} {\bf 3} (2001), no.~4, 358--392.

\bibitem{GGHHK:17}
Y.~Giga, M.~Gries, M.~Hieber, A.~Hussein, T.~Kashiwabara, 
Bounded $\Hinfty-$calculus for the hydrostatic Stokes operator on $\rL^p-$spaces and applications.
{\it Proc. Amer. Math. Soc.} {\bf 145} (2017), 3865--3876.
			
\bibitem{GGHHK:20b}
Y.~Giga, M.~Gries, M.~Hieber, A.~Hussein, T.~Kashiwabara,
Analyticity of solutions to the primitive equations.
{\it Math. Nachr.} {\bf 293} (2020), 284--304.
		
            \bibitem{HH:05}
            R.~Haller-Dintelmann, M.~Hieber,
            $\mathcal{H}^\infty$-calculus for products of non-commuting operators.
            {\it Math. Z.} {\bf 251} (2005), 85--100.
   
			\bibitem{HK:16}
			M.~Hieber, T.~Kashiwabara,
			Global strong well-posedness of the three dimensional primitive equations in $\rL^p$-spaces.
			{\it Arch. Rational Mech. Anal.} {\bf 221} (2016), 1077--1115. 
		

              


                      \bibitem{Hof:02}
                        D. Hoff,
                        Dynamics of singularity surfaces for compressible, viscous flows in two space dimensions.
                        {\it Comm. Pure Appl. Math.} {\bf 55} (2002), 1365--1407.



                      
			\bibitem{HNVW:16} 
			T.~Hyt\"onen, J.~van Neerven, M.~Veraar, L.~Weis,
			{\it Analysis in Banach spaces, vol.~II}, Springer, Cham, 2016.
			
		
			
                         \bibitem{KZ:07a}
                         I.~Kukavica, M.~Ziane,
                         On the regularity of the primitive equations of the ocean.
                         {\it Nonlinearity} {\bf 20} (2007), 2739--2753.
			
		
			
\bibitem{LTW:92a}
            J.-L.~Lions, R.~Temam, S.H.~Wang,
            New formulations of the primitive equations of atmosphere and applications.
            {\it Nonlinearity} {\bf 5} (1992), 237--288.

            \bibitem{LTW:92b}
            J.-L.~Lions, R.~Temam, S.H.~Wang,
            On the equations of the large-scale ocean.
            {\it Nonlinearity} {\bf 5} (1992), 1007--1053.
			
            %\bibitem{LTW:93}
            %J.L.~Lions, R.~Temam, S.H.~Wang,
            %Models for the coupled atmosphere and ocean. (CAO I,II).
            %{\it Comput. Mech. Adv.} {\bf 1} (1993), 3--119
   
\bibitem{LTW:95}
J.-L.~Lions, R.~Temam, S.H.~Wang,
Mathematical theory for the coupled atmosphere-ocean models. (CAO III).
{\it J. Math. Pures Appl.~(9)}~{\bf 74} (1995), 105-163. 

\bibitem{PLL98}
P.-L. Lions, Mathematical topics in fluid mechanics. Vol. 2, {\it Oxford Lecture Series in Mathematics and its Applications, Oxford Science Publications}, 10 , Oxford Univ. Press, New York, 1998.
             

\bibitem{LT:weak}
X.~Liu, E.~Titi,
Global existence of weak solutions to the compressible primitive equations of atmospheric dynamics with degenerate viscosities.             
{\it SIAM J. Math. Anal.} {\bf 51} (2019), 1913--1964.  

\bibitem{LT:20}
X.~Liu, E.~Titi,
Zero Mach number limit of the compressible primitive equations part I: Well-prepared initial data.
{\it Arch. Rational Mech. Anal.} {\bf 238} (2020), 705--747. 


\bibitem{LT:21}
X.~Liu, E.~Titi,
Local well-posedness of strong solutions to the three-dimensional compressible primitive equations,
{\it Arch. Rational Mech. Anal.} {\bf 241} (2021), 729--764. 

\bibitem{MN:83}
A.~Matsumura, T.~Nishida,
Initial boundary value problems for the equations of motion of compressible viscous and heat-conductive fluids,
{\it Comm. Math. Phys.} {\bf  89} (1983), 445--464.

			\bibitem{N:12}
            T.~Nau,
            {\it $\rLp$-Theory of Cylindrical Boundary Value Problems}.
            Springer Spektrum, Wiesbaden, 2012, xvi+188 pp.
            
            
				
			\bibitem{PS:16} 
			J.~Pr\"uss, G.~Simonett,
			{\it Moving Interfaces and Quasilinear Parabolic Evolution Equations}. Monographs in Mathematics, vol. 105, Birkh\"auser, 2016.
			
	
			
	
			
			\bibitem{Tri:78}
			H.~Triebel,
			{\it Interpolation Theory, Function Spaces, Differential Operators}. North-Holland, 1978.
			
%			\bibitem{Tri:92}
%			H.~Triebel,
%			{\it Theory of Function Spaces II}. Birkh\"auser Verlag, Basel, 1992.

         
\end{thebibliography}
\end{document}